\newcommand{\N}{\mathbb{N}}
\newcommand{\Z}{\mathbb{Z}}
\newcommand{\R}{\mathbb{R}}
\newcommand{\C}{\mathbb{C}}
\newcommand{\E}{\mathbb{E}}
\newcommand{\KG}{\mathcal{K}(G,\Gamma_{-1}(\mathcal{H}))}
\newcommand{\prodalpha}{\G\rtimes_\alpha G}
\newcommand{\G}{\Gamma_{-1}(\mathcal{H})}
\newcommand{\h}{\mathcal{H}}
\newtheorem{theorem}{Theorem}[section]
\newtheorem{definition}[theorem]{Definition}
\newtheorem{prop}[theorem]{Proposition}
\newtheorem{rmq}[theorem]{Remark}
\newtheorem{lemme}[theorem]{Lemma}
\newtheorem*{theo}{Theorem}
\begin{document}
\begin{center}
\begin{huge}
\bfseries Dilation properties of measurable Schur multipliers and Fourier multipliers
\end{huge}\\[0.7cm]
\begin{Large}
\bfseries Charles Duquet\footnote{ Laboratoire de Besançon, Université de Franche-Comté, Besançon, France, charles.duquet@univ-fcomte.fr
\textit{Key words}: dilation; Schur multipliers; Fourier multipliers; Completely positive maps

\textit{2020 Mathematics Subject Classification}: Primary: 47A20 Secondary : 43A22, 47L65, 47B65
}
\end{Large}
\end{center}

\begin{Large}Abstract:\end{Large} In the article, we find new dilatation results on non-commutative $L^p$ spaces. We prove that any self-adjoint, unital, positive measurable Schur multiplier on some $B(L^2(\Sigma))$ admits, for all $1\leq p<\infty$, an invertible isometric dilation on some non-commutative $L^p$-space.
We obtain a similar result for self-adjoint, unital, completely positive Fourier multiplier on $VN(G)$, when $G$ is a unimodular locally compact group. Furthermore, we establish multivariable versions of these results.

\tableofcontents

\section{Introduction}
A famous theorem of Akcoglu \cite{Akcoglu} asserts that positive contractions on classical $L^p$-spaces, $1<p<\infty$, admit an isometric dilation, as follows:
for any measure space $(\Sigma,\mu)$, for $1<p<\infty$ and for any positive contraction $T:L^p(\Sigma)\to L^p(\Sigma)$, there exist a measure space $(\Sigma',\mu')$, two contractions $J:L^p(\Sigma)\to L^p(\Sigma')$ and $Q:L^p(\Sigma')\to L^p(\Sigma)$ and an inversible isometry $U:L^p(\Sigma')\to L^p(\Sigma')$ such that  $T^k=QU^kJ$ for all integer $k\geq 0$.

A natural question is to extend such a dilation property on non-commutative $L^p$-spaces associated with a semi-finite von Neumann algebra. In the sequel we call tracial von Neumann algebra any pair $(N,\tau)$, where $N$ is a (semi-finite) von Neumann algebra and $\tau$ is a normal semi-finite faithful trace (n.s.f in short) on $N$. In this framework, the appropriate notion of dilation is the following.

\begin{definition}
Let $(N,\tau)$ be a tracial von Neumann algebra and let $1\leq p<\infty$.
We say that an operator $T:L^p(N,\tau)\to L^p(N,\tau)$ is dilatable if there exist a tracial von Neumann algebra $(N',\tau')$, two contractions $J:L^p(N,\tau)\to L^p(N',\tau')$, $Q: L^p(N',\tau')\to L^p(N,\tau)$ and an invertible isometry $U:L^p(N',\tau')\to L^p(N',\tau')$, such that for all $k\geq 0$, $T^k=QU^kJ$.
\begin{align*}
\begin{array}{ccc}
L^p(N',\tau')&\overset{U^k}{\longrightarrow}&L^p(N',\tau')\\
J\uparrow& & \downarrow Q\\
L^p(N,\tau)&\overset{T^k}{\longrightarrow}&L^p(N,\tau)
\end{array}
\end{align*}
\end{definition}

The first result to mention is that not all positive contractions on non-commutative $L^p$-spaces are dilatable. Indeed for any $1<p\neq 2<\infty$, it has been proven in \cite{JLM} that there exists a completely positive contraction on the Schatten class $S^p$ which is not dilatable. On the opposite direction, C. Arhancet exhibited 
in \cite{A1} remarkable families of dilatable operators. The goal of this
paper is to present new classes of dilatable operators which 
extend Arhancet's results. 

We will be interested in operators which act on 
non-commutative $L^p$-spaces for all $1\leq p\leq\infty$.
Let us start with some background and definitions.
We say that a positive operator 
$T: (N,\tau)\to (N',\tau')$ between two tracial von Neumann algebras is trace preserving if for all $x\in N_+\cap L^1(N,\tau)$, we have $\tau'(T(x))=\tau(x)$. The following is well-known (see e.g. \cite[Lemma 1.1]{JX}).

\begin{lemme}\label{lem superdilatation}
Let $(N,\tau), (N',\tau')$ be two tracial von Neumann algebras
and let $T:N\to N'$ be a  positive trace preserving contraction. 
Then for all $1\leq p<\infty$, there exists a necessarily unique contraction 
$T_p: L^p(N,\tau)\to L^p(N',\tau')$ such that for all $x\in N\cap L^p(N,\tau)$, $T(x)=T_p(x)$.

If further $T$ is a one-to-one $\ast$-homomorphism, then $T_p$ is an isometry.
\end{lemme}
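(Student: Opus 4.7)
The plan is to build the extensions $T_p$ at the endpoint indices $p=\infty$ and $p=1$ first, interpolate for intermediate $p$, and finally strengthen the construction into an isometry when $T$ is a $\star$-homomorphism. The case $p=\infty$ requires no argument: $T$ itself is a contraction on $N$, so I simply set $T_\infty:=T$.

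The heart of the matter is the $p=1$ case, and I would work on the dense subspace $N\cap L^1(N,\tau)\subset L^1(N,\tau)$. For $x\in N_+\cap L^1(N,\tau)$, positivity gives $T(x)\in N'_+$, and trace-preservation upgrades this to $T(x)\in L^1(N',\tau')$ with $\|T(x)\|_1=\tau'(T(x))=\tau(x)=\|x\|_1$. For self-adjoint $x=x_+-x_-$, the \emph{minimality of the Jordan decomposition} applied to $T(x)=T(x_+)-T(x_-)$ (a decomposition into positives) forces $|T(x)|\leq T(x_+)+T(x_-)=T(|x|)$, hence $\|T(x)\|_1\leq\tau'(T(|x|))=\|x\|_1$. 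Extending to complex $x\in N\cap L^1(N,\tau)$ via the real/imaginary split gives only the crude bound $2\|x\|_1$; recovering the sharp constant $1$ is the delicate point and what I expect to be the main obstacle. A standard way out is to dualize: using the pairing $L^1(N,\tau)\times N\to\C$, $(a,b)\mapsto\tau(ab)$, one identifies $T_1$ as the adjoint of a map whose $N$-norm is controlled by the trace-preservation relation $\tau'\circ T=\tau$, routing the $L^1$-contractivity through the already-known $N$-contractivity of $T$. A density argument then produces $T_1\colon L^1(N,\tau)\to L^1(N',\tau')$.

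For $1<p<\infty$, I would invoke complex interpolation for non-commutative $L^p$-spaces (e.g.\ Kosaki's theorem, $[N,L^1(N,\tau)]_{1/p}=L^p(N,\tau)$) applied to the pair $(T_\infty,T_1)$, producing a contraction $T_p$ that agrees with $T$ on $N\cap L^p(N,\tau)$; uniqueness at every $p$ is immediate from density. For the isometry statement, assume in addition that $T$ is an injective $\star$-homomorphism. Then $T$ is isometric on $N$ and commutes with continuous functional calculus, giving $|T(x)|^p=T(|x|^p)$ for every $x\in N\cap L^p(N,\tau)$. Trace-preservation then yields
\[
\|T_p(x)\|_p^p=\tau'\bigl(|T(x)|^p\bigr)=\tau'\bigl(T(|x|^p)\bigr)=\tau(|x|^p)=\|x\|_p^p,
\]
and density promotes $T_p$ to an isometry on all of $L^p(N,\tau)$.
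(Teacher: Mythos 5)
The paper itself does not prove this lemma; it cites it as well known from \cite{A1}, Lemma 4.1, so your proposal can only be judged on its own terms. Your architecture (endpoints $p=\infty$ and $p=1$, complex interpolation in between, functional calculus for the isometry claim) is the standard one, and the interpolation and isometry parts are fine. The problem is that the step you yourself single out as ``the delicate point'' --- $L^1$-contractivity with constant $1$ for non-self-adjoint elements --- is essentially the whole content of the lemma, and your proposal does not close it. Saying one should ``dualize'' and that $T_1$ is ``the adjoint of a map whose $N$-norm is controlled by trace preservation'' is a correct slogan, not a proof: the relevant map is not $T$ (whose $N$-contractivity you propose to route through), nor its Banach-space adjoint, but a trace-adjoint $S\colon N'\to N$ determined by $\tau(S(y)x)=\tau'(T(x)y)$ for $x\in N\cap L^1(N,\tau)$ and $y\in N'$. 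Constructing $S$ is the real work: for $0\le y\le 1$ one checks that $x\mapsto \tau'(T(x)y)$ is a positive functional on the ideal $N\cap L^1(N,\tau)$ dominated by $\tau$, applies a Radon--Nikodym theorem to produce $S(y)\in N$ with $0\le S(y)\le 1$, extends by linearity, observes that trace preservation forces $S(1)=1$, and invokes Russo--Dye to get $\Vert S\Vert=\Vert S(1)\Vert=1$; only then does $\Vert T(x)\Vert_1=\sup\{\vert\tau(S(y)x)\vert : \Vert y\Vert\le 1\}\le\Vert x\Vert_1$ follow. Note also that even if $T$ were normal, its pre-adjoint goes the wrong way, namely $L^1(N',\tau')\to L^1(N,\tau)$, so the $L^1$-bound genuinely cannot be obtained from ``the already-known $N$-contractivity of $T$'' as you suggest; it comes from positivity plus unitality of the auxiliary map $S$, which must be built.

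A secondary flaw: the operator inequality $|T(x)|\le T(x_+)+T(x_-)$ that you deduce from ``minimality of the Jordan decomposition'' is false in general; from $a=b-c$ with $b,c\ge 0$ one cannot conclude $a_+\le b$, hence not $|a|\le b+c$ (there are $2\times 2$ counterexamples). What is true, and all you need, is the trace inequality
\begin{equation*}
\tau'\bigl(|T(x)|\bigr)=\tau'\bigl((Tx)_+\bigr)+\tau'\bigl((Tx)_-\bigr)\le \tau'\bigl(T(x_+)\bigr)+\tau'\bigl(T(x_-)\bigr)=\tau\bigl(|x|\bigr),
\end{equation*}
which follows from $\tau'(a_+)=\tau'(pap)\le\tau'(pbp)\le\tau'(b)$, with $p$ the support projection of $a_+$ and $a=b-c$, $b,c\ge0$. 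So your self-adjoint case survives, but not for the reason written; and in any event the passage to general $x$ with constant $1$ remains the unproved core of the statement.
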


Assume that $J: (N,\tau)\to (N',\tau')$ is a one-to-one 
trace preserving $\ast$-homomorphism. Let $J_1 :L^1(N,\tau)\to L^1(N',\tau')$
be induced by $J$, according to Lemma \ref{lem superdilatation}. It
is well-known that $J_1^* :
N'\to N$ is a conditional expectation. In the sequel, we call it the conditional 
expectation associated with $J$.

\begin{definition} \label{rmq superdilatation}
We say that an operator
$T:(N,\tau)\to (N,\tau)$ is absolutely dilatable if there exist a
tracial von Neumann algebra $(N',\tau')$, a trace preserving one-to-one $\ast$-homomorphism $J:(N,\tau)\to (N',\tau')$ and a  trace preserving $\ast$-automorphism $U:(N',\tau')\to (N',\tau')$ such that
\begin{equation}\label{DilPpty}
T^k=\E U^kJ,\qquad k\geq 0,
\end{equation}
where $\mathbb{E}:N'\to N$ is the  the conditional 
expectation associated with $J$.
\end{definition}

A absolutely dilation is trace preserving. Indeed with the above notation, (\ref{DilPpty}) with $k=1$ yields $T=\mathbb{E}UJ$.

If $T$ satisfies Definition \ref{rmq superdilatation}, then applying Lemma 
\ref{lem superdilatation}
we obtain that for all $1\leq p<\infty$, 
$J$ (resp. $\mathbb{E}$) induces  a contraction
$L^p(N,\tau)\to L^p(N',\tau')$ (resp. $L^p(N',\tau')\to L^p(N,\tau)$)
and that $U$ induces an invertible isometry $L^p(N',\tau')\to L^p(N',\tau')$.
Moreover (\ref{DilPpty}) holds true on $L^p$-spaces.
We therefore obtain the following lemma.

\begin{lemme}\label{lem absdila implique dila p}
If $T:(N,\tau)\to (N,\tau)$ is absolutely dilatable, then for every $1\leq p <\infty$,
$T$ induces a contraction $T_p:L^p(N,\tau)\to L^p(N,\tau)$
and $T_p$ is dilatable.
\end{lemme}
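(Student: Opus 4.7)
Fix $1\leq p<\infty$ and let $(N',\tau')$, $J$, $U$ and $\mathbb{E}$ be as in Definition \ref{rmq superdilatation}. The plan is to apply Lemma \ref{lem superdilatation} separately to each of the maps $T$, $J$, $\mathbb{E}$ and $U$ to produce $L^p$-extensions, then transport the dilation identity $T^k=\mathbb{E}U^kJ$ from $N$ to $L^p(N,\tau)$ using the density of $N\cap L^p(N,\tau)$ in $L^p(N,\tau)$.

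First I would deal with the extensions themselves. The remark preceding the statement shows that $T=\mathbb{E}UJ$, so $T$ is a composition of positive trace preserving contractions on $N$ and is therefore itself a positive trace preserving contraction; Lemma \ref{lem superdilatation} then provides a contraction $T_p$ on $L^p(N,\tau)$. The same lemma, applied to $J$, gives a contraction $J_p:L^p(N,\tau)\to L^p(N',\tau')$ which is in fact an isometry because $J$ is a one-to-one $\star$-homomorphism. The conditional expectation $\mathbb{E}$ associated with $J$ is positive and trace preserving (a standard property of $J_1^*$), so it extends to a contraction $\mathbb{E}_p:L^p(N',\tau')\to L^p(N,\tau)$.

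Next I would verify that $U$ extends to an invertible isometry on $L^p(N',\tau')$. Both $U$ and $U^{-1}$ are one-to-one trace preserving $\star$-homomorphisms, so Lemma \ref{lem superdilatation} yields isometries $U_p$ and $(U^{-1})_p$ on $L^p(N',\tau')$. The compositions $U_p(U^{-1})_p$ and $(U^{-1})_pU_p$ agree with the identity on the dense subspace $N'\cap L^p(N',\tau')$, so by the uniqueness clause of Lemma \ref{lem superdilatation} they coincide with $\mathrm{Id}_{L^p(N',\tau')}$. Hence $U_p$ is an invertible isometry with $U_p^{-1}=(U^{-1})_p$.

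Finally I would transfer the dilation identity. For each $k\geq 0$ and each $x\in N\cap L^p(N,\tau)$ one has $T^k(x)=\mathbb{E}U^kJ(x)$ in $N$, and both sides also lie in $L^p$. By the same density-plus-uniqueness argument, the $L^p$-extension of a composition of positive trace preserving contractions coincides with the composition of the corresponding $L^p$-extensions, so $T_p^k=\mathbb{E}_pU_p^kJ_p$ holds on all of $L^p(N,\tau)$. This is exactly the definition of dilatability for $T_p$, with dilation space $L^p(N',\tau')$, injection $J_p$, projection $\mathbb{E}_p$ and invertible isometry $U_p$. The only subtle point, which is where one must take a little care, is precisely the verification that the $L^p$-extension of the composition is the composition of the $L^p$-extensions; but this is a routine consequence of the uniqueness assertion in Lemma \ref{lem superdilatation}.
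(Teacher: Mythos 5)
Your proposal is correct and follows essentially the same route as the paper, which dispatches this lemma in the paragraph preceding its statement by applying Lemma \ref{lem superdilatation} to $J$, $U$, $U^{-1}$ and $\mathbb{E}$ and then extending the identity $T^k=\mathbb{E}U^kJ$ to $L^p$ by density. Your write-up merely makes explicit the points the paper leaves implicit (that $T=\mathbb{E}UJ$ is itself a positive trace preserving contraction, and that $U_p^{-1}=(U^{-1})_p$ via the uniqueness clause), all of which are sound.
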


In the article \cite{A1}, C. Arhancet proved that any 
self-adjoint unital positive Schur multiplier $B(l^2_{\Z})\to B(l^2_{\Z})$ is absolutely dilatable
and that whenever $G$ is discrete group,
any self-adjoint unital completely positive Fourier multiplier 
$VN(G)\to VN(G)$ is absolutely dilatable. 
Our main results are generalisations of these results. On the one hand (Theorem 
\textbf{A}), we will consider measurable Schur multipliers. On the
other hand (Theorem \textbf{B}), we will consider 
Fourier multipliers on unimodular groups.

Let $\Sigma$ be a $\sigma$-finite measure space with respect to a
measure simply denoted by $dt$. Let $S^2(L^2(\Sigma))$ be the space
of all Hilbert-Schmidt operators on $L^2(\Sigma)$. 
We recall that for any
$f\in L^2(\Sigma^2)$, we may define a bounded operator
\begin{equation}\label{Sf}
S_f : L^2(\Sigma^2)\longrightarrow L^2(\Sigma^2),
\qquad S_f(h) =\int_\Sigma f(\cdot,t)h(t)dt,
\end{equation}
that $S_f$ is a Hilbert-Schmidt operator, and that the mapping $f\mapsto S_f$
is a unitary from $L^2(\Sigma^2)$ onto $S^2(L^2(\Sigma))$.

For any $\varphi\in L^\infty(\Sigma^2)$, we denote by $M_\varphi: S^2(L^2(\Sigma))\to S^2(L^2(\Sigma))$ the operator defined by
\begin{equation}\label{Schur}
M_\varphi(S_f) = S_{\varphi f},\qquad f\in L^2(\Sigma^2).
\end{equation}
We say that $M_\varphi$ (or simply $\varphi$) 
is a (measurable) Schur multiplier 
on $B(L^2(\Sigma))$, if there exists $C> 0$ such that
\begin{equation}\label{Bounded}
\|M_\varphi(S)\|_{B(L^2(\Sigma))}
\leq C\|S\|_{B(L^2(\Sigma))},
\qquad S\in S^2(L^2(\Sigma)).
\end{equation}

Our first main result is the following.

\begin{theo}[\textbf{A}]
Let $\Sigma$ be a $\sigma$-finite measure space. Let $\varphi\in L^\infty(\Sigma^2)$ and assume that $M_\varphi$ is a self-adjoint, unital, positive Schur multiplier on $B(L^2(\Sigma))$.
Then $M_\varphi$ is absolutely dilatable.
\end{theo}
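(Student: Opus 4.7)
The plan is to first reduce to a convenient structural form for $\varphi$. I would invoke the measurable analogue of the standard characterization of positive Schur multipliers to extract a real Hilbert space $\mathcal{H}$ and a measurable map $\alpha : \Sigma \to \mathcal{H}$ with
\[
\varphi(s,t) = \langle \alpha(s), \alpha(t)\rangle_{\mathcal{H}} \quad \text{for a.e. } (s,t) \in \Sigma^2.
\]
The self-adjointness of $M_\varphi$ justifies working over $\R$, and the unital assumption forces $\|\alpha(s)\| = 1$ for almost every $s$.

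Next I would build the dilating algebra using the fermion ($q=-1$) second quantization. Let $\mathcal{K} := \ell^2(\mathbb{Z}) \otimes \mathcal{H}$, equipped with the bilateral shift $S(\delta_n \otimes h) := \delta_{n+1}\otimes h$, so that $\Gamma_{-1}(S)$ is a trace-preserving $\star$-automorphism of $\Gamma_{-1}(\mathcal{K})$. Put $N' := B(L^2(\Sigma))\,\bar\otimes\,\Gamma_{-1}(\mathcal{K})$ equipped with the tensor-product trace, and take $J(x) := x \otimes 1$, a normal, trace-preserving, unital, injective $\star$-homomorphism whose associated conditional expectation is $\mathbb{E} = \mathrm{id} \otimes \tau_{-1}$. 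Using the measurability of $\alpha$, let $D$ be the self-adjoint unitary in $L^\infty(\Sigma)\,\bar\otimes\,\Gamma_{-1}(\mathcal{K})$ corresponding to the bounded measurable function $s \mapsto s_{-1}(\delta_0 \otimes \alpha(s))$, where $s_{-1}(\cdot)$ denotes the fermion field operator. Define
\[
U(y) := (1 \otimes \Gamma_{-1}(S))(D\, y\, D), \qquad y \in N'.
\]
As the composition of conjugation by a self-adjoint unitary and the trace-preserving $\star$-automorphism $1 \otimes \Gamma_{-1}(S)$, $U$ is itself a trace-preserving $\star$-automorphism of $N'$.

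The final step is to verify $M_\varphi^k = \mathbb{E}\, U^k J$ for every $k \geq 0$. Unfolding the recursion yields
\[
U^k(x \otimes 1) = D^{(1)}D^{(2)}\cdots D^{(k)}\,(x \otimes 1)\,D^{(k)} \cdots D^{(2)} D^{(1)},
\]
where $D^{(n)} := (1 \otimes \Gamma_{-1}(S)^n)(D)$ is the function $s \mapsto s_{-1}(\delta_n \otimes \alpha(s))$. For $x = S_f$ with kernel $f \in L^2(\Sigma^2)$, one reads off the kernel of $U^k(x \otimes 1)$ and sees that applying $\mathbb{E}$ reduces pointwise to computing $\tau_{-1}$ of a product of $2k$ fermion fields. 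By the fermion Wick/Pfaffian formula, every pairing vanishes except the unique nested matching of each $s$-side factor with the $t$-side factor carrying the same $\mathbb{Z}$-index (distinct $\mathbb{Z}$-indices are orthogonal, and $\|\alpha(s)\|=1$ kills same-side pairings at distinct indices); this nested pairing is non-crossing, so it contributes sign $+1$ and value $\langle\alpha(s),\alpha(t)\rangle^k = \varphi(s,t)^k$. Hence $\mathbb{E}\, U^k J(x) = M_{\varphi^k}(x) = M_\varphi^k(x)$, as required.

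I anticipate the two main obstacles to be (i) producing a jointly measurable factorization $\alpha$ from the positivity of $M_\varphi$ with enough regularity to realize $D$ as a bona fide element of $L^\infty(\Sigma)\,\bar\otimes\,\Gamma_{-1}(\mathcal{K})$, and (ii) justifying the fermion Wick computation uniformly in $k$, in particular pinning down the sign of the nested pairing.
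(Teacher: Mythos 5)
Your proposal is correct in substance and lands in the same circle of ideas as the paper (fermionic second quantization, conjugation by a field-operator symmetry, a shift, and a trace computation), but the concrete route is genuinely different. The paper does not start from the factorization $\varphi(s,t)=\langle\alpha(s),\alpha(t)\rangle$: it runs a GNS-type construction directly on $L^1_\R(\Sigma)$ with the positive bilinear form $\int\varphi(s,t)f(s)h(t)\,ds\,dt$, obtains a Hilbert space $\mathbb{H}$ and a map $T(h)=\omega(\dot h)$ which is only an element of $L^\infty_\sigma(\Sigma,\Gamma_{-1}(\mathbb{H}))\simeq L^\infty(\Sigma)\overline{\otimes}\Gamma_{-1}(\mathbb{H})$ rather than a pointwise-defined function, and then dilates inside the infinite tensor product $\overline{\otimes}_\Z\Gamma_{-1}(\mathbb{H})$ with the tensor shift, so that the final trace computation factors position by position and no Wick formula is needed. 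Your construction --- one Fock space over $\ell^2(\Z)\otimes\mathcal{H}$, the second-quantized bilateral shift, and the pair-partition formula with the non-crossing rainbow pairing --- is instead exactly the architecture the paper uses for Fourier multipliers (Theorem \textbf{B}), transplanted to the Schur setting; your sign analysis and the orthogonality argument killing all but the nested pairing are correct. What your route buys is a pointwise-measurable $D$ (since $\alpha$ is Bochner measurable once $\mathcal{H}$ is taken separable), which genuinely simplifies the product manipulations; what it costs is that you must import two non-trivial external facts that the paper either reproves or deliberately avoids: (i) the measurable factorization theorem for positive Schur multipliers (the paper's Section 4 is arranged so as to give an independent proof of it, via the Radon--Nikodym property of $L^2(N)$), and (ii) the statement that unitality forces $\langle\alpha(t),\alpha(t)\rangle=1$ a.e., which is precisely the paper's Theorem \ref{th unital et alpha beta} and is flagged there as apparently new --- you assert it in one line, but it requires a genuine argument. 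Finally, your step ``one reads off the kernel of $U^k(x\otimes 1)$ and applies $\mathbb{E}$ pointwise'' is where most of the paper's technical machinery lives (the $\Gamma$ map of Lemma \ref{prop existence gamma}, the $w^*$-continuity lemmas, and the duality computation at the end of Section 4); your measurable $\alpha$ makes this easier but a density/duality argument is still needed to justify the pointwise trace formula, so your anticipated obstacles (i) and (ii) are the right ones, and both are surmountable.
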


Let $G$ be a unimodular locally compact group. Let $\lambda\colon G\to B(L^2(G))$
be the left regular representation of $G$, that is, 
$\bigl[\lambda(s)f\bigr](t)=f(s^{-1}t)$
for any $f\in L^2(G)$ and $s,t\in G$.
We let $VN(G)=\lambda(G)''\subset B(L^2(G))$ be the 
group von Neumann algebra of $G$.
We denote by 
$$
\omega_G: VN(G)_+\longrightarrow [0;+\infty]
$$
the Plancherel weight on $VN(G)$, which is a trace in the unimodular case.
We say that a $w^*$-continuous operator $T:VN(G)\to VN(G)$ is a Fourier multiplier, if there exists a bounded continuous function 
$u:G\to \C$ such that for all $s\in G$, $T(\lambda(s))=u(s)\lambda(s)$.
In this case, $u$ is necessarily unique, we write $T=T_u$ and
$u$ is called the symbol of $T_u$.
In the unimodular case, 
we say that an operator $T: VN(G)\to VN(G)$ is self-adjoint if 
$$
\omega_G(T(x)y^*)=\omega_G(xT(y)^*),
\qquad x,y\in VN(G)\cap L^1(VN(G),\omega_G).
$$

Our second main result is the following.

\begin{theo}[\textbf{B}]
Let $T_u: VN(G)\to VN(G)$ be a self-adjoint, unital, completely positive Fourier multiplier.
Then $T_u$ is absolutely dilatable.
\end{theo}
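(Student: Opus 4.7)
The plan is to generalize Arhancet's construction from \cite{A1} (which handles discrete $G$) to the unimodular locally compact setting, by exhibiting an explicit absolute dilation inside a Fermionic crossed product $\Gamma_{-1}(\mathcal{H})\rtimes_\alpha G$, as suggested by the notation already announced in the paper's preamble. The first step is to translate the hypotheses on $T_u$ into functional properties of its symbol: unitality forces $u(e)=1$, self-adjointness with respect to $\omega_G$ forces $u$ to be real-valued, and complete positivity of a Fourier multiplier is well known to be equivalent to $u$ being a continuous positive definite function on $G$. Applying the GNS construction to the real positive definite $u$ then produces a real Hilbert space $\mathcal{H}$, a strongly continuous orthogonal representation $\pi\colon G\to O(\mathcal{H})$ and a unit cyclic vector $\xi\in\mathcal{H}$ such that $u(s)=\langle\pi(s)\xi,\xi\rangle$ for every $s\in G$.

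Next I would second-quantize $\pi$ through the Fermionic functor $\Gamma_{-1}$, obtaining a strongly continuous action $\alpha\colon G\to\mathrm{Aut}(\Gamma_{-1}(\mathcal{H}))$ by trace-preserving $\ast$-automorphisms $\alpha_s=\Gamma_{-1}(\pi(s))$, and form the crossed product $\Gamma_{-1}(\mathcal{H})\rtimes_\alpha G$. Unimodularity of $G$ is crucial here: it guarantees that the Plancherel weight is a trace, so that the product of the vacuum trace on $\Gamma_{-1}(\mathcal{H})$ and $\omega_G$ turns the crossed product into a tracial von Neumann algebra, and that $VN(G)$ sits inside it with a normal trace-preserving conditional expectation $\mathbb{E}$ onto $VN(G)$. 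To produce an absolute dilation with the correct iteration structure $T_u^k=\mathbb{E}U^kJ$, I would then amplify $\mathcal{H}$ to a larger Hilbert space (for instance $\widetilde{\mathcal{H}}=\bigoplus_{n\in\mathbb{Z}}\mathcal{H}$ with bilateral shift $\sigma$ commuting with the diagonal representation $\widetilde{\pi}$), set $N'=\Gamma_{-1}(\widetilde{\mathcal{H}})\rtimes_{\widetilde{\alpha}}G$, take $U$ to be the trace-preserving $\ast$-automorphism induced by $\sigma$ on the Fermionic part and the identity on the $G$-part, and define a trace-preserving one-to-one $\ast$-homomorphism $J\colon VN(G)\to N'$ of the form $J(\lambda(s))=V_s\lambda(s)$, where $V_s\in\Gamma_{-1}(\widetilde{\mathcal{H}})$ is a suitable Weyl/Wick-type unitary cocycle built from copies of $\xi$ placed in different summands of $\widetilde{\mathcal{H}}$. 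The identity $\mathbb{E}(U^kJ(\lambda(s)))=u(s)^k\lambda(s)$ should then follow from a direct trace computation in the Fermionic algebra, exploiting the CAR-type Wick formula together with the mutual orthogonality of the shifted copies of $\xi$.

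The hardest points I foresee are twofold. First, setting up the ambient tracial crossed product rigorously in the locally compact setting: one must handle strong continuity of $\alpha$, measurability of $s\mapsto V_s$, and the explicit construction of the crossed product on $\mathcal{K}(G,\Gamma_{-1}(\mathcal{H}))$, checking that unimodularity indeed yields a trace. Second, pinning down the precise cocycle $V$ and shift $U$ so that the Fermionic trace computation collapses exactly to $\langle\pi(s)\xi,\xi\rangle^k=u(s)^k$ rather than some other functional of $u$; this is the combinatorial heart of the argument and is where the difference between the discrete and locally compact cases is most delicate. Once the absolute dilation of $T_u$ is established at the von Neumann algebra level, Lemma~\ref{lem absdila implique dila p} immediately upgrades it to dilatability of $T_u$ on $L^p(VN(G),\omega_G)$ for every $1\leq p<\infty$, concluding the proof.
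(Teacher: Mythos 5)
Your overall architecture coincides with the paper's: you translate the hypotheses into $u(e)=1$, $u$ real and positive definite (this is Theorem \ref{DCH}), apply a GNS-type construction to $u$ to get a real Hilbert space with a translation action and a cyclic vector realizing $u(s)$ as an inner product (the paper's $H_u$, $\theta_t$ and $\dot{\delta_e}$, with $\langle\dot{\delta_e},\dot{\delta_s}\rangle_{H_u}=u(s)$), amplify by $\ell^2_\R(\Z)$, second-quantize via $\Gamma_{-1}$ to get a trace preserving point-$w^*$-continuous action $\alpha$, form the crossed product $\prodalpha$, and use unimodularity together with trace preservation of $\alpha$ to see that the dual weight is a trace and that $J(x)=x\overline{\otimes}I_H$ admits a trace preserving conditional expectation. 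All of this is exactly Section 5 of the paper, and your final reduction to $L^p$ via Lemma \ref{lem absdila implique dila p} is also how the paper concludes.

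The gap is in your proposed pair $(U,J)$: you take $U=\tilde{\rho}$, the pure shift extended to the crossed product (as in Theorem \ref{th application sunder}), and move the twist into the embedding, $J(\lambda(s))=V_s\,(\lambda(s)\overline{\otimes}I_H)$ with $V_s=\pi_\alpha(v_s)$ a unitary cocycle. This cannot produce the required $k$-dependence. Since $\tilde{\rho}$ fixes $\lambda(s)\overline{\otimes}I_H$ and $\rho'$ is trace preserving, $U^kJ(\lambda(s))=\pi_\alpha(\rho'^k(v_s))(\lambda(s)\overline{\otimes}I_H)$, and by Lemma \ref{prop esperance simplification} the canonical conditional expectation sends this to $\tau(\rho'^k(v_s))\lambda(s)=\tau(v_s)\lambda(s)$ for \emph{every} $k$; with the conditional expectation associated with your twisted $J$ the computation is slightly longer but, for $v_s$ a word in finitely many coordinates of $\ell^2_\R(\Z)$, the factors $v_t^*$ and $\rho'^k(v_s)$ occupy orthogonal copies of $H_u$ once $k$ is large and the trace factorizes, so the answer is again eventually constant in $k$ (and an infinite Fermionic word would not converge in $\G$). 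Hence $\mathbb{E}U^kJ=T_u^k$ fails for $k\geq 2$. The whole point of the construction is that after $k$ iterations a word of $2k$ generators $\omega(\cdot)$ must sit in front of $\lambda(s)$, so that the pair-partition formula of Lemma \ref{prop trace w w avec produit scalaire} collapses its trace to $u(s)^k$. The paper achieves this by keeping $J$ untwisted and twisting $U$ instead: with the symmetry $d=\pi_\alpha(\omega(\dot{\delta_e}\otimes\varepsilon_0))$ (here $d^2=1$ precisely because $u(e)=1$) one sets $U=d\,\tilde{\rho}(\cdot)\,d$, so that $U^k$ accumulates $d\,\tilde{\rho}(d)\cdots\tilde{\rho}^{k-1}(d)$ on each side; commuting these past $J(\lambda(t))$ via Lemma \ref{prop commutationJ et pi} yields the word $\omega(\dot{\delta_e}\otimes\varepsilon_0)\cdots\omega(\dot{\delta_e}\otimes\varepsilon_{k-1})\omega(\dot{\delta_t}\otimes\varepsilon_{k-1})\cdots\omega(\dot{\delta_t}\otimes\varepsilon_0)$ of Lemma \ref{lemme ukj simplification}, whose trace is exactly $u(t)^k$ by the nested pairing. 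Your proposal needs this correction to go through.
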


We mention that in the two recent papers
\cite{A2} and \cite{A3}, C. Arhancet establishes dilation properties 
for $w^*$-continuous semi-groups of either 
measurable Schur 
multipliers, or Fourier multipliers on groups. 
However the techniques in these papers
do not apply to single operators. Consequently
the proofs of Theorems \textbf{A} and
\textbf{B} require different tools.

The last part of this paper is devoted to multivariable 
versions of Theorems \textbf{A} and
\textbf{B}. 
We obtain a dilation property for any $n$-tuple of either
Schur multipliers satisfying the assumptions of 
Theorems \textbf{A}, or 
Fourier multipliers  satisfying the assumptions of 
Theorems \textbf{B}. We will establish the following two
results.

\begin{theo}[\textbf{C}]
Let $\varphi_1,\dots,\varphi_n\in L^\infty(\Sigma^2)$ and assume each $M_{\varphi_i}$ is a self-adjoint, unital, positive Schur multiplier on $B(L^2(\Sigma))$. Then there exist a tracial von Neumann algebra $(\mathcal{M},\tau)$, a commuting $n$-tuple $(U_1,\dots,U_n)$ of trace preserving $\ast$-automorphisms on $\mathcal{M}$ and a trace preserving one-to-one $\ast$-homomorphism $J: B(L^2(\Sigma))\to\mathcal{M}$ such that 
\begin{align*}
M_{\varphi_1}^{k_1}\cdots M_{\varphi_n}^{k_n}=
\mathbb{E}U_1^{k_1}\cdots U_n^{k_n}J
\end{align*}
for all $k_i\in \N_0$, $1\leq i\leq n$, where 
$\mathbb{E}:M\to B(L^2(\Sigma))$ is  the conditional 
expectation associated with $J$.
\end{theo}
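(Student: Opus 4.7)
The strategy is to assemble a single dilation space that simultaneously accommodates all $n$ multipliers, by exploiting the orthogonal-direct-sum functoriality of the fermion construction underlying Theorem~\textbf{A}.

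First, I would apply Schoenberg's theorem (as in the proof of Theorem~\textbf{A}) to each $\varphi_i$: the self-adjointness, unitality and positivity of $M_{\varphi_i}$ translate into the real-valuedness, the normalisation $\varphi_i(s,s)=1$, and the positive definiteness of the kernel $\varphi_i$, so there exist a real Hilbert space $\h_i$ and a measurable map $\alpha_i:\Sigma\to\h_i$ with $\|\alpha_i(s)\|=1$ and $\varphi_i(s,t)=\langle\alpha_i(s),\alpha_i(t)\rangle$ almost everywhere.

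Next, I would form the orthogonal direct sum $\h:=\h_1\oplus\cdots\oplus\h_n$, and let $\tilde\alpha_i:\Sigma\to\h$ denote $\alpha_i$ composed with the canonical inclusion of the $i$-th summand, so that $\langle\tilde\alpha_i(s),\tilde\alpha_j(t)\rangle=\delta_{ij}\varphi_i(s,t)$. I take $\mathcal{M}:=B(L^2(\Sigma))\,\overline{\otimes}\,\Gamma_{-1}(\h)$, endowed with the tensor trace, and define $J:B(L^2(\Sigma))\to\mathcal{M}$ and the conditional expectation $\mathbb{E}:\mathcal{M}\to B(L^2(\Sigma))$ by the same recipe as in the proof of Theorem~\textbf{A}, now using the composite space $\h$ in place of a single $\h_i$. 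For each $i$, let $R_i\in O(\h)$ be the orthogonal map coinciding on $\h_i$ with the rotation used in Theorem~\textbf{A} to dilate $M_{\varphi_i}$, and equal to the identity on $\bigoplus_{j\ne i}\h_j$. Since the $R_i$ act on pairwise orthogonal summands they commute, and by functoriality of the fermionic second quantisation $\Gamma_{-1}$ the induced trace-preserving $\star$-automorphisms $\Gamma_{-1}(R_i)$ of $\Gamma_{-1}(\h)$ commute as well. I set $U_i:=\mathrm{id}\otimes\Gamma_{-1}(R_i)$; these are commuting trace-preserving $\star$-isomorphisms of $\mathcal{M}$.

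To verify the dilation formula, the key observation is that $R_j$ acts trivially on $\h_i$ whenever $j\ne i$, so $U_j$ fixes every element of $\Gamma_{-1}(\h)$ built from $\h_i$. Evaluating $\mathbb{E}U_1^{k_1}\cdots U_n^{k_n}J$ on a rank-one operator $e_{st}\in S^2(L^2(\Sigma))$, the single-variable argument of Theorem~\textbf{A} applies independently in each summand $\h_i$, producing the factor $\varphi_i(s,t)^{k_i}$; all mixed contributions vanish under the vacuum/trace expectation by the standard Wick pairing calculus in $\Gamma_{-1}$, which treats orthogonal subspaces as independent. Multiplying the factors yields $\prod_i\varphi_i(s,t)^{k_i}\,e_{st}=M_{\varphi_1}^{k_1}\cdots M_{\varphi_n}^{k_n}(e_{st})$, as required. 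The principal obstacle I anticipate is precisely this last verification: it requires recasting the proof of Theorem~\textbf{A} in a form manifestly functorial in the Schoenberg Hilbert space, so that feeding it the direct sum $\bigoplus_i\h_i$ together with rotations supported on individual summands automatically decouples the product $U_1^{k_1}\cdots U_n^{k_n}$ into independent single-variable contributions. Once this functorial viewpoint is installed, commutativity of the $U_i$ and the multivariable identity both follow formally from orthogonality of the $\h_i$ and the independence of fermionic variables built from orthogonal subspaces.
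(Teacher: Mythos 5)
Your construction has a genuine gap, and it starts with the ingredient you import from Theorem \textbf{A}: there is no ``rotation $R$ on the Schoenberg Hilbert space'' in that proof. The single-variable dilation is \emph{not} of the form $\mathrm{id}\otimes\Gamma_{-1}(R)$; it is $U(y)=d\,\bigl((\mathrm{id}\otimes\mathcal{S})(y)\bigr)\,d$ on $B(L^2(\Sigma))\overline{\otimes}N^\infty$, where $N^\infty$ is an infinite tensor power of the fermion algebra, $\mathcal{S}$ is the shift, and $d$ is a self-adjoint symmetry living in $L^\infty(\Sigma)\overline{\otimes}N^\infty$ --- crucially \emph{not} in $1\otimes N^\infty$, so that $U$ genuinely entangles the $B(L^2(\Sigma))$ leg with the fermionic leg. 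Second-quantized orthogonal maps are the tool of the semigroup papers cited in the introduction, which (as the paper notes) do not apply to a single multiplier: a general self-adjoint unital positive $\varphi$ need not embed into an orthogonal one-parameter flow. Worse, even if such $R_i$ existed, any automorphism of the form $U_i=\mathrm{id}\otimes\Gamma_{-1}(R_i)$ fixes $J(x)=x\otimes 1$ pointwise, and since $\Gamma_{-1}(R_i)$ is unital and trace preserving one gets $\mathbb{E}U_1^{k_1}\cdots U_n^{k_n}J(x)=\mathbb{E}(x\otimes 1)=x$ for every $x$; your formula would then assert $M_{\varphi_1}^{k_1}\cdots M_{\varphi_n}^{k_n}=\mathrm{id}$. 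The dilating automorphisms must fail to commute with $J$, which is exactly what conjugation by the symmetries $d_i$ achieves.

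There is also a fermionic obstruction to the direct-sum idea itself: $\Gamma_{-1}(\h_1\oplus\cdots\oplus\h_n)$ is not $\Gamma_{-1}(\h_1)\overline{\otimes}\cdots\overline{\otimes}\Gamma_{-1}(\h_n)$, because by the relation $\omega(e)\omega(f)+\omega(f)\omega(e)=2\langle e,f\rangle$ the generators attached to orthogonal summands \emph{anticommute} rather than commute. If you tried to repair your scheme by reinstating the symmetries $d_i$ of Theorem \textbf{A} inside $\Gamma_{-1}(\bigoplus_i\h_i)$, they would anticommute and $U_1,U_2$ would not commute. The paper's actual route for Theorem \textbf{C} sidesteps this: it takes the spatial tensor product $N=\Gamma_{-1}(\mathbb{H}_1)\overline{\otimes}\cdots\overline{\otimes}\Gamma_{-1}(\mathbb{H}_n)$ (so the $d_i$, supported in distinct tensor legs, genuinely commute), forms $N^\infty=\overline{\otimes}_{\Z}N$, uses $n$ partial shifts $\mathcal{S}_i$ each acting only on the $i$-th leg of every copy, and sets $U_i(y)=d_i\,(\mathrm{id}\otimes\mathcal{S}_i)(y)\,d_i$; commutativity and the product formula then follow from the same $\Gamma$-calculus as in Section 4. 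I would encourage you to rebuild your argument on that template rather than on the direct sum of Schoenberg spaces.
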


\begin{theo}[\textbf{D}]
Let $G$ be a unimodular locally compact group and 
let $T_{u_1},\ldots, T_{u_n}$ be self-adjoint, unital, completely positive Fourier multipliers on $VN(G)$. Then 
there exist a tracial von Neumann algebra $(\mathcal{M},\tau)$, a commuting $n$-tuple $(U_1,\dots,U_n)$ of trace preserving $\ast$-automorphisms on $\mathcal{M}$ and a trace preserving one-to-one $\ast$-homomorphism 
$J: VN(G)\to \mathcal{M}$ such that 
\begin{align*}
T_{u_1}^{k_1}\cdots T_{u_n}^{k_n}=\mathbb{E}U_1^{k_1}\cdots U_n^{k_n}J
\end{align*}
for all $k_i\in \N_0$, $1\leq i\leq n$, where 
$\mathbb{E}:M\to B(L^2(\Sigma))$ is the conditional 
expectation associated with $J$.
\end{theo}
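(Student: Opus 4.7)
The plan is to combine the single-variable dilation machinery from the proof of Theorem \textbf{B}, applied to each symbol $u_i$, by stitching the resulting data together along a direct sum of underlying Hilbert spaces, so that the orthogonality of the pieces translates into commutation of the dilating automorphisms.

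First, I would apply the construction underlying Theorem \textbf{B} to each Fourier multiplier $T_{u_i}$ individually. Since each $T_{u_i}$ is unital, self-adjoint and completely positive, the symbol $u_i\colon G\to\R$ is a continuous, normalized, real positive definite function; a GNS-type procedure then yields a real Hilbert space $\h_i$, an orthogonal representation $\pi_i\colon G\to O(\h_i)$, and a distinguished unit vector $\xi_i\in\h_i$ with $u_i(s)=\langle\pi_i(s)\xi_i,\xi_i\rangle$ for all $s\in G$. Setting $\h=\bigoplus_{i=1}^n\h_i$ and $\pi=\bigoplus_{i=1}^n\pi_i$, I would take $\mathcal{M}=\prodalpha$, where $\alpha$ is the Bogoliubov action of $G$ on $\G$ attached to $\pi$, equipped with the tensor product of the canonical trace on $\G$ and the Plancherel trace of $VN(G)$. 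The embedding $J\colon VN(G)\to\mathcal{M}$ sends $\lambda(s)$ to the crossed-product unitary $\lambda^\alpha(s)$, and the associated conditional expectation $\E\colon\mathcal{M}\to VN(G)$ is determined by the vacuum state of $\G$.

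For each $i$, the orthogonal operators on $\h$ that implement the single-variable dilation on the $\h_i$-summand while acting as the identity on $\bigoplus_{j\neq i}\h_j$ are $G$-equivariant, so they induce Bogoliubov automorphisms of $\G$ which commute with $\alpha$, and therefore extend to trace preserving $\star$-automorphisms $U_1,\dots,U_n$ of $\mathcal{M}$. Because the underlying orthogonal operators act on pairwise orthogonal subspaces of $\h$, they commute among themselves, and this commutation is inherited by $U_1,\dots,U_n$.

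It then remains to verify the multi-index dilation identity, which I would check on the generators $\lambda(s)$ and extend by $w^*$-density. This is where the main work lies: one has to exploit the factorization of $\G$ as a graded tensor product of the $\Gamma_{-1}(\h_i)$ arising from the orthogonal decomposition $\h=\bigoplus_i\h_i$, so that the canonical trace on $\G$ factors across the components. Then $\E\bigl(U_1^{k_1}\cdots U_n^{k_n}J(\lambda(s))\bigr)$ splits as a product of one-variable expectations, each of which, by the single-variable computation of Theorem \textbf{B}, contributes the factor $u_i(s)^{k_i}$, yielding $u_1(s)^{k_1}\cdots u_n(s)^{k_n}\lambda(s)$. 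The principal obstacle is ensuring this factorization remains compatible with the crossed product structure, that is, that inside $\mathcal{M}$ the pieces $U_i^{k_i}J(\lambda(s))$ interact only through a common factor of $\lambda^\alpha(s)$ without further entanglement between the $\G$-components; this is where the $G$-equivariance of the $U_i$ and the orthogonality of their supports become essential.
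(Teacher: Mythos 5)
Your overall architecture is the right one and is close in spirit to the paper's: run the one-variable construction of Theorem \textbf{B} for each $u_i$, glue the resulting Fermionic data into a single crossed product by a $G$-action, get commutation of the $U_i$ from the "disjointness" of the pieces, and evaluate $\E U_1^{k_1}\cdots U_n^{k_n}J(\lambda(s))$ by a trace computation that factors over the pieces. But there is one concrete gap in the middle of your argument. You define the $U_i$ as the extensions to $\mathcal{M}$ of Bogoliubov automorphisms of $\G$ coming from $G$-equivariant orthogonal operators on $\h$. That is only half of the single-variable dilation: in Theorem \textbf{B} the automorphism is $U(x)=d^*\tilde{\rho}(x)d$, where $\tilde{\rho}$ is the extension of the Bogoliubov shift \emph{and} $d=\pi_\alpha(\omega(\dot{\delta_e}\otimes\varepsilon_0))$ is a self-adjoint symmetry coming from the field operator at the distinguished vector. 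If $U_i$ were merely the extended Bogoliubov automorphism $\tilde{\rho}_i$, then $\tilde{\rho}_i(J(\lambda(s)))=J(\lambda(s))$ and you would get $\E U_1^{k_1}\cdots U_n^{k_n}J(\lambda(s))=\lambda(s)$, not $u_1(s)^{k_1}\cdots u_n(s)^{k_n}\lambda(s)$. The factors $u_i(s)^{k_i}$ are produced precisely by the conjugations by $d_i$ interacting with $J(\lambda(s))$ through the covariance relation $J(\lambda(s))\pi_\alpha(x)=\pi_\alpha(\alpha(s)(x))J(\lambda(s))$. Your final paragraph implicitly uses the full construction, but as stated the definition of the $U_i$ does not support it; you must introduce the symmetries $d_i$ and set $U_i=d_i^*\tilde{\rho}_i(\cdot)d_i$.

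Once the $d_i$ are in the picture, your choice of ambient algebra creates a second issue you never confront. The paper takes the von Neumann tensor product $\Gamma_{-1}(\mathcal{H}_1)\overline{\otimes}\cdots\overline{\otimes}\Gamma_{-1}(\mathcal{H}_n)$ with the product action, so that $d_i=\pi_\alpha(1\otimes\cdots\otimes\omega_i(\dot{\delta}_e\otimes\varepsilon_0)\otimes\cdots\otimes 1)$ genuinely commute, and the trace is literally $\tau_1\overline{\otimes}\cdots\overline{\otimes}\tau_n$. You instead take $\Gamma_{-1}\bigl(\bigoplus_i\h_i\bigr)$, which is the \emph{graded} tensor product: by the relation $\omega(e)\omega(f)+\omega(f)\omega(e)=2\langle e,f\rangle_H$, field operators attached to vectors in different orthogonal summands \emph{anticommute}, so $d_1,\dots,d_n$ anticommute rather than commute. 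This does not actually break the argument — conjugation by $d_id_j$ coincides with conjugation by $d_jd_i$ since the global sign cancels, so $U_iU_j=U_jU_i$ survives, and the mixed trace $\tau(\omega(f_1)\cdots\omega(f_{2K}))$ of the nested word still factors into $\prod_i u_i(s)^{k_i}$ by Lemma \ref{prop trace w w avec produit scalaire}(2), because all non-nested inner products vanish. But these verifications are exactly where the content lies, and your appeal to "the canonical trace factors across the components" is not true at the level of algebras; it only holds for the particular nested words that arise. The paper's tensor-product choice sidesteps both subtleties, which is why it is the cleaner route.
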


\section{Preliminaries, $L^p_\sigma$-spaces and  Fermions}
For any Banach space $X$ and any $(x,x^*)\in X\times X^*$, we let 
$\langle x^*,x\rangle$ denote the action of $x^*$ on $x$. Whenever $H$ is a Hilbert space, we let $\langle\cdot,\cdot\rangle_H$
denote the inner product on $H$, that we assume linear
in the first variable and anti-linear in the second variable.

Let $\Sigma$ be a $\sigma$-finite measure space and let $X$ be a Banach space. For any $1\leq p\leq +\infty$, we let 
$L^p(\Sigma,X)$ denote the classical Bochner space of strongly
measurable function $f:\Sigma\to X$ (defined up to almost everywhere zero functions) such that the norm function $t\mapsto \|f(t)\|$ 
belongs to $L^p(\Sigma)$, equipped
with $\Vert f\Vert_p = \Vert t\mapsto \|f(t)\|\Vert_{L^p(\Sigma)}$.
(see \cite[p. 49-50]{Diesel} for more explanations). We will use the
fact that if $p$ is finite, then $L^p(\Sigma)\otimes X$ is
a dense subspace of $L^p(\Sigma,X)$.

Throughout we let $L^p_\R(\Sigma)$ denote the
subset of real-valued functions in $L^p(\Sigma)$.

We denote by $X\hat{\otimes}Y$ the projective 
tensor product of any two Banach spaces $X$ and $Y$ (see \cite{Ryan}).
We recall the isometric isomorphisms
\begin{equation}\label{L1}
L^1(\Sigma,Y)\simeq L^1(\Sigma)\hat{\otimes}Y
\qquad\hbox{and}\qquad
(L^1(\Sigma)\hat{\otimes}Y)^*\simeq B(L^1(\Sigma),Y^*),
\end{equation}
see \cite[example 2.19]{Ryan} and discussion p. 24 of the same book.

For any 
$x,y\in L^2(\Sigma)$, we consider $x\otimes y$ as an
element of $L^2(\Sigma^2)$ by writing 
$(x\otimes y)(s,t)=x(s)y(t)$ for $(s,t)\in\Sigma^2$.
Then using (\ref{Sf}), we have
$$
S_{x\otimes y}(h) =\left(\int\limits_\Sigma y(t)h(t)dt \right)x,
\qquad h\in L^2(\Sigma).
$$
Very often we will identify $x\otimes y$
and $S_{x\otimes y}$. Thus 
$L^2(\Sigma)\otimes L^2(\Sigma)$ is regarded as the
space of finite rank operators on $L^2(\Sigma)$. This extends to an 
isometric isomorphism
\begin{equation}\label{S1}
L^2(\Sigma)\hat{\otimes}L^2(\Sigma)\simeq S^1(L^2(\Sigma)),
\end{equation}
where $S^1(L^2(\Sigma))$ denotes the Banach space of trace
class operators on $L^2(\Sigma)$.

Another useful result on the projective 
tensor product is the isometry $L^1(\Sigma_1)\hat{\otimes}L^1(\Sigma_2)
\simeq L^1(\Sigma_1\times \Sigma_2)$.

Let $N$ be a von Neumann algebra. 
We recall that the product is separately $w^*$-continuous 
on $N$. More precisely, 
for any $y\in N$ and $\eta\in N_*$, 
the map $z\mapsto \langle yz,\eta \rangle$ from $N$ into $\C$ is continuous. 
We let $\eta y\in N_*$ such that 
$\langle yz,\eta\rangle= \langle z,\eta y \rangle$
for all $z\in N$. Likewise, the map $z\mapsto \langle zy,\eta\rangle$ from $N$ to $\C$ is
continuous and we 
let $y\eta\in N_*$ such that 
$\left\langle zy,\eta\right\rangle=\left\langle z,y\eta \right\rangle$
for all $z\in N$.

We now turn to a dual variant of the Bochner spaces $L^p(\Sigma,N)$. We mostly follow
\cite{Hensen}, to which we refer for more references and details.
We say that a function $f:\Sigma\to N$ is $w^*$-measurable if,
for all $\eta\in N_*$, the function $t\mapsto \langle f(t),\eta\rangle$ is measurable.
Fix some $1< q\leq\infty$.
We recall that every order bounded subset of $L^q_\R(\Sigma)$ has a supremum in $L^q_\R(\Sigma)$, denoted by $L^q-\sup$. 
We define 
\begin{align*}
\mathcal{L}^q_\sigma(\Sigma,N):=\left\lbrace \begin{array}{cc}&f:\Sigma\to N
\text{ } w^*\text{-measurable }; \langle f,\eta\rangle\in L^q(\Sigma)\text{ } \forall \eta\in N_*\\
&\text{ and } \lbrace |\langle f,\eta\rangle|:\|\eta\|\leq 1\rbrace\text{ is order bounded in }L^q_\R(\Sigma)\end{array}\right\rbrace.
\end{align*}
We define, for all $f\in \mathcal{L}^q_\sigma(\Sigma,N)$, the semi-norm 
$$
\Vert f\Vert_{\mathcal{L}^q} = 
\bigl\Vert L^q-\sup\lbrace |\langle f,\eta\rangle:\|\eta\|\leq 1\rbrace\bigr\Vert_{L^q(\Sigma)}.
$$
The kernel of this semi-norm is
\begin{align*}
N_\sigma:=\lbrace f\in \mathcal{L}^q_\sigma(\Sigma,N): 
\text{ }\forall \eta\in N_*, \langle f,\eta\rangle=0\text{ a.e.}\rbrace.
\end{align*}
We set $L^q_\sigma(\Sigma,N)=\mathcal{L}^q_\sigma(\Sigma,N)/N_\sigma$. This is
a Banach space for the resulting norm, which we simply denote by
$\|.\|_q$. We note that $L^q(\Sigma,N)\subset L^q_\sigma(\Sigma,N)$ isometrically and
that $L^q(\Sigma)\otimes N$ is $w^*$-dense in $L^q_\sigma(\Sigma,N)$.

Let $1\leq p\leq \infty$
such that $\dfrac{1}{p}+\dfrac{1}{q}=1$. For any $h\in L^p(\Sigma,N_*)$ 
and $f\in L^q_\sigma(\Sigma,N)$, the function 
$t\mapsto\langle f(t),h(t)\rangle$ belongs 
to $L^1(\R)$ and satisfies
$\int_\Sigma|\langle f(t),h(t)\rangle| dt\leq \|h\|_p\|f\|_q$. 
This allows to define
\begin{equation}\label{Dual}
\langle f,h\rangle = \int_\Sigma \langle f(t),h(t)\rangle dt,
\qquad h\in
L^p(\Sigma,N_*),\ f\in L^q_\sigma(\Sigma,N).
\end{equation}

\begin{theorem}\label{th Lpsigma predual Lq }
Let $1\leq p<\infty$ and $1<q\leq \infty$ such that 
$\dfrac{1}{p}+\dfrac{1}{q}=1$. Then the duality pairing (\ref{Dual})
extends to an isometric isomorphism
\begin{align*}
L^p(\Sigma,N_*)^*\simeq L^q_\sigma(\Sigma,N).
\end{align*}
\end{theorem}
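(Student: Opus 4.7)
The plan is to define a map $\Phi : L^q_\sigma(\Sigma,N) \to L^p(\Sigma,N_*)^*$ by $\Phi(f)(h) = \int_\Sigma \langle f(t),h(t)\rangle\, dt$ as in (\ref{Dual}), and to verify in sequence that (i) $\Phi$ is well-defined and contractive, (ii) $\Phi$ is isometric, and (iii) $\Phi$ is surjective. Throughout, I will rely on the density of $L^p(\Sigma) \otimes N_*$ in $L^p(\Sigma,N_*)$ for $p < \infty$, the identification $L^1(\Sigma,N_*) \simeq L^1(\Sigma)\hat{\otimes}N_*$ from (\ref{L1}), and classical $L^p$-$L^q$ duality. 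For (i), set $s := L^q\text{-}\sup\{|\langle f,\eta\rangle| : \|\eta\|\le 1\}$, so that $\|f\|_q = \|s\|_{L^q(\Sigma)}$. For a simple $h = \sum_i \mathbf{1}_{A_i}\otimes\eta_i$, the scalar integrand $\langle f,h\rangle$ is measurable and dominated a.e.\ by $s(t)\|h(t)\|$; H\"older's inequality then gives $|\Phi(f)(h)| \le \|f\|_q\|h\|_p$, and a density argument extends the bound to all $h \in L^p(\Sigma,N_*)$. Injectivity of $\Phi$ is immediate from the very definition of $N_\sigma$.

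For (ii), given $\varepsilon > 0$, I would approximate $s$ from below by $\max_{1\le i\le n}|\langle f,\eta_i\rangle|$ for suitable $\eta_1,\dots,\eta_n$ in the unit ball of $N_*$, partition $\Sigma$ into the measurable sets on which this maximum is attained, and combine with a scalar $L^p$-norming density (via classical duality for $L^p(\Sigma)$) to build an $h \in L^p(\Sigma,N_*)$ with $\|h\|_p \le 1$ and $\Phi(f)(h) \ge \|f\|_q - \varepsilon$. For (iii), given $T \in L^p(\Sigma,N_*)^*$, for each $\eta \in N_*$ the functional $g \mapsto T(g\otimes\eta)$ on $L^p(\Sigma)$ is represented by a unique $\phi_\eta \in L^q(\Sigma)$ with $\|\phi_\eta\|_q \le \|T\|\|\eta\|$. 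The task is to realize the family $(\phi_\eta)_{\eta \in N_*}$ as $\langle f,\cdot\rangle$ for a single $w^*$-measurable $f : \Sigma \to N$. In the case $p = 1$, the identification (\ref{L1}) yields $L^1(\Sigma,N_*)^* \simeq B(L^1(\Sigma),N)$, and a Radon-Nikodym type argument on each finite-measure set, assembled via $\sigma$-finiteness, produces $f \in L^\infty_\sigma(\Sigma,N)$ realizing $T$. For $1<p<\infty$ I would proceed analogously on each finite-measure $A \subset \Sigma$ and patch together; the norm bound $\|f\|_q \le \|T\|$ then follows from the isometric property established in (ii) once $\langle f,\eta\rangle = \phi_\eta$ a.e.\ has been verified.

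The main obstacle is the measurable-selection step in (iii) when $N_*$ is not separable: one cannot build a $w^*$-measurable $f$ from the family $(\phi_\eta)_{\eta\in N_*}$ by a naive diagonal choice. This is precisely the reason why the larger space $L^q_\sigma$, obtained from $\mathcal{L}^q_\sigma$ by quotienting by $N_\sigma$, is the natural predual: the quotient absorbs the ambiguity in the choice of representatives so that the equivalence class of $f$ is unambiguously defined. I anticipate that the $\sigma$-finiteness of $\Sigma$ together with the formalism developed in \cite{Hensen} handles this uniformly, and that step (ii) can then be used in reverse to upgrade the pointwise control on $\langle f,\eta\rangle$ into the desired norm identity $\|f\|_q = \|T\|$.
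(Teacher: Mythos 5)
The paper does not actually prove this theorem: it is stated as a known result and attributed to \cite{Hensen}, so there is no internal argument to compare yours against. Judged on its own terms, your outline follows the standard architecture (contractivity, isometry, surjectivity), and steps (i) and (ii) are essentially sound modulo routine details --- for (ii) you should invoke the countable sup property of $L^q_\R(\Sigma)$ over a $\sigma$-finite measure space to replace the order supremum by an increasing sequence of finite maxima, and remember that $\langle f,\eta_i\rangle$ is complex-valued, so the norming element must carry a measurable unimodular factor.

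The genuine gap is in step (iii), and you have correctly located it but not closed it. Given $T\in L^p(\Sigma,N_*)^*$ you obtain a bounded linear family $\eta\mapsto\phi_\eta$ with each $\phi_\eta$ an \emph{equivalence class} in $L^q(\Sigma)$; to produce $f\in\mathcal{L}^q_\sigma(\Sigma,N)$ you must choose pointwise representatives coherently so that for almost every $t$ the map $\eta\mapsto\phi_\eta(t)$ is a bounded linear functional on $N_*$, i.e.\ an element of $N$. This is a lifting problem, and it is the entire mathematical content of the theorem; it is resolved in \cite{Hensen} via a lifting theorem (or equivalently via the von Neumann tensor product structure behind identification (ii) of (\ref{th identification lsigmainfty ...})), not by a Radon--Nikod\'ym argument patched over finite-measure pieces. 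Your remark that the quotient by $N_\sigma$ ``absorbs the ambiguity'' does not help here: the quotient only identifies two $w^*$-measurable representatives once you have one, it does not produce a $w^*$-measurable selection from the family $(\phi_\eta)$. Finally, your route for $p=1$ through $L^1(\Sigma,N_*)^*\simeq B(L^1(\Sigma),N)\simeq L^\infty_\sigma(\Sigma,N)$ is circular in the context of this paper, since the second identification is precisely what the paper derives \emph{from} Theorem \ref{th Lpsigma predual Lq } together with (\ref{L1}).
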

This theorem is due to Bukhvalov, see  \cite[Theorem 4.1]{Buka2}, \cite[Theorem 0.1]{Buka3}.

We have isometric identifications
\begin{equation}\label{th identification lsigmainfty ...}
L^\infty(\Sigma)\overline{\otimes}N\overset{(ii)}{\simeq}
L^\infty_\sigma(\Sigma,N)\overset{(i)}{\simeq} B(L^1(\Sigma),N),
\end{equation}
where $L^\infty(\Sigma)\overline{\otimes}N$ denotes
the von Neumann
tensor product of $L^\infty(\Sigma)$ and $N$.
The identification (i) follows from (\ref{L1}) and 
Theorem \ref{th Lpsigma predual Lq }. More explicitly, 
for any $F\in L^\infty_\sigma(\Sigma,N)$, the associated 
operator $T\in B(L^1(\Sigma),N)$ provided by (i) is given by
\begin{align}\label{equalite eta}
\forall h\in L^1(\Sigma),\text{ }\forall \eta\in N_*,\text{ } \langle T(h),\eta\rangle=\int_\Sigma\langle F(t),\eta\rangle h(t)dt.
\end{align}
The identification (ii) is proved in \cite{Sakai}. 
It is a combination of \cite[Definition 1.22.10]{Sakai} and \cite[Theorem 1.22.12]{Sakai}.

Concerning  $L^\infty(\Sigma)\overline{\otimes}N$, we 
mention that if $N$ is equipped with a n.s.f. trace 
$\tau$, then we equip $L^\infty(\Sigma)\overline{\otimes}N$
with the unique n.s.f. trace
$\tilde{\tau}=\int\,\cdotp\overline{\otimes}\tau$
such that $\tilde{\tau}(h\otimes x)=\tau(x)\int_\Sigma h(t)dt$
for all $h\in L^\infty(\Sigma)_+$ and $x\in N_+$. Then 
for all $1\leq p<\infty$,
\begin{equation}\label{Lp}
L^p(L^\infty(\Sigma)\overline{\otimes}N,\tilde{\tau})\simeq 
L^p(\Sigma,L^p(N,\tau)).
\end{equation}
Indeed let 
\begin{equation}\label{E}
{\mathcal E}=\text{span}\bigl\{\chi_E : E
\ \text{measurable},\ \vert E\vert<\infty\bigr\}.
\end{equation}
Then ${\mathcal E}
\otimes (N\cap L^1(N))$ is both dense in 
$L^p(L^\infty\overline{\otimes}N,\tilde{\tau})$ and in $L^p(\Sigma,L^p(N))$, 
and the norms of $L^p(L^\infty\overline{\otimes}N,\tilde{\tau})$ and 
$L^p(\Sigma,L^p(N))$
coincide on this subspace.

\begin{lemme}\label{lem RcircF}
Let $V:N_1\to N_2$ be a $w^*$-continuous operator, 
where $N_1$ and $N_2$ are two von Neumann algebras. Then the map $V_\circ:L_\sigma^\infty(\Sigma,N_1)\to L^\infty_\sigma(\Sigma,N_2)$ given for all $F\in L_\sigma^\infty(\Sigma,N_1)$ by $V_\circ(F)=V\circ F$ is  well-defined 
and  $w^*$-continuous. Moreover,  $\|V_\circ\|=\|V\|$.
\end{lemme}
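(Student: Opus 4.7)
The plan is to exploit the duality $L^\infty_\sigma(\Sigma, N_i)\simeq L^1(\Sigma,(N_i)_*)^*$ from Theorem \ref{th Lpsigma predual Lq } and exhibit $V_\circ$ as the adjoint of a natural map on $L^1$-Bochner spaces. The key ingredient is the pre-adjoint $V_*: (N_2)_*\to (N_1)_*$ of $V$, which exists precisely because $V$ is $w^*$-continuous between von Neumann algebras, and which satisfies $\|V_*\|=\|V\|$.

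First I would check well-definedness. Given a representative $F$ of an element of $L^\infty_\sigma(\Sigma,N_1)$ and any $\eta\in(N_2)_*$, I would use the identity $\langle V\circ F(t),\eta\rangle=\langle F(t),V_*(\eta)\rangle$ to conclude that $V\circ F$ is $w^*$-measurable (since $V_*(\eta)\in(N_1)_*$) and that $\langle V\circ F,\eta\rangle\in L^\infty(\Sigma)$. The same identity shows the containment
\[
\bigl\{|\langle V\circ F,\eta\rangle|:\|\eta\|\leq 1\bigr\}\subset \|V\|\cdot\bigl\{|\langle F,\xi\rangle|:\|\xi\|\leq 1\bigr\},
\]
so the left-hand set is order bounded in $L^\infty_\R(\Sigma)$ and its $L^\infty$-sup is bounded above by $\|V\|$ times that of the right-hand set. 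This gives $\|V_\circ(F)\|_\infty\leq \|V\|\,\|F\|_\infty$, and in particular passes to equivalence classes modulo $N_\sigma$, so $V_\circ$ descends to a bounded map $L^\infty_\sigma(\Sigma,N_1)\to L^\infty_\sigma(\Sigma,N_2)$ with $\|V_\circ\|\leq\|V\|$.

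Next, to obtain $w^*$-continuity and the reverse norm inequality, I would introduce the map
\[
W: L^1(\Sigma,(N_2)_*)\longrightarrow L^1(\Sigma,(N_1)_*),\qquad W(h)(t)=V_*(h(t)),
\]
which is well-defined and bounded by $\|V_*\|=\|V\|$ (this is a routine verification on simple tensors, which are dense in the Bochner $L^1$-space). Using the pairing (\ref{Dual}), for any $F\in L^\infty_\sigma(\Sigma,N_1)$ and $h\in L^1(\Sigma,(N_2)_*)$ one computes
\[
\langle V_\circ(F),h\rangle=\int_\Sigma\langle V(F(t)),h(t)\rangle\,dt=\int_\Sigma\langle F(t),V_*(h(t))\rangle\,dt=\langle F,W(h)\rangle,
\]
which identifies $V_\circ$ with the adjoint $W^*$ under Theorem \ref{th Lpsigma predual Lq }. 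In particular $V_\circ$ is $w^*$-continuous. Finally, testing on $F=\chi_E\otimes x$ with $|E|\in(0,+\infty)$ and $x\in N_1$, one sees $\|F\|_\infty=\|x\|$ while $V_\circ(F)=\chi_E\otimes V(x)$ has norm $\|V(x)\|$, yielding $\|V_\circ\|\geq\|V\|$ and hence equality.

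The main obstacle is purely the bookkeeping around the $L^\infty$-sup defining the $L^\infty_\sigma$-norm: one must verify that applying $V$ pointwise behaves correctly with respect to this lattice operation. The containment argument above sidesteps any direct manipulation of the $L^\infty$-sup, which is why transferring everything via the pre-adjoint $V_*$ is the cleanest route.
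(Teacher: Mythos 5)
Your proof is correct and follows essentially the same route as the paper: the paper also obtains $V_\circ$ as the adjoint of the map $Id_{L^1(\Sigma)}\overline{\otimes}V_*$ on $L^1(\Sigma,(N_2)_*)$ via Theorem \ref{th Lpsigma predual Lq }. You simply supply more of the routine verifications (well-definedness, the duality computation, and the test on elementary tensors for $\|V_\circ\|\geq\|V\|$) that the paper leaves implicit.
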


\begin{proof}
The operator $V$ has a pre-adjoint 
$V_*:N_{2*}\to N_{1*}$. It is plain that $Id_{L_1(\Sigma)} \otimes V_*$
extends to a bounded map 
$Id_{L_1(\Sigma)}\overline{\otimes}V_* :
L^1(\Sigma,N_{2*})\to L^1(\Sigma,N_{1*})$, with norm
$\|Id_{L^1(\Sigma)}\overline{\otimes}V_*\|=\|V_*\|$. Then using
Theorem \ref{th Lpsigma predual Lq }, $V_\circ$ coincides with $\left(Id_{L_1(\Sigma)}\overline{\otimes}V_*\right)^*$.
\end{proof}

The product of two $N$-valued measurable functions is measurable. 
However the product of two $N$-valued  $w^*$-measurable functions is not 
necessarily $w^*$-measurable. We need the following to circumvent this difficulty.

\begin{lemme}\label{prop produit résonable}
Let  $T_1\in B(L^1(\Sigma);N)$ and $T_2\in B(L^1(\Sigma);N)$.
Then there exists a unique $P\in B(L^1(\Sigma^2),N)$ such that 
\begin{align*}
P(h_1\otimes h_2)=T_1(h_1)T_2(h_2),
\qquad h_1,h_2\in L^1(\Sigma).
\end{align*}
\end{lemme}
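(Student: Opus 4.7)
The plan is to exploit the identification $L^1(\Sigma_1)\hat{\otimes}L^1(\Sigma_2)\simeq L^1(\Sigma_1\times \Sigma_2)$ recalled in the paper, together with the universal property of the projective tensor product, which reduces the problem to producing a bounded bilinear form on $L^1(\Sigma)\times L^1(\Sigma)$.

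First I would define the bilinear map
$$
B\colon L^1(\Sigma)\times L^1(\Sigma)\longrightarrow N,\qquad
B(h_1,h_2)=T_1(h_1)\,T_2(h_2).
$$
Since $N$ is a von Neumann algebra the multiplication $N\times N\to N$ is contractive, so for $h_1,h_2\in L^1(\Sigma)$,
$$
\|B(h_1,h_2)\|_N\leq \|T_1(h_1)\|_N\,\|T_2(h_2)\|_N
\leq \|T_1\|\,\|T_2\|\,\|h_1\|_1\,\|h_2\|_1.
$$
Hence $B$ is a bounded bilinear map, and the universal property of the projective tensor product yields a unique bounded linear operator
$$
\widetilde B\colon L^1(\Sigma)\hat{\otimes}L^1(\Sigma)\longrightarrow N
$$
with $\widetilde B(h_1\otimes h_2)=T_1(h_1)T_2(h_2)$, and $\|\widetilde B\|\leq \|T_1\|\|T_2\|$.

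Next I would compose $\widetilde B$ with the canonical isometric isomorphism $L^1(\Sigma^2)\simeq L^1(\Sigma)\hat{\otimes}L^1(\Sigma)$ quoted in the preliminaries (the one that sends $h_1\otimes h_2$ to the function $(s,t)\mapsto h_1(s)h_2(t)$), and call the resulting element $P\in B(L^1(\Sigma^2),N)$. By construction $P(h_1\otimes h_2)=T_1(h_1)T_2(h_2)$ for all $h_1,h_2\in L^1(\Sigma)$.

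Finally, uniqueness is immediate: the linear span of elementary tensors $h_1\otimes h_2$ is dense in $L^1(\Sigma)\hat{\otimes}L^1(\Sigma)$, and hence in $L^1(\Sigma^2)$, so any two bounded operators agreeing on such tensors must coincide. There is really no obstacle here; the only subtle point is to rely on the paper's explicit identifications $L^1(\Sigma)\hat{\otimes}Y\simeq L^1(\Sigma,Y)$ and $L^1(\Sigma_1)\hat{\otimes}L^1(\Sigma_2)\simeq L^1(\Sigma_1\times\Sigma_2)$ rather than attempt to construct $P$ directly on simple functions, which would require a routine but tedious verification of well-definedness and boundedness.
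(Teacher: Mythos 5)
Your proposal is correct and follows the same route as the paper: bound the bilinear map $(h_1,h_2)\mapsto T_1(h_1)T_2(h_2)$ by $\|T_1\|\,\|T_2\|$ using submultiplicativity of the norm in $N$, invoke the universal property of the projective tensor product, and transfer to $L^1(\Sigma^2)$ via $L^1(\Sigma)\hat{\otimes}L^1(\Sigma)\simeq L^1(\Sigma^2)$. Your added remark on uniqueness via density of elementary tensors is the standard justification the paper leaves implicit.
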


\begin{proof}
We define  $Q:(h_1,h_2)\mapsto T_1(h_1)T_2(h_2)$ from $L^1(\Sigma)\times L^1(\Sigma)$ into $N$. This is a bounded bilinear map, with $\|Q\|\leq \|T_1\|\|T_2\|$. 
Hence there exists a bounded linear map $P:L^1(\Sigma)\hat{\otimes} L^1(\Sigma)\to N$ such that for all $h_1,h_2\in L^1(\Sigma)$,
$P(h_1\otimes h_2)=T_1(h_1)T_2(h_2)$.
Using $L^1(\Sigma)\hat{\otimes}L^1(\Sigma)\simeq
L^1(\Sigma^2)$, this
yields the result.
\end{proof}

Let $\varphi_1,\varphi_2\in L^\infty_\sigma(\Sigma,N)$.
Let $T_1, T_2\in B(L^1(\Sigma);N)$ be the representatives of $\varphi_1$, 
and $\varphi_2$, respectively, through the identification (\ref{th identification lsigmainfty ...}), (i).
We let 
\begin{align}
\tilde{\varphi_1\times\varphi_2}\in L^\infty_\sigma(\Sigma^2,N )
\end{align}
be the representative of 
the operator $P\in B(L^1(\Sigma^2),N)$ given by Lemma 
\ref{prop produit résonable}. Thus for all $h_1,h_2\in L^1(\Sigma)$ and $\eta\in N_*$,
\begin{align}\label{eq phitildetimes phi}
\langle T_1(h_1)T_2(h_2),\eta\rangle=\int_{\Sigma^2}\langle \tilde{\varphi_1\times\varphi_2}(s,t),\eta\rangle h_1(s)h_2(t)dtds.
\end{align}

\begin{rmq}\label{rmq produit tilde simple}
When $\varphi_1,\varphi_2\in L^\infty(\Sigma,N)$, the product 
$\tilde{\varphi_1\times\varphi_2}$ is simply given by
$\varphi_1\tilde{\times}\varphi_2(s,t)=
\varphi_1(s)\varphi_2(t)$,
since $\varphi_1,\varphi_2$ are both measurable.
\end{rmq}

\begin{lemme}\label{prop existence gamma}
There exists a unique $w^*$-continuous contraction $\Gamma:L^2_\sigma(\Sigma^2,N)\to B(L^2(\Sigma))\overline{\otimes}N$ such that for all $\theta\in L^2(\Sigma^2)$ and for all $y\in N$, $\Gamma(\theta\otimes y)=S_\theta\otimes y$.
\end{lemme}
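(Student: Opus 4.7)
The uniqueness of $\Gamma$ is immediate: since $L^2(\Sigma^2)\otimes N$ is $w^*$-dense in $L^2_\sigma(\Sigma^2,N)$ (in the spirit of the density statements recorded after Theorem \ref{th Lpsigma predual Lq }), any $w^*$-continuous operator is determined by its values on elementary tensors $\theta\otimes y$. For existence, my plan is to realise $\Gamma$ as the adjoint of a suitable contraction between preduals. By Theorem \ref{th Lpsigma predual Lq } with $p=q=2$, we have $L^2_\sigma(\Sigma^2,N)\simeq L^2(\Sigma^2,N_*)^*$, while the canonical contraction $S^1(L^2(\Sigma))\hat{\otimes} N_*\to (B(L^2(\Sigma))\overline{\otimes}N)_*$ dualises to an isometric $w^*$-closed embedding $B(L^2(\Sigma))\overline{\otimes}N\hookrightarrow (S^1(L^2(\Sigma))\hat{\otimes} N_*)^*\simeq B(S^1(L^2(\Sigma)),N)$.

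The preadjoint is constructed as follows: define a bounded bilinear map $R_0\colon S^1(L^2(\Sigma))\times N_*\to L^2(\Sigma^2,N_*)$ by $R_0(S,\eta)(s,t)=\theta_S(t,s)\,\eta$, where $\theta_S\in L^2(\Sigma^2)$ is the Hilbert--Schmidt kernel of $S$ (so that $S=S_{\theta_S}$). The transposition $(s,t)\mapsto(t,s)$ is an isometry of $L^2(\Sigma^2)$ and the inclusion $S^1\hookrightarrow S^2\simeq L^2(\Sigma^2)$ is a contraction, so $\|R_0\|\le 1$. By the universal property of the projective tensor product, $R_0$ extends to a contraction $R\colon S^1(L^2(\Sigma))\hat{\otimes} N_*\to L^2(\Sigma^2,N_*)$. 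I set $\Gamma:=R^*$, which is automatically a $w^*$-continuous contraction from $L^2_\sigma(\Sigma^2,N)$ into $B(S^1(L^2(\Sigma)),N)$. The identity $\Gamma(\theta\otimes y)=S_\theta\otimes y$ is verified by pairing both sides with an arbitrary $S_\psi\otimes\eta$ in the predual: unwinding the definitions gives $\iint\theta(s,t)\psi(t,s)\langle y,\eta\rangle\,ds\,dt$ on the left, while $\mathrm{tr}(S_\theta S_\psi)\langle y,\eta\rangle$ on the right equals the same integral by the standard computation of the kernel and trace of a composition of two integral operators. The transposition built into $R_0$ is exactly what makes this matching work.

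It remains to upgrade the target from $B(S^1(L^2),N)$ to $B(L^2(\Sigma))\overline{\otimes}N$. Since $\Gamma$ maps the $w^*$-dense subspace $L^2(\Sigma^2)\otimes N$ into $B(L^2)\otimes N\subset B(L^2)\overline{\otimes}N$, and is $w^*$-continuous, its image lies in the $w^*$-closure of $B(L^2)\otimes N$ inside $B(S^1(L^2),N)$, which by the embedding above sits in $B(L^2(\Sigma))\overline{\otimes}N$. The main technical hurdle is precisely this final embedding: one needs to verify that the canonical map $S^1(L^2)\hat{\otimes} N_*\to (B(L^2)\overline{\otimes}N)_*$ has dense image (so that its dual is an isometric $w^*$-closed embedding), after which the rest of the argument is purely formal. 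The secondary point where care is required is the transposition bookkeeping: if one were to omit it from the definition of $R_0$, the resulting $\Gamma$ would send $\theta\otimes y$ to $S_{\check\theta}\otimes y$ instead of $S_\theta\otimes y$.
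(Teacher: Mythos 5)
Your uniqueness argument and the verification of the formula $\Gamma(\theta\otimes y)=S_\theta\otimes y$ (including the transposition bookkeeping) are fine, but the existence argument has a genuine gap at exactly the point you flag as "the main technical hurdle" and then dismiss. You bound $R_0$, hence $R$, with respect to the Banach-space projective tensor norm on $S^1(L^2(\Sigma))\hat{\otimes}N_*$, and you then claim that because the canonical contraction $\iota: S^1(L^2(\Sigma))\hat{\otimes}N_*\to (B(L^2(\Sigma))\overline{\otimes}N)_*$ has dense range, its dual is an \emph{isometric} $w^*$-embedding. Dense range only gives that $\iota^*$ is injective and contractive; isometry of $\iota^*$ would require $\iota$ to be a metric surjection, and this fails for non-commutative $N$. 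Indeed $(S^1(L^2(\Sigma))\hat{\otimes}N_*)^*=B(S^1(L^2(\Sigma)),N)$ identifies with \emph{bounded} bilinear forms on $S^1\times N_*$, whereas $(B(L^2(\Sigma))\overline{\otimes}N)$ corresponds to the \emph{completely} bounded ones (the predual is the operator-space projective tensor product, not the Banach one); already for $N=M_2$ and $L^2(\Sigma)=\ell^2_2$ the two norms on $M_2\otimes M_2$ differ. Consequently, from $\Vert R^*(F)\Vert_{B(S^1,N)}\leq\Vert F\Vert$ you cannot conclude $\Vert R^*(F)\Vert_{B(L^2(\Sigma))\overline{\otimes}N}\leq\Vert F\Vert$ --- the inequality between the two norms goes the wrong way --- and even the claim that $R^*(F)$ lies in $B(L^2(\Sigma))\overline{\otimes}N$ at all is not secured, since a net in $B(L^2(\Sigma))\otimes N$ that is bounded and $w^*$-convergent in $B(S^1(L^2(\Sigma)),N)$ need not be bounded in the von Neumann norm.

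This missing estimate is precisely the content of the paper's proof. There, the map $L$ is defined only on ${\mathcal E}\otimes{\mathcal E}\otimes N_*$ and is estimated directly against the norm $|\Vert\cdot\Vert|$ induced by $(B(L^2(\Sigma))\overline{\otimes}N)_*$: one picks, by duality in $\ell^2(N^*)$, elements $m_{ij}\in N$ with $\sum_{i,j}\Vert m_{ij}\Vert^2\leq 1$ witnessing $\Vert L(\Phi)\Vert_{L^2(\Sigma^2,N_*)}$, assembles them into a test element $\Psi\in B(L^2(\Sigma))\overline{\otimes}N$, and proves $\Vert\Psi\Vert_{B(L^2(\Sigma))\overline{\otimes}N}\leq 1$ by a concrete Cauchy--Schwarz computation on $L^2(\Sigma,K)$ after embedding $N\subset B(K)$. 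That computation is the real work of the lemma, and it is what your argument replaces by an unproved (and in general false) claim. To repair your proof you would need to show directly that $R$, restricted to $S^1(L^2(\Sigma))\otimes N_*$, is contractive for the norm induced by $(B(L^2(\Sigma))\overline{\otimes}N)_*$ --- which brings you back to the paper's estimate.
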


\begin{proof}
Recall that $L^2_\sigma(\Sigma^2,N)=L^2(\Sigma^2,N_*)^*$, by Theorem
\ref{th Lpsigma predual Lq }. Let $|\|.\||$ denote the norm on
$S^1(L^2(\Sigma))\otimes N_{*}$ induced by $(B(L^2(\Sigma))\overline{\otimes}N)_*$.
Recall ${\mathcal E}\subset L^2(\Sigma)$ defined by (\ref{E}). 
Then under the identification
(\ref{S1}), $L^2(\Sigma)\otimes L^2(\Sigma)\otimes N_*$ is dense in
$(B(L^2(\Sigma))\overline{\otimes}N)_*$, hence 
${\mathcal E}\otimes {\mathcal E} \otimes N_*$ is dense in
$(B(L^2(\Sigma))\overline{\otimes}N)_*$.

Let $L:{\mathcal E}\otimes {\mathcal E}\otimes N_*\to L^2(\Sigma^2,N_*)$ 
be the linear mapping such that for all $u, v\in {\mathcal E}$ and $\eta\in N_*$, 
\[ 
\bigl[L(u\otimes v\otimes \eta)\bigr] (s,t) =\,u(s)v(t)\eta,
\qquad (s,t)\in\Sigma^2.
\]
Let $\Phi\in{\mathcal E}\otimes {\mathcal E}\otimes N_*$. It can be written
as $\Phi=\sum\limits_{i,j=1}^{M}
\chi_{E_i}\otimes\chi_{F_j}\otimes\eta_{ij},$ 
where $M\geq 1$ is an integer, $E_1,\ldots E_{M}$ are pairwise disjoint elements
such that $0<\vert E_i\vert <\infty$ for all $i$,
$F_1,\ldots F_{M}$ are pairwise disjoint elements
such that $0<\vert F_j\vert <\infty$ for all $j$, and $\eta_{ij}\in N_{*}$.
We define, for all $i,j=1,\ldots,M$, 
$z_{ij}=|E_i|^\frac{1}{2}|F_j|^\frac{1}{2}\eta_{ij}\in N_*$. 
By classical duality, there exists a family 
$(m_{ij})_{1\leq i,j\leq M}\subset N$ such that 
$$
\left(\sum_{i,j=1}^M\|z_{ij}\|^2\right)^{\frac{1}{2}}=
\sum_{i,j=1}^M\left\langle z_{ij},m_{ij}\right\rangle 
\qquad\text{ and }\qquad 
\sum_{i,j=1}^M\|m_{ij}\|^2\leq 1.
$$
Thanks to the disjointness of the $E_i$ and of the $F_j$, we have
\begin{align} \label{sum}
\|L(\Phi)\|^2_{L^2(\Sigma^2,N_*)}=
\sum_{ij}\|\eta_{ij}\|^2|E_i||F_j|=\sum_{i,j}\|z_{ij}\|^2  .
\end{align}
We now define 
$$
\Psi=\sum\limits_{i,j=1}^M\dfrac{\chi_{E_i}}{|E_i|^\frac{1}{2}}\otimes \dfrac{\chi_{F_j}}{|F_j|^\frac{1}{2}}\otimes m_{ij}\in L^2(\Sigma)\otimes 
L^2(\Sigma)\otimes N\subset B(L^2(\Sigma))\overline{\otimes}N.
$$
We have
$$
\left\langle \Phi,\Psi\right\rangle = \sum_{i,j} 
\left\langle z_{ij},m_{ij}\right\rangle = \sum_{i,j}\|z_{ij}\|^2
=\|L(\Phi)\|_{L^2(\Sigma^2,N_*)},
$$
by $\eqref{sum}$. Hence
$$
\|L(\Phi)\|_{L^2(\Sigma^2,N_*)}\leq \|\Psi\|_{B(L^2(\Sigma))\overline{\otimes}N}|\|\Phi\||.
$$
We will prove below that 
\begin{equation}\label{Goal}
\|\Psi\|_{B(L^2(\Sigma))\overline{\otimes}N}\leq 1.
\end{equation}
Taking this for granted, we obtain that $L$ extends to a contraction (still denoted by)
$$
L\colon \left(B(L^2(\Sigma))\overline{\otimes}N\right)_*
\longrightarrow L^2(\Sigma^2,N_*).
$$
Its adjoint $\Gamma=L^*$ is a $w^*$-continuous contraction
from $L^2_\sigma(\Sigma^2,N)$ into $B(L^2(\Sigma))\overline{\otimes}N$
and it is easy to check that $\Gamma(\theta\otimes y)=S_\theta\otimes y$
for all $\theta\in L^2(\Sigma^2)$ and $y\in N$. This proves the existence result.
The uniqueness comes from the $w^*$-continuity of $\Gamma$ 
and the $w^*$-density 
of $L^2(\Sigma^2)\otimes N$.

It therefore remains to check (\ref{Goal}). Let $K$ be a Hilbert space
such that $N\subset B(K)$ as a von Neumann algebra. Then
$$
B(L^2(\Sigma))\overline{\otimes}N\subset 
B(L^2(\Sigma))\overline{\otimes}B(K)\,\simeq\, B(L^2(\Sigma,K)).
$$
Let $\xi,\zeta\in L^2(\Sigma,K)$. Then 
$$
\langle\Psi(\xi),\zeta\rangle\,=\,
\sum_{i,j=1}^M \,\frac{1}{\vert E_i\vert^\frac12\vert F_j\vert^\frac12}\,
\int_{E_i\times F_j} \langle m_{ij}(\xi(t)),\zeta(s)\rangle dtds.
$$
Hence by Cauchy-Schwarz,
\begin{align*}
\vert\langle\Psi(\xi),\zeta\rangle\vert \,
&\leq\,
\sum_{i,j=1}^M \,\frac{1}{\vert E_i\vert^\frac12\vert F_j\vert^\frac12}\,
\Vert m_{ij}\Vert \Bigl(\int_{E_i}\Vert \xi(t)\Vert dt\Bigr)
\Bigl(\int_{F_j}\Vert \zeta(s)\Vert ds\Bigr)\\
&\leq\,
\biggl(\sum_{i,j=1}^M \Vert m_{ij}\Vert^2\biggr)^\frac12
\biggl(\sum_{i,j=1}^M\frac{1}{\vert E_i\vert\vert F_j\vert}
 \Bigl(\int_{E_i}\Vert \xi(t)\Vert dt\Bigr)^2
\Bigl(\int_{F_j}\Vert \zeta(s))\Vert ds\Bigr)^2\biggr)^\frac12\\
&=\,
\biggl(\sum_{i,j=1}^M \Vert m_{ij}\Vert^2\biggr)^\frac12
\biggl(\sum_{i}^M\frac{1}{\vert E_i\vert}
\Bigl(\int_{E_i}\Vert \xi(t)\Vert dt\Bigr)^2
\biggr)^\frac12\biggl(\sum_{j=1}^M\frac{1}{\vert F_j\vert}
\Bigl(\int_{F_j}\Vert \zeta(s))\Vert ds\Bigr)^2\biggr)^\frac12.
\end{align*}
By Cauchy-Schwarz again we have
$$
\frac{1}{\vert E_i\vert}
\Bigl(\int_{E_i}\Vert \xi(t)\Vert dt\Bigr)^2\,\leq\,\int_{E_i}
\Vert \xi(t)\Vert^2 dt
\qquad\hbox{and}\qquad
\frac{1}{\vert F_j\vert}
\Bigl(\int_{F_j}\Vert \zeta(s)\Vert dt\Bigr)^2\,\leq\,\int_{F_j}
\Vert \zeta(s)\Vert^2 ds
$$
for all $i,j=1,\ldots, M$. We derive
$$
\vert\langle\Psi(\xi),\zeta\rangle\vert \,\leq
\biggl(\int_{\Sigma}\Vert\xi(t)\Vert^2dt\biggl)^\frac12
\biggl(\int_{\Sigma}\Vert\zeta(s)\Vert^2ds\biggl)^\frac12,$$
which proves (\ref{Goal}). 
\end{proof}

In the rest of this section we give some background on 
antisymmetric Fock spaces and Fermions. 
We refer to \cite{Reffermions1} and \cite{Reffermions2} for more details
and information.
Let $H$ be a real Hilbert space and we let $H_\C$
denote its complexification. For any integer $n\geq 1$, we let 
$\Lambda_n(H_\C)$ denote the $n$-fold antisymmetric space
over $H_\C$, equipped with the inner product defined by 
\begin{align*}
\langle h_1\wedge \cdots \wedge h_n,k_1\wedge \cdots \wedge k_n\rangle_{-1}=\det \left[\langle h_i,k_j\rangle_{H_\C}\right],
\qquad h_i,k_j\in H_\C.
\end{align*}
We also set $\Lambda_0(H_\C)=\C$. 
The antisymmetric Fock space over $H_\C$ is the Hilbertian
direct sum
\begin{align*}
\mathcal{F}_{-1}(H)\,=\,\bigoplus_{n\geq 0} \Lambda_n(H_\C).
\end{align*}
We let $\Omega$
be a fixed unit element of $\Lambda_0(H_\C)$.

For all $e\in H$, we recall the creation operator
$l(e):\mathcal{F}_{-1}(H)\to\mathcal{F}_{-1}(H)$
satisfying $l(e)\Omega=e$ and 
$l(e)(h_1\wedge \cdots \wedge h_n) =e\wedge
h_1\wedge \cdots \wedge h_n$
for all $h_1,\ldots,h_n\in H_\C$. 
These operators satisfy the following relation:
\begin{align}\label{CAR}
l(f)^*l(e)+l(e)l(f)^*=\langle f,e\rangle_H Id_{\mathcal{F}_{-1}(H)},
\qquad e,f\in H.
\end{align}
Let $\omega(e):\mathcal{F}_{-1}(H)\to \mathcal{F}_{-1}(H)$ 
be the self-adjoint operator
\begin{align*}
\omega(e)=l(e)+l(e)^*.
\end{align*}
It follows from (\ref{CAR}) that 
\begin{align}\label{eq équilité norme}
\omega(e)^2 = \Vert e\Vert^2_H Id_{\mathcal{F}_{-1}(H)},
\qquad e\in H.
\end{align}
By definition, 
$$
\Gamma_{-1}(H)\subset B(\mathcal{F}_{-1}(H))
$$
is the von Neumann algebra generated by 
$\lbrace \omega(e);$ $e\in H\rbrace$.
This is a finite von Neumann algebra equipped with a trace
$\tau$ defined by $\tau(x)=\langle x\Omega, \Omega\rangle_{\mathcal{F}_{-1}(H)}$ for
all $x\in \Gamma_{-1}(H)$. Clearly we have
\begin{equation}\label{Trace}
\tau(\omega(e)\omega(f))=\langle e,f\rangle_H,
\qquad e,f\in H.
\end{equation}
The space $\Gamma_{-1}(H)$ is called the 
Fermion algebra over $H$.

Let $H$ and $K$ be two real Hilbert spaces and 
let $T:H\to K$  be a contraction with complexification 
$T_\C:H_\C\to K_\C$. There exists a necessarily
unique linear contraction
$$
F_{-1}(T):\mathcal{F}_{-1}(H)\longrightarrow\mathcal{F}_{-1}(K)
$$
such that $F_{-1}(T)\Omega=\Omega$ and
$F_{-1}(T)(h_1\wedge\cdots\wedge h_n)=T_\C(h_1)\wedge \cdots \wedge T_\C(h_n)$ for all
$h_1,\ldots, h_n\in H_\C$. Next, 
there exists a necessarily unique normal, unital, 
completely positive and trace preserving map 
$$
\Gamma_{-1}(T):\Gamma_{-1}(H)\longrightarrow \Gamma_{-1}(H)
$$
such that for every $x\in \Gamma_{-1}(H)$, we have:
\begin{align*}
(\Gamma_{-1}(T)(x))\Omega=F_{-1}(T)(x\Omega).
\end{align*} 
If further $T:H\to K$ is a isometry (resp. an onto 
isometry), then $\Gamma_{-1}(T)$ is a one-to-one $\ast$-homomorphism
(resp. a one-to-one $\ast$-isomomorphism).

In particular we have
\begin{equation}\label{omega}
[\Gamma_{-1}(T)](\omega(e))= \omega(T(e)),
\qquad e\in H.
\end{equation}

In the next lemma, we consider an integer $k\geq 1$
and we let $\mathcal{P}_2(2k)$ be
the set of 2-partitions of the set $\lbrace 1,2,
\dots,2k\rbrace$.
Then for any  $\nu\in \mathcal{P}_2(2k)$, we 
let $c(\nu)$ denote the number of crossings of $\nu$.
We refer to \cite{Effros} for details.
According to Corollary 2.1 in the latter paper,
we have the following lemma, in which 
(2) is a straightforward consequence of (1).

\begin{lemme}\label{prop trace w w avec produit scalaire}
Let $(f_i)_{i=1}^{2k}$ be a family of $H$.
\begin{enumerate}
\item [(1)] We have
\begin{align*}
\tau(\omega(f_1)\cdots\omega(f_{2k}))=
\sum_{\nu\in\mathcal{P}_2(2k)}(-1)^{c(\nu)}\prod_{(i,j)\in \nu}\langle f_i,f_j\rangle_H.
\end{align*}
\item [(2)] If  for all $1\leq i<j\leq 2k$ such that $j\neq 2k-i+1$, we have $\langle f_i,f_j\rangle_H=0$, then
\begin{align*}
\tau(\omega(f_1)\cdots\omega(f_{2k}))=\langle f_1,f_{2k}\rangle_H\cdots\langle f_k,f_{k+1}\rangle_H
\end{align*}
\end{enumerate}
\end{lemme}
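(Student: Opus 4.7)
My plan is to treat the two parts separately, observing that (1) is the substantive Wick--Isserlis type identity for Fermions, while (2) is essentially a combinatorial consequence.

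For (1), the statement is precisely Corollary 2.1 of \cite{Effros}, so I would simply invoke it. If one wished to reprove it from scratch, the natural route is to expand each $\omega(f_i) = l(f_i) + l(f_i)^*$ and view the product $\omega(f_1)\cdots\omega(f_{2k})$ as a sum of $2^{2k}$ monomials in creation and annihilation operators. Using the CAR relation (\ref{CAR}) together with $l(e)l(f)+l(f)l(e)=0$ and its adjoint, one puts each monomial into a normal-ordered form. The key point is that only those monomials in which every annihilation operator is paired with a creation operator on its right contribute to $\tau(\cdot) = \langle\,\cdot\,\Omega,\Omega\rangle$, because $l(e)^*\Omega = 0$. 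The surviving configurations are indexed by the pair partitions $\nu \in \mathcal{P}_2(2k)$, and a careful tracking of signs through the anticommutations produces exactly the factor $(-1)^{c(\nu)}$.

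For (2), I would use (1) as a black box. By the orthogonality hypothesis, the only index pairs $(i,j)$ with $i<j$ and $\langle f_i,f_j\rangle_H \neq 0$ are the ones with $j = 2k-i+1$. Hence in the sum
\begin{align*}
\tau(\omega(f_1)\cdots\omega(f_{2k}))=\sum_{\nu\in\mathcal{P}_2(2k)}(-1)^{c(\nu)}\prod_{(i,j)\in \nu}\langle f_i,f_j\rangle_H,
\end{align*}
the only partition whose product can be nonzero is the fully nested pairing
\begin{align*}
\nu_0 = \bigl\{(1,2k),(2,2k-1),\ldots,(k,k+1)\bigr\}.
\end{align*}
It remains to verify that $c(\nu_0)=0$. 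For any $1\leq i<j\leq k$, the two pairs $(i,2k-i+1)$ and $(j,2k-j+1)$ satisfy $i<j<2k-j+1<2k-i+1$, so the second pair is nested inside the first and the two do not cross. Hence $c(\nu_0)=0$ and the sum collapses to $\prod_{i=1}^{k}\langle f_i,f_{2k-i+1}\rangle_H$, which is the advertised formula.

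The only real difficulty lies in (1), whose combinatorial proof is where the sign $(-1)^{c(\nu)}$ must be accounted for; since I defer this to \cite{Effros}, what is left is the transparent observation on nested intervals above, which is essentially routine.
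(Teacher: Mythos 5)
Your proposal is correct and follows essentially the same route as the paper, which likewise obtains (1) by citing Corollary 2.1 of \cite{Effros} and treats (2) as a straightforward consequence. Your explicit verification that the only surviving pair partition is the fully nested one $\bigl\{(i,2k-i+1)\bigr\}_{i=1}^{k}$ with $c(\nu_0)=0$ is exactly the routine argument the paper leaves implicit.
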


\section{Properties of measurable Schur multipliers}
This section is devoted to preliminaries on measurable Schur multipliers
and characterizations of some of their possible properties (positivity, unitality,
etc.). Throughout we let $\Sigma$ be a $\sigma$-finite
measure space. Given any $\varphi\in L^\infty(\Sigma^2)$, recall
$M_\varphi: S^2(L^2(\Sigma))\to S^2(L^2(\Sigma))$ defined by
(\ref{Schur}). It is plain that 
\begin{align}\label{lem norm phi}
\|M_\varphi\|_{S^2(L^2(\Sigma))}=\|\varphi\|_{L^\infty(\Sigma^2)}.
\end{align}
We recall that $\varphi$ (or $M_\varphi$) is called a 
Schur multiplier on $B(L^2(\Sigma))$ if it satisfies
(\ref{Bounded}) for some $C>0$.

In the sequel, we set $\widecheck{\varphi}(s,t)=\varphi(t,s)$
for $(s,t)\in\Sigma^2$.

\begin{lemme} \label{th equivalent schur multipliers}
Let $\varphi\in L^\infty(\Sigma^2)$, 
the following assertions are equivalent:
\begin{enumerate}
\item $\varphi$ is a Schur multiplier on $B(L^2(\Sigma))$;
\item $M_\varphi$ extends to a $w^*$-continuous operator $M_\varphi^\infty:B(L^2(\Sigma))\to B(L^2(\Sigma))$;
\item $M_\varphi$ restricts to a bounded operator $M_\varphi^1:S^1(L^2(\Sigma))\to S^1(L^2(\Sigma))$;
\item $M_\varphi$ extends to a  bounded operator $M_\varphi^0:K(L^2(\Sigma))\to K(L^2(\Sigma))$, where $K(L^2(\Sigma))$ denotes the Banach space
of compact operators on $L^2(\Sigma)$;
\item $\widecheck{\varphi}$ verifies (3).
\end{enumerate}
In this case, $M_{\widecheck{\varphi}}^1=\left(M_\varphi^0\right)^*$, $\left(M_{\widecheck{\varphi}}^1\right)^*=M_\varphi^\infty$ and $\|M_\varphi^1\|=\|M_{\widecheck{\varphi}}^1\|=\|M_\varphi^\infty\|=\|M_\varphi^0\|$.\\
\end{lemme}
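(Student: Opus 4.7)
The plan is to organise everything around two ideas: trace duality between $M_\varphi$ and $M_{\dot\varphi}$, and a kernel-transposition symmetry. The central computation, valid for finite-rank operators $S_f, S_g$, is
\[
\operatorname{tr}\!\bigl(M_\varphi(S_f)\,S_g\bigr)\,=\,\iint \varphi(s,u)f(s,u)g(u,s)\,du\,ds\,=\,\operatorname{tr}\!\bigl(S_f\, M_{\dot\varphi}(S_g)\bigr),
\]
which identifies $M_\varphi$ and $M_{\dot\varphi}$ as mutual adjoints with respect to the trace pairing.

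For the main chain $(1)\Leftrightarrow(2)\Leftrightarrow(4)\Leftrightarrow(5)$, I would proceed cyclically. \emph{$(1)\Rightarrow(4)$:} finite-rank operators lie in $S^2(L^2(\Sigma))$ and are $\|\cdot\|_B$-dense in $K(L^2(\Sigma))$, so the boundedness in (1) yields a unique bounded extension $M_\varphi^0:K\to K$. \emph{$(4)\Rightarrow(5)$:} take the Banach adjoint $(M_\varphi^0)^*:S^1\to S^1$; by the trace identity above, this adjoint coincides with $M_{\dot\varphi}$ on the dense subspace of finite-rank operators in $S^1$, hence equals $M_{\dot\varphi}^1$. \emph{$(5)\Rightarrow(2)$:} take one more adjoint $(M_{\dot\varphi}^1)^*:B\to B$, which is automatically $w^*$-continuous. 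The trace identity again provides agreement with $M_\varphi$ on finite-rank operators; for a general $S_f\in S^2$, approximate by finite-rank $S_{f_n}$ in $S^2$-norm and exploit the contractive inclusion $S^2\hookrightarrow B$ to transfer the convergence into the $w^*$-topology on both sides, forcing the extension to coincide with $M_\varphi$ on all of $S^2$. \emph{$(2)\Rightarrow(1)$:} immediate by restriction.

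For $(3)\Leftrightarrow(5)$, the idea is that the kernel transposition $S_f\mapsto S_{\dot f}$ is a bijective isometry on each of $B$, $K$, $S^1$ and $S^2$, satisfying the intertwining $t\circ M_\varphi=M_{\dot\varphi}\circ t$ (checked directly on kernels via $(\varphi f)\dot{\phantom{f}}=\dot\varphi\,\dot f$). Hence $M_\varphi$ is $S^1$-bounded iff $M_{\dot\varphi}$ is, yielding $(3)\Leftrightarrow(5)$ together with the equality $\|M_\varphi^1\|=\|M_{\dot\varphi}^1\|$. The remaining duality identities $M_{\dot\varphi}^1=(M_\varphi^0)^*$ and $(M_{\dot\varphi}^1)^*=M_\varphi^\infty$, and the equality of all four operator norms, drop out of the two adjoint steps above.

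The main obstacle I anticipate is in $(5)\Rightarrow(2)$: one first produces an abstract $w^*$-continuous operator on $B$ and must verify it genuinely extends the originally prescribed $M_\varphi$ defined on $S^2$. Agreement on finite-rank operators is immediate from the trace identity, but propagating this to all of $S^2$ requires juggling three distinct topologies ($S^2$-norm, $B$-norm and the $w^*$-topology) and exploiting the contractive inclusion $S^2\hookrightarrow B$ to make strong and weak limits coincide.
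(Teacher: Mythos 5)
Your proposal is correct and rests on exactly the two ingredients of the paper's own proof: the trace-duality identity $\operatorname{tr}(S_{\varphi f}S_h)=\operatorname{tr}(S_fS_{\dot\varphi h})$ to realise $M_\varphi$ and $M_{\dot\varphi}$ as mutual adjoints (giving $(4)\Rightarrow(5)\Rightarrow(2)$), and the kernel-flip isometry to get $(3)\Leftrightarrow(5)$. The only difference is cosmetic — you arrange $(1)\Rightarrow(4)\Rightarrow(5)\Rightarrow(2)\Rightarrow(1)$ as a clean cycle where the paper treats $(1)\Leftrightarrow(4)$ and $(2)\Rightarrow(4)$ separately — and your care about identifying the abstract adjoint with $M_\varphi$ on all of $S^2$ via the contractive inclusion $S^2\hookrightarrow B$ is exactly the right (routine) density argument.
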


\begin{proof}
The flip mapping $\sum x_k\otimes y_k\mapsto \sum y_k\otimes x_k$ extends to 
an isometric automorphism $\rho :L^2(\Sigma)\hat{\otimes}L^2(\Sigma)
\to L^2(\Sigma)\hat{\otimes}L^2(\Sigma)$. 
Using this notation and (\ref{S1}), it is easy to check 
that 
$$
M_{\widecheck{\varphi}}(S_f) = \rho\bigl(M_\varphi(S_{\rho(f)})\bigr),
\qquad f\in L^2(\Sigma)\otimes L^2(\Sigma),
$$
provided that property 3 holds true. We easily deduce that 
$5\Leftrightarrow 3$.

Let $\text{tr}$ denote the usual trace on $B(L^2(\Sigma))$.
We remark that $S_f^*=S_{\overline{\dot{f}}}$, for any $f\in L^2(\Sigma^2)$. Hence the equality $\|S_f\|^2=\|f\|^2$ reads
$\text{tr}(S_fS_{\overline{\dot{f}}})=\int f\overline{f}$. 
By polarization, we obtain that for all $f,h\in L^2(\Sigma^2)$, 
$\text{tr}(S_fS_h)=\int f\dot{h}$.
Consequently,
\begin{align} \label{egalite dualite}
\text{tr}\left(S_{\varphi f}S_h\right)=\int_{\Sigma^2}
\varphi(s,t)f(s,t)h(t,s)dsdt=\text{tr}\left(S_fS_{\widecheck{\varphi} h}\right),
\qquad f,h\in L^2(\Sigma^2).
\end{align}
This yields $4\Rightarrow 5\Rightarrow 2$.

By definition, $M_\varphi(S^2)\subset S^2\subset K$,
hence $1\Leftrightarrow 4$ follows from the definition.
Likewise we have $2\Rightarrow 4$ by taking 
restriction. This concludes the proof of the equivalence
of the five properties. The rest of the statement follows from 
the above arguments.
\end{proof}

Schur multipliers on $B(L^2(\Sigma))$ are characterized as follows.
We refer to \cite[theorem 3.3]{Spronk} for this statement.
 
\begin{theorem} \label{theoreme multi Schur alpha beta}
A function $\varphi\in L^\infty(\Sigma^2)$
is a Schur multiplier on $B(L^2(\Sigma))$ if and only if there exist
a Hilbert space $H$ and two functions 
$\alpha$ and $\beta$ in $L^\infty(\Sigma,H)$ such that
\begin{align}\label{equa alpha beta}
\varphi(s,t)=\left\langle\alpha(s),\beta(t)\right\rangle_H\text{ a.e. on }\Sigma\times\Sigma. 
\end{align}
\end{theorem}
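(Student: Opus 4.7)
The easier direction is $(\Leftarrow)$. Assume $\varphi(s,t)=\langle\alpha(s),\beta(t)\rangle_H$ and introduce the pointwise multiplication operators $\pi_\alpha, \pi_\beta \colon L^2(\Sigma) \to L^2(\Sigma, H)$ defined by $\pi_\alpha(g)(s) = g(s)\alpha(s)$ and $\pi_\beta(g)(s) = g(s)\beta(s)$. These are bounded with norms at most $\|\alpha\|_\infty$ and $\|\beta\|_\infty$ respectively, and their adjoints are given by $\pi_\alpha^*(F)(s) = \langle F(s), \alpha(s)\rangle_H$ and similarly for $\pi_\beta^*$. A direct computation on kernel operators $S_f$ with $f \in L^2(\Sigma) \otimes L^2(\Sigma)$, using Fubini, shows that
$$
\pi_\beta^*(S_f \otimes I_H)\pi_\alpha = M_{\dot\varphi}(S_f).
$$
Since by Lemma \ref{th equivalent schur multipliers}, $\varphi$ is a Schur multiplier iff $\dot\varphi$ is, this factorization through $B(L^2(\Sigma, H))$ yields the required boundedness of $M_\varphi$, with norm at most $\|\alpha\|_\infty\|\beta\|_\infty$.

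For $(\Rightarrow)$, I would proceed via a bimodule representation theorem. By Lemma \ref{th equivalent schur multipliers}, $M_\varphi$ extends to a $w^*$-continuous bounded operator $M_\varphi^\infty$ on $B(L^2(\Sigma))$. A direct verification on rank-one operators shows that $M_\varphi^\infty$ commutes with left and right multiplication by any $m \in L^\infty(\Sigma)$ viewed as a multiplication operator in $B(L^2(\Sigma))$; equivalently, $M_\varphi^\infty$ is a $w^*$-continuous $L^\infty(\Sigma)$-bimodule map. Invoking a Stinespring-type structure theorem for normal $L^\infty(\Sigma)$-bimodule maps on $B(L^2(\Sigma))$, one produces a Hilbert space $H$ and bounded $L^\infty(\Sigma)$-module maps $V, W \colon L^2(\Sigma) \to L^2(\Sigma, H)$ such that $M_\varphi^\infty(x) = V^*(x \otimes I_H) W$ for every $x$. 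Any bounded $L^\infty(\Sigma)$-module map $L^2(\Sigma) \to L^2(\Sigma, H)$ is necessarily of the form $g \mapsto g\alpha$ for some $\alpha \in L^\infty(\Sigma, H)$, so we extract $\alpha, \beta \in L^\infty(\Sigma, H)$. Reading off the integral kernel of $M_\varphi^\infty(S_f)$ (for $f$ ranging over an $S^2$-dense subspace) and identifying it with $\varphi f$ gives $\varphi(s,t) = \langle\alpha(s),\beta(t)\rangle_H$ a.e.

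The main obstacle is clearly the converse direction, specifically the justification of the bimodule Stinespring factorization in the genuinely measurable (non-atomic, non-discrete) setting. The discrete analogue is essentially Grothendieck's inequality, but passing to general $\sigma$-finite measure spaces requires careful disintegration of Hilbert modules and measurable-selection arguments to ensure that the module-map representation produces honest $L^\infty(\Sigma,H)$ functions rather than merely $w^*$-measurable ones; this is the substantive technical work carried out in Spronk's paper, which is why we cite the result rather than attempt to reprove it here.
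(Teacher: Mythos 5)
Your proposal is essentially in the same position as the paper: the paper does not prove this theorem at all, it simply cites Spronk (Theorem 3.3 there) and remarks that one can alternatively deduce it from the finite-dimensional case by approximation (Paulsen, Lafforgue--de la Salle); the only direct argument the paper supplies is the identity \eqref{eq juste alpha} in Section 4, which produces the factorization in the special case where $M_\varphi$ is positive (with $\alpha=\beta$), not in general. Your sufficiency direction is complete and correct: the factorization $\pi_\beta^*(S_f\otimes I_H)\pi_\alpha=M_{\dot\varphi}(S_f)$ checks out (up to the harmless conjugation conventions), and combined with the equivalence $1\Leftrightarrow 5$ of Lemma \ref{th equivalent schur multipliers} it gives $\Vert M_\varphi\Vert\leq\Vert\alpha\Vert_\infty\Vert\beta\Vert_\infty$, which the paper never writes down. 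For necessity, your outline is the standard masa-bimodule route, but as stated it has one gap and one correctly identified deferral. The gap: the representation theorem for normal bimodule maps you invoke requires \emph{complete} boundedness, whereas the hypothesis only gives boundedness of $M_\varphi^\infty$; you need Smith's automatic complete boundedness theorem for bounded $w^*$-continuous masa-bimodule maps on $B(L^2(\Sigma))$ before the Stinespring-type factorization applies. The deferral: you are right that the genuine difficulty is upgrading the $w^*$-measurable fields produced by the module-map representation to honest Bochner-measurable elements of $L^\infty(\Sigma,H)$, and since you end by citing Spronk for exactly this point, your argument ultimately rests on the same external reference as the paper's. In short: more informative than the paper's treatment on the easy direction, equivalent to it (a citation) on the hard one.
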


In the rest of this section, 
we assume that $\varphi\in L^\infty(\Sigma^2)$ is a Schur multiplier on $B(L^2(\Sigma))$. The argument in the proof of 
Lemma \ref{th equivalent schur multipliers} shows that the Hilbertian
adjoint of 
$M_\varphi:S^2(L^2(\Sigma))\to S^2(L^2(\Sigma))$ is equal to $M_{\overline{\varphi}}$. We can therefore characterize 
the self-adjointness of a Schur multiplier as follows (see \cite[proposition 4.2]{A3}).

\begin{prop}
The operator $M_\varphi:S^2(L^2(\Sigma))\to S^2(L^2(\Sigma))$ 
is self-adjoint if and only if $\varphi$ is real-valued.
\end{prop}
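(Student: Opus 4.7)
The proof is very short once the preceding observation is in hand. The paragraph introducing the proposition already notes that the Hilbertian adjoint of $M_\varphi\colon S^2(L^2(\Sigma))\to S^2(L^2(\Sigma))$ is $M_{\overline{\varphi}}$. This identity is immediate from the definition: since $f\mapsto S_f$ is a unitary between $L^2(\Sigma^2)$ and $S^2(L^2(\Sigma))$, the operator $M_\varphi$ is unitarily equivalent to pointwise multiplication by $\varphi$ on $L^2(\Sigma^2)$, and the Hilbertian adjoint of multiplication by $\varphi$ is multiplication by $\overline{\varphi}$.

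Given this, my plan is a two-line argument. First, I would rewrite the self-adjointness condition as $M_\varphi=M_{\overline{\varphi}}$, equivalently $M_{\varphi-\overline{\varphi}}=0$ on $S^2(L^2(\Sigma))$. Second, I would invoke the norm equality (\ref{lem norm phi}), which gives
\[
\|M_{\varphi-\overline{\varphi}}\|_{S^2(L^2(\Sigma))\to S^2(L^2(\Sigma))}
=\|\varphi-\overline{\varphi}\|_{L^\infty(\Sigma^2)}.
\]
Thus $M_{\varphi-\overline{\varphi}}=0$ if and only if $\varphi=\overline{\varphi}$ almost everywhere, i.e. $\varphi$ is real valued. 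This proves both implications simultaneously.

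There is essentially no obstacle in this argument; the content has already been packaged in the preceding lemma (the computation of the adjoint via (\ref{egalite dualite})) and in the identity (\ref{lem norm phi}) (which in fact amounts to observing that $M_\varphi$ is unitarily equivalent to multiplication by $\varphi$ on $L^2(\Sigma^2)$, a multiplication operator whose norm is $\|\varphi\|_\infty$). The only thing to emphasize in the write-up is the legitimacy of passing from $M_\varphi=M_{\overline{\varphi}}$ to $M_{\varphi-\overline{\varphi}}=0$, which is justified by the linearity of $\varphi\mapsto M_\varphi$ on $L^\infty(\Sigma^2)$.
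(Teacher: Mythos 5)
Your argument is correct and is exactly the route the paper intends: it derives the proposition from the identity $(M_\varphi)^*=M_{\overline{\varphi}}$ stated in the preceding paragraph, together with linearity of $\varphi\mapsto M_\varphi$ and the norm equality (\ref{lem norm phi}). The paper leaves the proof implicit, and your write-up simply supplies the same two steps in detail.
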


As a complement to Theorem \ref{theoreme multi Schur alpha beta},
we mention the following classical result on Schur multipliers.
We refer to \cite[p. 61]{Pisier} for the notion of complete positivity.

\begin{theorem}\label{th equivalence positif completement positif}
The following are equivalent:
\begin{enumerate}
\item $M_\varphi$ is positive;
\item $M_\varphi$ is completely positive;
\item there exist
a Hilbert space $H$ and a function 
$\alpha\in L^\infty(\Sigma,H)$ such that 
\begin{align*}
\varphi(s,t)=\left\langle\alpha(s),
\alpha(t)\right\rangle_H\text{ a.e. on }\Sigma\times\Sigma. 
\end{align*}
\end{enumerate}
\end{theorem}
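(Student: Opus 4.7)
The proof goes through the loop $(3)\Rightarrow(2)\Rightarrow(1)\Rightarrow(3)$. The implication $(2)\Rightarrow(1)$ is immediate since complete positivity entails positivity. The two substantive steps are $(3)\Rightarrow(2)$, which builds a Stinespring factorization out of the Gram representation, and $(1)\Rightarrow(3)$, which is where the real work lies.

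For $(3)\Rightarrow(2)$, I would proceed as follows. After replacing $H$ by the closed linear span of the essential range of $\alpha$, I may assume $H$ is separable; fix an orthonormal basis $(e_k)$ of $H$ and decompose $\alpha(s)=\sum_k a_k(s)e_k$ with each $a_k\in L^\infty(\Sigma)$ and $\sum_k|a_k(s)|^2\leq\|\alpha\|_\infty^2$ a.e. Define $V:L^2(\Sigma)\to L^2(\Sigma)\otimes H$ by $Vg=\sum_k(\overline{a_k}g)\otimes e_k$; this is bounded with $\|V\|\leq\|\alpha\|_\infty$. A direct calculation on $X=S_f$ shows that $V^*(S_f\otimes 1_H)V$ is the integral operator with kernel $\sum_k a_k(s)\overline{a_k(t)}f(s,t)=\varphi(s,t)f(s,t)=M_\varphi(S_f)$. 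Since both $X\mapsto V^*(X\otimes 1_H)V$ and $M_\varphi^\infty$ are $w^*$-continuous (the latter by Lemma \ref{th equivalent schur multipliers}) and finite-rank operators are $w^*$-dense in $B(L^2(\Sigma))$, one obtains $M_\varphi^\infty(X)=V^*(X\otimes 1_H)V$ on all of $B(L^2(\Sigma))$. This is a Stinespring representation, hence $M_\varphi$ is completely positive.

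For $(1)\Rightarrow(3)$, I would first translate positivity of $M_\varphi$ into positive-definiteness of the kernel. For $h\in L^2(\Sigma)$, the operator $h\otimes\overline h$ is positive with kernel $h(s)\overline{h(t)}$, so $M_\varphi(h\otimes\overline h)\geq 0$ reads
\begin{equation*}
\int\!\!\int_{\Sigma^2}\varphi(s,t)h(s)\overline{h(t)}g(t)\overline{g(s)}\,ds\,dt\geq 0,\qquad g\in L^2(\Sigma).
\end{equation*}
As $h,g$ range over $L^2(\Sigma)$, the product $\psi=h\overline g$ exhausts $L^1(\Sigma)$ (take $h=|\psi|^{1/2}e^{i\arg\psi}$ and $g=|\psi|^{1/2}$), so the inequality becomes $\int\!\!\int\varphi(s,t)\psi(s)\overline{\psi(t)}\,ds\,dt\geq 0$ for every $\psi\in L^1(\Sigma)$. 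Specialising to simple functions yields positive semi-definiteness of every averaged matrix $\bigl[|E_i|^{-1}|E_j|^{-1}\int_{E_i\times E_j}\varphi\bigr]_{i,j}$ associated to pairwise disjoint finite-measure sets $E_1,\ldots,E_n$.

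The final step is a Kolmogorov/GNS construction of the factorization. On the space $\mathcal{E}$ of simple functions, consider the sesquilinear form $\langle\psi_1,\psi_2\rangle_\varphi:=\int\!\!\int\varphi(s,t)\psi_1(s)\overline{\psi_2(t)}\,ds\,dt$; it is positive semi-definite by the preceding step, so quotienting by its null space and completing yields a separable Hilbert space $H$. Each indicator $\chi_E$ of a finite-measure set defines a class $[E]\in H$ with $\|[E]\|_H^2=\int_{E\times E}\varphi\leq|E|^2\|\varphi\|_\infty$. The main obstacle—and the whole content of the theorem beyond the discrete case—is to manufacture from these data a measurable $H$-valued function $\alpha$ on $\Sigma$. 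I would set $\alpha(s):=\lim_{|E|\to 0,\ s\in E}|E|^{-1}[E]$ via a vector-valued Lebesgue differentiation argument in the separable space $H$; the limit then exists almost everywhere, satisfies $\|\alpha(s)\|_H\leq\|\varphi\|_\infty^{1/2}$, and differentiating the two-variable averages gives $\langle\alpha(s),\alpha(t)\rangle_H=\varphi(s,t)$ a.e.\ on $\Sigma^2$. Handling the joint measurability on the product and the exceptional null set is the delicate point; an alternative route is to apply the Spronk factorization of Theorem \ref{theoreme multi Schur alpha beta} first, producing $\varphi(s,t)=\langle\alpha(s),\beta(t)\rangle$ with $\alpha,\beta\in L^\infty(\Sigma,H')$, and then use the Hermitianity and positivity forced by $(1)$ to symmetrize and replace $\beta$ by $\alpha$ up to a partial isometry.
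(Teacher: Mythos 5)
Your architecture -- $(3)\Rightarrow(2)$ by exhibiting a Stinespring form $X\mapsto V^*(X\otimes 1_H)V$, and $(1)\Rightarrow(3)$ by converting positivity into positive-definiteness of the integrated kernel $\psi\mapsto\int\!\!\int\varphi(s,t)\psi(s)\overline{\psi(t)}$ on $L^1(\Sigma)$ followed by a GNS construction -- is sound, and it is essentially the route the paper takes: its ``direct proof'' of $(1)\Rightarrow(3)$ is the identity \eqref{eq juste alpha}, obtained in Section 4 from exactly this kind of GNS Hilbert space built on $L^1$. Your computation for $(3)\Rightarrow(2)$ is correct (note only that the identity $M_\varphi(S_f)=V^*(S_f\otimes 1_H)V$ on $S^2$ already gives the bound $\|M_\varphi(S)\|\leq\|V\|^2\|S\|$, so one does not need to presuppose that $\varphi$ is a Schur multiplier before invoking Lemma \ref{th equivalent schur multipliers}).

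The genuine gap is the step you yourself flag as delicate: producing the measurable function $\alpha$ from the classes $[E]\in H$. The formula $\alpha(s)=\lim_{|E|\to 0,\,s\in E}|E|^{-1}[E]$ has no meaning on an abstract $\sigma$-finite measure space: there is no metric, no differentiation basis, and no Vitali covering theorem, so ``$|E|\to 0$ with $s\in E$'' cannot be given a sense for which the limit exists a.e. The correct tool -- and the one the paper uses -- is the Radon--Nikod\'ym property of Hilbert spaces. The map $W:\psi\mapsto[\psi]$ is a bounded operator from $L^1(\Sigma)$ into the separable Hilbert space $H$, with $\|W\|\leq\|\varphi\|_\infty^{1/2}$; since $H$ has the RNP, every such operator is representable (corollary IV.1.4 in \cite{Diesel}, after reducing to a probability measure as in Section 4), i.e. there exists $\gamma\in L^\infty(\Sigma,H)$ with $W\psi=\int_\Sigma\psi(t)\gamma(t)\,dt$. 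Substituting into $\langle W\psi_1,W\psi_2\rangle_H=\int\!\!\int\varphi(s,t)\psi_1(s)\overline{\psi_2(t)}\,ds\,dt$ gives $\varphi(s,t)=\langle\gamma(s),\gamma(t)\rangle_H$ a.e., and the joint-measurability issue disappears because $\gamma$ is Bochner measurable. This is the abstract-measure-space surrogate for your differentiation idea; your fallback of symmetrizing the factorization of Theorem \ref{theoreme multi Schur alpha beta} by a partial isometry is also left unproved and becomes unnecessary once the RNP argument is in place.
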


This theorem can be proved 
in two steps. The first one consists in proving it
in the finite dimensional case. This is done e.g.
in \cite[theorem 3.7]{Paulsen}. The second step consists in 
approximating from the finite dimensional case. In  \cite[Theorem 1.7]{Lafforgue}, the authors prove Theorem \ref{theoreme multi Schur alpha beta} using the finite dimensional case. Their
argument shows as well that Theorem \ref{th equivalence positif completement positif} can be deduced from the finite dimensional case. Our work in Section 4 will provide a direct proof of 
Theorem \ref{theoreme multi Schur alpha beta}
(see the proof of Theorem \ref{th equivalence positif completement positif},  
in particularly \eqref{eq juste alpha}).

The following result is apparently new.	
\begin{theorem}\label{th unital et alpha beta}
Let $H$ and $\alpha, \beta\in L^\infty (\Sigma,H)$ 
such that \eqref{equa alpha beta} holds true. Then 
\[ M_\varphi^\infty\text{ is unital }  \iff 
\left\langle \alpha(t),\beta(t)\right\rangle_H=1
\text{ a.e. on }\Sigma.\]
\end{theorem}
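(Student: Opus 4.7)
My strategy is to prove the stronger identity
\begin{equation*}
M_\varphi^\infty\!\bigl(I_{L^2(\Sigma)}\bigr) \;=\; M_\psi \qquad \text{in } B(L^2(\Sigma)),
\end{equation*}
where $\psi(s):=\langle\alpha(s),\beta(s)\rangle_H$ defines an element of $L^\infty(\Sigma)$ and $M_\psi$ denotes the associated multiplication operator on $L^2(\Sigma)$. Since $M_\psi=I_{L^2(\Sigma)}$ if and only if $\psi\equiv 1$ a.e.\ on $\Sigma$, the theorem will follow immediately.

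To establish this identity, I would use the duality $B(L^2(\Sigma))\simeq S^1(L^2(\Sigma))^*$ together with the adjunction $M_\varphi^\infty=(M_{\dot\varphi}^1)^*$ provided by Lemma \ref{th equivalent schur multipliers}, combined with the density of rank-one operators in $S^1(L^2(\Sigma))$. Concretely, the identity reduces to checking, for all $x,y\in L^2(\Sigma)$, that
\begin{equation*}
\text{tr}\bigl(M_{\dot\varphi}^1(S_{x\otimes y})\bigr) \;=\; \text{tr}\bigl(M_\psi\,S_{x\otimes y}\bigr) \;=\; \int_\Sigma \psi(s)\,x(s)\,y(s)\,ds,
\end{equation*}
the second equality being the trace of the rank-one operator $S_{(\psi x)\otimes y}$. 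The computation of the left-hand trace is the heart of the argument.

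For this, I would first reduce to $H$ separable, by restricting to the separable closed subspace of $H$ containing the essential ranges of $\alpha$ and $\beta$. Fix an orthonormal basis $(e_n)_n$ of $H$ and set $\alpha_n(t):=\langle\alpha(t),e_n\rangle_H$, $\beta_n(s):=\langle\beta(s),e_n\rangle_H$, both in $L^\infty(\Sigma)$. The Parseval expansion of $\dot\varphi(s,t)=\langle\alpha(t),\beta(s)\rangle_H=\sum_n\alpha_n(t)\overline{\beta_n(s)}$ gives the pointwise a.e.\ factorization $\dot\varphi\cdot(x\otimes y)=\sum_n(\overline{\beta_n}\,x)\otimes(\alpha_n\,y)$. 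Cauchy--Schwarz combined with the a.e.\ bounds $\sum_n|\alpha_n|^2=\|\alpha\|_H^2\leq\|\alpha\|_\infty^2$ and $\sum_n|\beta_n|^2=\|\beta\|_H^2\leq\|\beta\|_\infty^2$ yields
\begin{equation*}
\sum_n \|\overline{\beta_n}\,x\|_{L^2}\,\|\alpha_n\,y\|_{L^2} \;\leq\; \|\alpha\|_\infty\|\beta\|_\infty\,\|x\|_{L^2}\|y\|_{L^2},
\end{equation*}
so $\sum_n S_{(\overline{\beta_n}x)\otimes(\alpha_n y)}$ converges absolutely in trace norm and, by uniqueness of limits, equals $M_{\dot\varphi}^1(S_{x\otimes y})$. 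Summing the traces of the rank-one summands and exchanging sum with integral then gives
\begin{equation*}
\text{tr}\bigl(M_{\dot\varphi}^1(S_{x\otimes y})\bigr) \;=\; \int_\Sigma x(s)y(s)\sum_n \alpha_n(s)\overline{\beta_n(s)}\,ds \;=\; \int_\Sigma\psi(s)x(s)y(s)\,ds,
\end{equation*}
as required.

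The main technical obstacle is that $S_{\dot\varphi\cdot(x\otimes y)}$ has a generic $L^2$-kernel, so its trace admits no naive ``diagonal integral'' formula since the diagonal of $\Sigma^2$ is typically null. The expansion of $\dot\varphi$ along an orthonormal basis of $H$ is precisely what decomposes the kernel into an absolutely $S^1$-summable series of elementary tensors whose traces are unambiguous, and the Parseval identity $\sum_n\alpha_n(s)\overline{\beta_n(s)}=\langle\alpha(s),\beta(s)\rangle_H$ evaluated on the diagonal then reconstructs the expected answer $\int_\Sigma\psi\,xy$ in a rigorous way.
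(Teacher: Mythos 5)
Your proposal is correct and follows essentially the same route as the paper's proof: both use the adjunction $M_\varphi^\infty=(M_{\dot\varphi}^1)^*$ to reduce unitality to the trace identity $\mathrm{tr}\bigl(M_{\dot\varphi}^1(S_{x\otimes y})\bigr)=\int_\Sigma\langle\alpha(s),\beta(s)\rangle_H\,x(s)y(s)\,ds$, then reduce to separable $H$, expand $\varphi$ along an orthonormal basis, and verify absolute $S^1$-convergence of the resulting rank-one series before computing the trace termwise. The only (harmless) cosmetic difference is that you package the computation as the slightly stronger identity $M_\varphi^\infty(I)=M_\psi$ with $\psi=\langle\alpha(\cdot),\beta(\cdot)\rangle_H$, whereas the paper only records the equivalence with unitality.
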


\begin{proof}
By linearity and density, we have:
\[ M^\infty_\varphi(I)=I\iff\forall f,h,\in L^2(\Sigma)\text{, } \left\langle M_\varphi^\infty(I),f\otimes h\right\rangle=\left\langle I,f\otimes h\right\rangle  \]
Hence by Lemma \ref{th equivalent schur multipliers},
\begin{align*}
M_\varphi^\infty(I)=I&\iff \forall f,h\in L^2(\Sigma)\text{, } \left\langle I,M_{\widecheck{\varphi}}^1(f\otimes h)\right\rangle  
=\left\langle I,f\otimes h\right\rangle  \\
&\iff \forall f,h\in L^2(\Sigma)
\text{, tr}\left(M_{\widecheck{\varphi}}^1(f\otimes h)\right)=\int_\Sigma f(t)h(t)dt.
\end{align*}

We can suppose that $H$ separable (and also that dim$(H)$ is infinite),
thanks to Pettis's measurability theorem (see \cite{Diesel}).
Thus there exists a Hilbertian basis $(e_n)_{n\geq 1}$ of $H$.
We denote, for all $n$, $\alpha_n(s)=\left\langle\alpha(s),e_n\right\rangle_H$ 
and $\beta_n(t)=\left\langle e_n,\beta(t)\right\rangle_H$ for a.e. $s\in\Sigma$ and $t\in\Sigma$. 
We have
$$
\left\langle \alpha(s),\beta(t)\right\rangle_H  =\sum\limits_n \alpha_n(s)\beta_n(t)\quad
a.e.\ \text{on}\ \Sigma\times\Sigma.
$$

Let $f,h\in L^2(\Sigma)$, we will prove that:
\begin{align}\label{eq sum alphaf et betah fini}
\sum_{n=1}^\infty\|\alpha_n h\|_2^2<\infty\text{ and }\sum_{n=1}^\infty\|\beta_n f\|_2^2<\infty
\end{align}
First,
\begin{align*}
\sum_{n=1}^\infty \|\alpha_n h\|^2_2 & = 
\sum_{n=1}^\infty\int_\Sigma |\alpha_n(s)|^2 |h(s)|^2ds\,
=\int_\Sigma\sum_{n=1}^\infty |\alpha_n(s)|^2 |h(s)|^2ds\\
&= \int_\Sigma\|\alpha(s)\|^2_H|h(s)|^2ds\,
\leq \|\alpha\|_\infty^2\|h\|_2^2<\infty.
\end{align*}
Similarly, we have
$\sum \|\beta_n f \|_2^2<\infty$.
By the Cauchy-Schwarz inequality, we deduce
\begin{align}\label{eq sumint alpha f beta h}
&\sum_{n=1}^\infty\int_\Sigma |\alpha_n(t)||h(t)||\beta_n(t)||f(t)|dt<\infty.
\end{align}
Moreover using (\ref{eq sum alphaf et betah fini}) again,
and (\ref{S1}), we may consider
$\sum\limits_{n=1}^\infty \beta_n f\otimes \alpha_n h$  in $S^1(L^2(\Sigma))$ and we have
\begin{align*}
\text{tr}\left(\sum\limits_{n=1}^\infty\beta_n f\otimes \alpha_n h\right)
&=\sum\limits_{n=1}^\infty\text{tr}\left(\beta_n f\otimes \alpha_n h\right)\\
&=\sum\limits_{n=1}^\infty\int_\Sigma\alpha_n(t) h(t)  \beta_n(t) f(t)dt\\
&=\int_\Sigma\sum\limits_{n=1}^\infty\alpha_n(t) h(t)  \beta_n(t) f(t)dt,
\ \text{by \eqref{eq sumint alpha f beta h}},\\
&=\int_\Sigma \langle \alpha (t),\beta(t)\rangle_H f(t)h(t)dt.
\end{align*}
Let us now show that
\begin{align*}
M_{\widecheck{\varphi}}^1(S_{f\otimes h})=\sum_{n=1}^\infty \beta_n f\otimes \alpha_n h.
\end{align*}
Let $v_1,v_2\in L^2(\Sigma)$. According to \eqref{eq sum alphaf et betah fini}, we have
\begin{align}\label{eq alphasfsbetatht fini}
\sum_n\int_{\Sigma^2}|\alpha_n(t)h(t)v_1(t) \beta_n(s)f(s)v_2(s)\vert
dtds<+\infty.
\end{align}
Then  we have (using the usual duality pairing $\langle\,\cdotp,\,\cdotp\rangle$ between $L^2(\Sigma)$ and itself)
\begin{align*}
\langle M_{\widecheck{\varphi}}^1(f\otimes h)v_1,v_2\rangle
&=\langle S_{\widecheck{\varphi}(f\otimes h)}v_1,v_2\rangle\\
&=\int_{\Sigma^2} \varphi(t,s)f(s)h(t)v_1(t)v_2(s)dtds\\
&=\int_{\Sigma^2} \langle\alpha(t),\beta(s)\rangle_H f(s)h(t)v_1(t)v_2(s)dtds\\
&=\int_{\Sigma^2} \sum_{n=1}^\infty \alpha_n(t)\beta_n(s) f(s)h(t)v_1(t)v_2(s)dtds\\
&=\sum_{n=1}^\infty \int_{\Sigma^2} \alpha_n(t)\beta_n(s) f(s)h(t)v_1(t)v_2(s)dtds, 
\text{ by \eqref{eq alphasfsbetatht fini}},\\
&=\sum_{n=1}^\infty \langle \alpha_n h, v_1 \rangle \langle \beta_n f,v_2\rangle\\
&=\left\langle \left( \sum_{n=1}^\infty \beta_n f\otimes \alpha_n h\right) v_1,v_2\right\rangle
\end{align*}
This proves the announced identity. We deduce that
\begin{align*}
\text{tr}(M^1_{\widecheck{\varphi}}(S_{f\otimes h}))=\int_\Sigma f(t)h(t) \left\langle \alpha(t),\beta(t)\right\rangle_H  dt.
\end{align*}
Finally,
\begin{align*}
M_\varphi^\infty\text{ is unital }&\iff \forall f,h\in L^2(\Sigma)\text{, }\int_\Sigma f(t)h(t)\left\langle \alpha (t),\beta (t)\right\rangle_H  dt=\int_\Sigma f(t)h(t)dt\\
&\iff \left\langle\alpha(t),\beta(t)\right\rangle_H  =1\ \text{a.e. on}\ \Sigma. 
\end{align*}
\end{proof}

\section{Dilatation of Schur multiplier}

The goal of the section is to prove Theorem (\textbf{A}), stated
in the Introduction.

Throughout we let $\Sigma$ be a $\sigma$-finite measure space,
and we let 
$\varphi\in L^\infty(\Sigma^2)$ such that $M_\varphi$ is a self-adjoint, unital, positive Schur multiplier on $B(L^2(\Sigma))$. We define a bilinear, symmetric  map $V: L^1_\R(\Sigma)\times L^1_\R(\Sigma)\to \R$ by
\begin{align*}
V(f,h)=\int_{\Sigma^2}\varphi(s,t)h(t)f(s)dtds,\qquad 
(f,h)\in L^1_\R(\Sigma)\times L^1_\R(\Sigma).
\end{align*} 
We claim that $V$ is positive. To prove it, fix some $f\in L^1_\R(\Sigma)\cap L^2_\R(\Sigma)$.
For all $E\subset \Sigma$ such that $\vert E\vert<\infty$, we have $\chi_E\otimes\chi_E\geq 0$. Hence
by the positivity of $M_\varphi$, we have 
\begin{align*}
\langle[ M_\varphi(\chi_E\otimes\chi_E)](f), f\rangle\geq 0.
\end{align*}
Equivalently,
\begin{align*}
\int_{E\times E}\varphi(s,t)f(t)f(s)dtds\geq 0.
\end{align*}
Passing to the supremum over $E$, we deduce
\begin{align*}
\int_{\Sigma\times \Sigma}\varphi(s,t)f(t)f(s)dtds\geq 0.
\end{align*}
Since $L^1_\R(\Sigma)\cap L^2_\R(\Sigma)$ is dense in 
$L^1_\R(\Sigma)$, this proves the claim.

Let $K_V\subset L^1_\R(\Sigma)$ be the kernel of the seminorm
$V(f,f)^\frac12$. Equipped with the resulting norm, the quotient
$L^1_\R(\Sigma)/K_V$ is a real pre-Hilbert space. We let 
$\mathbb{H}$ denote its completion. This is a real Hilbert space.
Let $N$ denote the tracial 
von Neumann algebra $(\Gamma_{-1}(\mathbb{H});\tau)$ (see section 2).

In the sequel, wherever $h\in L^1_\R(\Sigma)$, 
we let $\dot{h}\in\mathbb{H}$ denote its class. 
We let $T: L^1(\Sigma)\to N$  be the unique linear
map such that  for all $h\in L^1_\R(\Sigma)$, 
$$
T(h)=\omega(\dot{h}).
$$
We let $F\in L^\infty_\sigma(\Sigma, N)$
and $d_0\in L^\infty(\Sigma)\overline{\otimes}N$ be 
associated with $T\in B(L^1(\Sigma), N)$ through the identifications
$B(L^1(\Sigma),N)\simeq L^\infty_\sigma(\Sigma,N)$ 
and $B(L^1(\Sigma),N)\simeq L^\infty(\Sigma)\overline{\otimes}N$ 
provided by (\ref{th identification lsigmainfty ...}).

\begin{lemme}\label{Norm1}
We have $\|T\|\leq 1$, $\|F\|\leq 1$ and $\|d_0\|\leq 1$.
\end{lemme}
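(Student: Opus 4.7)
By the isometric isomorphisms of (\ref{th identification lsigmainfty ...}), the three norms $\|T\|_{B(L^1(\Sigma),N)}$, $\|F\|_{L^\infty_\sigma}$ and $\|d_0\|_{L^\infty\overline\otimes N}$ all coincide, so it suffices to prove $\|T\|\leq 1$. The plan is to exploit the Kolmogorov-type representation of $\varphi$ provided by positivity, normalize it using unitality, and then apply a scalar triangle inequality on simple functions with disjoint supports.

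By Theorem \ref{th equivalence positif completement positif}, positivity of $M_\varphi$ supplies a Hilbert space $H$ and $\alpha\in L^\infty(\Sigma,H)$ such that $\varphi(s,t)=\langle\alpha(s),\alpha(t)\rangle_H$ a.e.; and applying Theorem \ref{th unital et alpha beta} with $\beta=\alpha$, unitality of $M_\varphi$ forces $\|\alpha(s)\|_H=1$ a.e. For any measurable $E\subseteq\Sigma$ with $|E|<\infty$, Fubini and the standard bound $\bigl\Vert\int_E\alpha(s)\,ds\bigr\Vert_H\leq\int_E\Vert\alpha(s)\Vert_H\,ds$ for Bochner integrals yield
\begin{align*}
\|\dot{\chi_E}\|_{\mathbb{H}}^2 = V(\chi_E,\chi_E) = \int_{E\times E}\langle\alpha(s),\alpha(t)\rangle_H\,ds\,dt = \Bigl\|\int_E \alpha(s)\,ds\Bigr\|_H^2 \leq |E|^2.
\end{align*}
Combined with (\ref{eq équilité norme}), this gives $\|T(\chi_E)\|_N=\|\omega(\dot{\chi_E})\|_N=\|\dot{\chi_E}\|_{\mathbb{H}}\leq |E|=\|\chi_E\|_1$.

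For a simple function $h=\sum_{i=1}^n c_i\chi_{E_i}$ with $c_i\in\mathbb{C}$ and pairwise disjoint $E_i$ of finite measure, complex linearity of $T$ and the scalar triangle inequality in the C*-algebra $N$ give
\begin{align*}
\|T(h)\|_N \leq \sum_{i=1}^n|c_i|\,\|\omega(\dot{\chi_{E_i}})\|_N \leq \sum_{i=1}^n |c_i||E_i| = \|h\|_1.
\end{align*}
Since such ``disjointly supported'' simple functions are dense in $L^1(\Sigma)$ and $T$ is continuous (for instance via the crude a priori estimate $\|T(h)\|_N\leq\|h_1\|_1+\|h_2\|_1\leq 2\|h\|_1$ obtained by splitting $h=h_1+ih_2$), we conclude $\|T\|\leq 1$. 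The only subtle point is that the naive split into real and imaginary parts is too lossy to deliver the sharp constant $1$; decomposing $h$ directly into pieces with disjoint supports and absorbing the complex scalars $c_i$ into a scalar triangle inequality is what avoids this loss.
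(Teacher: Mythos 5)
Your proof is correct, but it reaches the key estimate $\|\omega(\dot{\chi_E})\|\leq \vert E\vert$ by a genuinely different route. The paper avoids any factorization of $\varphi$: it first shows $\|\varphi\|_{\infty}\leq 1$ by combining the Russo--Dye theorem (positive unital implies $\|M_\varphi^\infty\|=1$), the duality in Lemma \ref{th equivalent schur multipliers} to get $\|M^1_\varphi\|=1$, and interpolation to bound $\|M_\varphi\|_{S^2\to S^2}$, whence $\Vert\dot f\Vert_{\mathbb H}\le\Vert f\Vert_1$ for every real $f$ directly from the definition of $V$. You instead invoke Theorem \ref{th equivalence positif completement positif} to write $\varphi(s,t)=\langle\alpha(s),\alpha(t)\rangle_H$, use Theorem \ref{th unital et alpha beta} to normalize $\|\alpha(t)\|_H=1$ a.e., and bound a Bochner integral. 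Both routes then finish identically (disjointly supported simple functions plus the triangle inequality, and the isometric identifications \eqref{th identification lsigmainfty ...} to transfer the bound to $F$ and $d_0$). Your argument is valid provided Theorem \ref{th equivalence positif completement positif} is taken as the classical result with the external proofs the paper cites (Paulsen plus the approximation argument of Lafforgue et al.). One caveat worth noting: the paper announces that its \emph{own} proof of Theorem \ref{th equivalence positif completement positif} is the identity \eqref{eq juste alpha}, which is established in the lemma $d_0^2=1$ and which itself uses Lemma \ref{Norm1}; so within the paper's internal logic your route would be circular, and this is presumably why the author chose the Russo--Dye/interpolation path here. What your approach buys is a more conceptual explanation of where the constant $1$ comes from (unit vectors $\alpha(t)$); what the paper's approach buys is independence from the factorization theorem, preserving its claim to reprove that theorem.
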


\begin{proof}
We recall that $M_\varphi$ is positive and unital. By the
Russo-Dye Theorem, this implies that $\|M^\infty_\varphi\|=1$. 
It follows from Lemma \ref{th equivalent schur multipliers} that 
$\|M^1_\varphi\|=1$. Hence by interpolation, we have
$\Vert M_\varphi  :
S^2(L^2(\Sigma))\to S^2(L^2(\Sigma))\Vert\leq 1$. Consequently,
$\|\varphi\|_\infty\leq 1$ thanks to \eqref{lem norm phi}.

For all $f\in L_\R^1(\Sigma)$, we have
\begin{align*}
\|\dot{f}\|_\mathbb{H}^2=\int_{\Sigma^2}\varphi(s,t)f(t)f(s)dtds\leq \|\varphi\|_\infty\|f\|_1^2\leq \|f\|^2_1.
\end{align*}
So we have, thanks to \eqref{eq équilité norme},
\begin{align}\label{eq majoration norme f}
\|\omega(\dot{f})\|=\|\dot{f}\|_\mathbb{H}\leq \|f\|_1.
\end{align}
Consider $h=\sum\limits_{i=1}^n\alpha_i\chi_{E_i}$, where 
$E_1,\dots E_n$  are pairwise disjoint elements of finite 
measure and $\alpha_1,\ldots,\alpha_n\in{\mathbb C}$. 
Then we have $\|h\|_1=\sum|\alpha_i| \vert E_i\vert$. 
On the other hand, 
$$
T(h)=\sum \alpha_i T\left(\chi_{E_i}\right)
= \sum \alpha_i w\left(\dot{\chi_{E_i}}\right).
$$
Hence
\begin{align*}
\|T(h)\|_1 \leq \sum 
|\alpha_i|\left\|w\left(\dot{\chi_{E_i}}\right)\right\| 
\leq \sum|\alpha_i|\|\chi_{E_i}\|_1
=\sum| \alpha_i|\vert E_i\vert =\|h\|_1,
\end{align*}
by \eqref{eq majoration norme f}.
This shows that $\|T\|\leq 1$, and hence $\|F\|\leq 1$ and $\|d_0\|\leq 1$.
\end{proof}

The following lemma provides a link between $F$ and $\varphi$.

\begin{lemme}\label{varphi=tau FF}
For almost every $(s,t)\in \Sigma^2$, $\tau(\tilde{F\times F}(s,t))=\varphi(s,t)$.
\end{lemme}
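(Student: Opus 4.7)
The plan is to test the identity (\ref{eq phitildetimes phi}) defining $\tilde{F\times F}$ against the natural state $\eta=\tau\in N_{*}$ and identify both sides explicitly.

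First, I would fix $h_1,h_2\in L^1_\R(\Sigma)$ and specialize (\ref{eq phitildetimes phi}) to the choice $\eta=\tau$ (legitimate because $\tau$ is a normal finite trace on $N$, hence belongs to $N_*$). This yields
\begin{equation*}
\tau\bigl(T(h_1)T(h_2)\bigr)=\int_{\Sigma^2}\tau\bigl(\tilde{F\times F}(s,t)\bigr)h_1(s)h_2(t)\,ds\,dt.
\end{equation*}

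Next I compute the left-hand side. By construction $T(h_i)=\omega(\dot{h_i})$, so using (\ref{Trace}) and the definition of the inner product on $\mathbb{H}$ (via $V$), I get
\begin{equation*}
\tau\bigl(T(h_1)T(h_2)\bigr)=\tau\bigl(\omega(\dot{h_1})\omega(\dot{h_2})\bigr)=\langle\dot{h_1},\dot{h_2}\rangle_{\mathbb{H}}=V(h_1,h_2)=\int_{\Sigma^2}\varphi(s,t)h_1(s)h_2(t)\,ds\,dt.
\end{equation*}
Combining the two displays, I obtain the equality of integrals
\begin{equation*}
\int_{\Sigma^2}\bigl[\tau\bigl(\tilde{F\times F}(s,t)\bigr)-\varphi(s,t)\bigr]h_1(s)h_2(t)\,ds\,dt=0
\end{equation*}
for all $h_1,h_2\in L^1_\R(\Sigma)$.

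Finally, I would conclude that $\tau(\tilde{F\times F})=\varphi$ a.e. on $\Sigma^2$ by a standard density argument. Since $\varphi\in L^\infty(\Sigma^2)$ (and $\tau(\tilde{F\times F})\in L^\infty(\Sigma^2)$ as $\tilde{F\times F}\in L^\infty_\sigma(\Sigma^2,N)$ with $\tau\in N_*$), both functions define elements of $L^1(E)^*$ for any measurable $E\subset\Sigma^2$ of finite product measure $E\subset A\times B$ with $|A|,|B|<\infty$. On any such rectangle, the tensor products $h_1\otimes h_2$ with $h_1\in L^1_\R(A)$, $h_2\in L^1_\R(B)$ span a dense subspace of $L^1_\R(A\times B)$, so the annihilation property forces $\tau(\tilde{F\times F})=\varphi$ a.e. on $A\times B$. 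Invoking $\sigma$-finiteness of $\Sigma$ gives the conclusion on all of $\Sigma^2$.

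I do not expect a real obstacle: the argument is essentially an unpacking of definitions, once one observes that pairing with $\tau$ turns the bilinear formula (\ref{eq phitildetimes phi}) into the trace computation (\ref{Trace}). The only point requiring a bit of care is the passage from the integrated identity to the pointwise a.e. identity, handled by the density and $\sigma$-finiteness step above.
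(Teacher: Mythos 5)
Your proof is correct and follows essentially the same route as the paper: pair \eqref{eq phitildetimes phi} with $\eta=\tau$, identify $\tau(T(h_1)T(h_2))=\langle\dot{h_1},\dot{h_2}\rangle_{\mathbb{H}}=\int\varphi\,h_1\otimes h_2$ via \eqref{Trace}, and conclude by the arbitrariness of $h_1,h_2$. The only difference is that you spell out the final density/$\sigma$-finiteness step, which the paper leaves implicit.
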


\begin{proof}
We use the equation \eqref{eq phitildetimes phi} with $\eta=\tau$.
We obtain for all $f,h\in L^1_\R(\Sigma)$,
\begin{align*}
\int_{\Sigma^2}\tau(\tilde{F\times F}(s,t))f(s)h(t)dtds&
=\tau(T(f)T(h))\\
&=\tau(\omega(\dot{f})\omega(\dot{h}))\\
&=\int_{\Sigma^2}\varphi(s,t)f(s)h(t)dtds,
\end{align*}
by (\ref{Trace}). The expected equality follows at once.
\end{proof}

\begin{lemme}
We have $d_0^*=d_0$ and $d_0^2=1$.
\end{lemme}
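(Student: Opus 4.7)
The plan is to produce a strongly measurable representative $F(s)=\omega(e(s))$ for some $e\in L^\infty(\Sigma,\mathbb{H})$ with $\|e(s)\|_{\mathbb{H}}=1$ a.e., and then to read off both identities from the pointwise Fermion relations $\omega(e)^*=\omega(e)$ and $\omega(e)^2=\|e\|_{\mathbb{H}}^2\cdot\mathbf{1}$ (the latter by (\ref{eq équilité norme})), using that the $*$-algebra operations on $L^\infty(\Sigma)\overline{\otimes}N\simeq L^\infty_\sigma(\Sigma,N)$ are given pointwise for strongly measurable functions.

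First I would observe that $\delta:L^1(\Sigma)\to\mathbb{H}$, $h\mapsto\dot h$, is linear with $\|\delta\|\leq 1$, by the estimates in the proof of Lemma \ref{Norm1}. Replacing $\mathbb{H}$ by the closed linear span of $\delta(L^1(\Sigma))$ (and accordingly passing to the corresponding Fermion subalgebra), I may assume $\mathbb{H}$ is separable. Since separable Hilbert spaces have the Radon--Nikod\'ym property, the Dunford--Pettis representation theorem yields $e\in L^\infty(\Sigma,\mathbb{H})$ with $\|e\|_\infty\leq 1$ such that $\dot h=\int_{\Sigma}h(s)e(s)\,ds$ for every $h\in L^1_\R(\Sigma)$. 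Setting $\widetilde F(s):=\omega(e(s))$ defines a strongly measurable bounded $N$-valued function, since $\omega:\mathbb{H}\to N$ is bounded and $\R$-linear with $\|\omega(\xi)\|=\|\xi\|_{\mathbb{H}}$. Commuting $\omega$ with the Bochner integral yields, for all $\eta\in N_*$ and $h\in L^1_\R(\Sigma)$,
\[
\int_{\Sigma}\langle \widetilde F(s),\eta\rangle h(s)\,ds=\langle \omega(\dot h),\eta\rangle=\langle T(h),\eta\rangle,
\]
so $\widetilde F=F$ in $L^\infty_\sigma(\Sigma,N)$ by (\ref{equalite eta}). In particular, $F$ admits a strongly measurable representative.

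Now Remark \ref{rmq produit tilde simple} applies and gives $\widetilde{F\times F}(s,t)=F(s)F(t)=\omega(e(s))\omega(e(t))$. Taking $\tau$ and combining (\ref{Trace}) with Lemma \ref{varphi=tau FF} yields $\varphi(s,t)=\langle e(s),e(t)\rangle_{\mathbb{H}}$ for almost every $(s,t)\in\Sigma^2$. Applying Theorem \ref{th unital et alpha beta} to this particular representation (with $\alpha=\beta=e$, complexifying $\mathbb{H}$ if one insists on a complex Hilbert space), the unitality of $M_\varphi$ forces $\langle e(t),e(t)\rangle_{\mathbb{H}}=1$, i.e.\ $\|e(t)\|_{\mathbb{H}}^2=1$, for almost every~$t$.

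Finally, since $F$ is strongly measurable, the involution and product of $L^\infty(\Sigma)\overline{\otimes}N$ act on $F$ pointwise in $N$. Hence $d_0^*$ corresponds to $s\mapsto F(s)^*=\omega(e(s))=F(s)$, giving $d_0^*=d_0$, and $d_0^2$ corresponds to $s\mapsto F(s)^2=\|e(s)\|_{\mathbb{H}}^2\cdot\mathbf{1}=\mathbf{1}$, giving $d_0^2=1$. The main obstacle I foresee is Step~1, namely upgrading the a priori only $w^*$-measurable $F$ to a genuinely strongly measurable representative; once this upgrade is secured, both equalities follow directly from the CAR-type relations in $\Gamma_{-1}(\mathbb{H})$ and the unitality hypothesis on $M_\varphi$.
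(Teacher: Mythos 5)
Your proof is correct, and while it shares the paper's central idea, it finishes differently. The paper also invokes the Radon--Nikod\'ym property to produce a genuine $L^\infty$-representative --- but of $\gamma=j\circ T:L^1(\Sigma)\to L^2(N)$ rather than of $h\mapsto\dot h$ into $\mathbb{H}$ (the two are interchangeable, since $\xi\mapsto j(\omega(\xi))$ is an isometry of $\mathbb{H}$ into $L^2(N)$ by (\ref{Trace})) --- and it likewise feeds the resulting factorization $\varphi(s,t)=\langle\gamma(s),\gamma(t)\rangle$ into Theorem \ref{th unital et alpha beta} to get the a.e.\ normalization. The divergence is in how the identities for $d_0$ are then extracted: the paper proves $d_0^*=d_0$ separately by a duality computation against $f\otimes\eta$, normalizes the measure, and deduces $d_0^2=1$ from $\tilde\tau(d_0^*d_0)=\int\|\gamma\|^2=1$ together with $\|d_0\|\leq1$ and faithfulness of $\tilde\tau$; you instead upgrade $F$ itself to the strongly measurable function $\omega\circ e$ and read off both identities pointwise from $\omega(\xi)^*=\omega(\xi)$ and (\ref{eq équilité norme}). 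Your route is arguably cleaner in that it handles both claims at once and avoids the normalization of the measure and the trace/positivity argument, at the cost of having to know that $L^\infty(\Sigma,N)$ sits inside $L^\infty(\Sigma)\overline{\otimes}N$ as a $*$-subalgebra with pointwise operations (true, and provable by approximating by uniformly bounded simple functions and using joint strong continuity of multiplication on bounded sets, but worth a line of justification). Two small inaccuracies that do not affect the argument: the reduction to a separable $\mathbb{H}$ is superfluous (every Hilbert space, being reflexive, has the RNP) and as stated it does not actually achieve separability when $L^1(\Sigma)$ is nonseparable; and the Dunford--Pettis representation is usually stated for finite measures, so for $\sigma$-finite $\Sigma$ you should normalize the measure first, exactly as the paper does.
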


\begin{proof}
We let $\tilde{\tau}=\int\,\cdotp\overline{\otimes}\tau$ denote the usual trace on $L^\infty(\Sigma)\overline{\otimes}N$.
For any $f\in L^1_\R(\Sigma)$ and $\eta\in N_*\simeq L^1(N)$, we have
\begin{align*}
\langle d_0^*,f\otimes \eta\rangle
&=\tilde{\tau}(d_0^*(f\otimes \eta))= 
\overline{\tilde{\tau}(d_0(f\otimes \eta^*))}\\
&=\overline{\tau(T(f)\eta^*)}
=\overline{\tau (w(\dot{f}) \eta^*)}\\
&=\tau (w(\dot{f}) \eta)
= \tau(T(f)\eta)
=\langle d_0,f\otimes \eta\rangle.
\end{align*}
By linearity and density, this implies that $d_0^*=d_0$.

By assumption, $(\Sigma,\mu)$ is $\sigma$-finite. Thus
there exists a positive function $v\in L^1(\Sigma)$ such that $\|v\|_1=1$. Changing
$d\mu$ into $vd\mu$, we may therefore assume
that $\mu$ is a probability measure.
Since $\tau$ is normalized, the trace
$\tilde{\tau}$ is normalized as well.

Let $j:N \to L^2(N )$ be the natural embedding and 
let $T_2=j\circ T:L^1(\Sigma)\to L^2(N )$. Since $L^2(N )$ is a Hilbert space, 
it has the Radon-Nikodym property (see \cite[corollary IV.1.4]{Diesel}). 
Hence there exists $\gamma\in L^\infty(\Sigma,L^2(N ))$ such that for 
all $h\in L^1(\Sigma)$:
\begin{align*}
T_2(h)=\int_\Sigma h(t)\gamma(t)dt.
\end{align*} 
For any $f,h\in L^1_\R(\Sigma)$, we have
\begin{align*}
\int_{\Sigma^2} \varphi(s,t)f(s)h(t)dtds
&=\tau (\omega(\dot{f})\omega(\dot{h})),\quad\text{by}\,(\ref{Trace}),\\
&=\tau (\omega(\dot{f})\omega(\dot{h})^*)\\
&=\langle j(\omega(\dot{f})),j(\omega(\dot{h}))\rangle_{L^2(N)}\\
&=\langle T_2(f),T_2(h)\rangle_{L^2(N)}\\
&=\left\langle\int_\Sigma f(s)\gamma(s)ds,\int_\Sigma h(t)\gamma(t)dt\right\rangle_{L^2(N)}\\
&=\int_{\Sigma^2} \langle \gamma(s),\gamma(t)\rangle_{L^2(N)} f(s)h(t)dtds.
\end{align*}
This implies that for almost every $(s,t)\in \Sigma^2$, 
\begin{align}\label{eq juste alpha}
\langle \gamma(s),\gamma(t)\rangle_{L^2(N)}=\varphi(s,t).
\end{align}
We obtained a factorization of $\varphi$ as in 
Theorem \ref{theoreme multi Schur alpha beta}, with $\alpha=\beta=\gamma$.
Now we can use Theorem \ref{th unital et alpha beta} 
and we obtain that for almost every $t\in\Sigma$,
\begin{align*}
\langle \gamma(t),\gamma(t)\rangle_{L^2(N)}=1
\end{align*}
Since the measure on $\Sigma$ is normalized, this yields
\begin{align*}
\int_\Sigma \|\gamma(t)\|^2_{L^2(N )}dt=1.
\end{align*}
Since $L^\infty(\Sigma)\overline{\otimes}N$ is normalized we have, using
(\ref{Lp}), a contractive inclusion 
$$
L^\infty(\Sigma)\overline{\otimes}N\subset L^2(\Sigma,L^2(N)).
$$
By construction, $\gamma\in L^2(\Sigma,L^2(N))$ corresponds to $d_0$.
Thus we have proved that
$$
\tilde{\tau}(d_0^*d_0)=1.
$$
Since $\tilde{\tau}$ is normalized,
this shows that $\tilde{\tau}(1-d_0^*d_0)=0$. However
by Lemma \ref{Norm1}, $\Vert d_0\Vert\leq 1$ hence $1-d_0^*d_0\geq 0$. 
Since $\tilde{\tau}$ is faithful, we deduce that $1-d_0^*d_0 = 0$, that is,
$d_0^2=1$.
\end{proof}

\begin{rmq}
The identity \eqref{eq juste alpha} provides a
new proof of Theorem 
\ref{th equivalence positif completement positif}.
\end{rmq}

We are now ready to introduce the maps $U,J$ providing 
the absolute dilation of $M_\varphi$. We let $N^\infty$ denote
the infinite von Neumann tensor product $\overline{\otimes}_\Z N$ and we 
let $\tau^\infty$ denote the normal faithful finite trace on $N^\infty$ 
(see \cite{Takesaki3}).
We set 
$$
F^\infty=\cdots \otimes 1_N\otimes F\otimes 1_N\cdots \in L_\sigma^\infty(\Sigma, N^\infty),
$$
where $F$ is in 0 position.  We denote by $T^\infty\in B(L^1(\Sigma),N^\infty)$ 
the element associated with $F^\infty$ through the identification between $L_\sigma^\infty(\Sigma, N^\infty)$ and 
$B(L^1(\Sigma),N^\infty)$. We also let $d\in L^\infty(\Sigma)\overline{\otimes}N^\infty$ be associated with $F^\infty$ through the identification between
$L_\sigma^\infty(\Sigma,N^\infty)$ and $L^\infty(\Sigma)\overline{\otimes}N^\infty$.
We know from above that
\begin{align*}
\|d\|_\infty\leq 1\text{, }d^*=d\text{ and }d^2=1.
\end{align*}
In the sequel, we regard $L^\infty(\Sigma)$ as a 
von Neumann subalgebra of $B(L^2(\Sigma))$
by identifying any $\phi\in L^\infty(\Sigma)$ with the multiplication operator,
$h\mapsto\phi h$ for $h\in L^2(\Sigma)$. Thus we have
$$
L^\infty_\sigma(\Sigma,N^\infty)\simeq 
L^\infty(\Sigma)\overline{\otimes}N^\infty\subset B(L^2(\Sigma))\overline{\otimes}N^\infty.
$$
We will see $d$ as an element of $B(L^2(\Sigma))\overline{\otimes}N^\infty$.

We recall that tr denotes the trace on $B(L^2(\Sigma))$, we let 
$N'=B(L^2(\Sigma))\overline{\otimes}N^\infty$
and we let
$\tau_{N'}=\text{tr }\overline{\otimes}\tau^\infty$ be the natural 
semifinite normal faithful trace on $N'$. 
We define 
$$
J: B(L^2(\Sigma))\longrightarrow B(L^2(\Sigma))\overline{\otimes}N^\infty,
\qquad J(x)=x\otimes 1_{N^\infty}.
$$
This is a trace preserving one-to-one $\ast$-homomorphism.
We let $\E : N'\to B(L^2(\Sigma))$ denote the conditional 
expectation associated with $J$.

We introduce the right shift $\mathcal{S} : N^\infty\to N^\infty$. This is
a normal, trace preserving $\ast$-automorphism such that 
for all $(x_n)_{n\in \Z}\subset N$,
\begin{align*}
\mathcal{S}(\cdots\otimes x_0\otimes x_1\otimes x_2\otimes \cdots)=\cdots\otimes x_{-1}\otimes x_0\otimes x_1\otimes \cdots.
\end{align*}
We define 
$$
U : B(L^2(\Sigma))\overline{\otimes}N^\infty\longrightarrow 
B(L^2(\Sigma))\overline{\otimes}N^\infty
$$
by
$$
U(y) = d((Id\otimes\mathcal{S})(y))d,\qquad y\in B(L^2(\Sigma))\overline{\otimes}N^\infty.
$$
Since $d$ is a self-adoint symmetry, $U$ is a normal, trace preserving
$\ast$-automorphism.

We apply Lemma \ref{prop existence gamma} with $N^\infty$ and we let
$$
\Gamma : L^2_\sigma(\Sigma^2, N^\infty)\longrightarrow B(L^2(\Sigma))\overline{\otimes}N^\infty
$$
be the resulting $w^*$-continuous contraction.

\begin{lemme}\label{prop R}
The mapping $R:L^2_\sigma(\Sigma^2,N^\infty)\to L^2_\sigma(\Sigma,N^\infty)$ defined by $R(F)=\mathcal{S}\circ F$ is  $w^*$- continuous. In addition,
we have
\begin{align}\label{formule avec R et Gamma}
(Id\otimes\mathcal{S})\Gamma=\Gamma R.
\end{align}
\end{lemme}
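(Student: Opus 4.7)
My plan is to handle the two assertions separately. For the existence and $w^*$-continuity of $R$, I would mimic the proof of Lemma \ref{lem RcircF} but in the $L^2_\sigma$ setting (the target space should read $L^2_\sigma(\Sigma^2,N^\infty)$ for compatibility with the formula $(Id\otimes\mathcal{S})\Gamma=\Gamma R$). Since $\mathcal{S}$ is a trace preserving $\star$-automorphism of $N^\infty$, it is $w^*$-continuous and admits a contractive pre-adjoint $\mathcal{S}_*:(N^\infty)_*\to (N^\infty)_*$. Post-composition $h\mapsto \mathcal{S}_*\circ h$ then defines a contraction on the Bochner space $L^2(\Sigma^2,(N^\infty)_*)$. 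Invoking Theorem \ref{th Lpsigma predual Lq } with $p=q=2$ to identify $L^2_\sigma(\Sigma^2,N^\infty)$ as the dual of $L^2(\Sigma^2,(N^\infty)_*)$, the adjoint of this contraction is a $w^*$-continuous contraction on $L^2_\sigma(\Sigma^2,N^\infty)$; a short duality computation using $\langle \mathcal{S}(x),\eta\rangle=\langle x,\mathcal{S}_*(\eta)\rangle$ shows that this adjoint coincides with $F\mapsto \mathcal{S}\circ F$ on $w^*$-measurable functions, hence with $R$.

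For the identity $(Id\otimes\mathcal{S})\Gamma=\Gamma R$, I would argue by $w^*$-density. Both sides are $w^*$-continuous maps from $L^2_\sigma(\Sigma^2,N^\infty)$ into $B(L^2(\Sigma))\overline{\otimes}N^\infty$: the right-hand side because $R$ is $w^*$-continuous by the first part and $\Gamma$ is $w^*$-continuous by Lemma \ref{prop existence gamma}, and the left-hand side because $Id\otimes\mathcal{S}$ is a normal $\star$-automorphism of $B(L^2(\Sigma))\overline{\otimes}N^\infty$. Since $L^2(\Sigma^2)\otimes N^\infty$ is $w^*$-dense in $L^2_\sigma(\Sigma^2,N^\infty)$, it suffices to verify the identity on elementary tensors $\theta\otimes y$ with $\theta\in L^2(\Sigma^2)$ and $y\in N^\infty$. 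By linearity of $\mathcal{S}$ one has $R(\theta\otimes y)=\theta\otimes \mathcal{S}(y)$, so both $(Id\otimes\mathcal{S})\Gamma(\theta\otimes y)$ and $\Gamma R(\theta\otimes y)$ reduce to $S_\theta\otimes \mathcal{S}(y)$, using the defining formula $\Gamma(\theta\otimes y)=S_\theta\otimes y$ from Lemma \ref{prop existence gamma}.

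The only slightly delicate point is the promotion of Lemma \ref{lem RcircF} from $L^\infty_\sigma$ to $L^2_\sigma$, but this is essentially formal: Theorem \ref{th Lpsigma predual Lq } supplies the required duality for every conjugate pair with $p<\infty$, so the same pre-adjoint construction carries through without modification. I do not anticipate any substantive obstacle; once the correct duality is set up and the $w^*$-density of simple tensors is invoked, the verification on $\theta\otimes y$ is immediate from the defining properties of $\Gamma$ and $\mathcal{S}$.
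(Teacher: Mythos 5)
Your proposal is correct and follows essentially the same route as the paper: the paper proves the $w^*$-continuity of $R$ by invoking Lemma \ref{lem RcircF} with $V=\mathcal{S}$ (the same pre-adjoint construction you spell out, transported to the $L^2_\sigma$ duality of Theorem \ref{th Lpsigma predual Lq }), and then establishes \eqref{formule avec R et Gamma} by checking it on elementary tensors $\theta\otimes y$ and concluding by linearity, $w^*$-continuity and the $w^*$-density of $L^2(\Sigma^2)\otimes N^\infty$. Your explicit adaptation of the pre-adjoint argument to the $L^2_\sigma$ setting, and your correction of the codomain to $L^2_\sigma(\Sigma^2,N^\infty)$, are both appropriate.
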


\begin{proof}
We apply Lemma \ref{lem RcircF} with $V=\mathcal{S}$ to prove the first point. 
For all $\theta \in L^2(\Sigma)$, and $y\in N^\infty$, we have
\begin{align*}
(Id\otimes \mathcal{S} )\Gamma(\theta\otimes y)= S_\theta\otimes \mathcal{S}(y)=\Gamma(\theta\otimes\mathcal{S} (y))=\Gamma(R(\theta\otimes y)).
\end{align*}
We deduce the equality $(Id\otimes\mathcal{S})\Gamma=\Gamma R$ on 
$L^2_\sigma(\Sigma^2,N^\infty)$, by linearity, $w^*$-continuity of the maps and $w^*$-density of $L^2(\Sigma^2)\otimes N^\infty$ in $L^2_\sigma(\Sigma^2,N^\infty)$.
\end{proof}

For any $k\in \N$ we set $N^k=\overset{\text{k times}}{\overbrace{N\overline{\otimes}\cdots\overline{\otimes}N}}$. 
Then we let $L_k:N^k\to N^\infty$ be the unique $w^*$-continuous 
$\ast$-homomorphism such that:
\begin{align*}
L_k(z_1\otimes\cdots\otimes z_k)=\cdots\otimes 1_N\otimes z_1\otimes\dots\otimes z_k\otimes 1_N\cdots,\qquad z_1,\ldots,z_k\in N,
\end{align*}
where $z_j$ is at position $j$ for all $1\leq j\leq k$.
For the rest of the paper, we identify $N^k$ with its image $L_k(N^k)$.

\begin{lemme}\label{prop fonctions prefaiblement continues}

\  

\begin{enumerate}
\item Let $f\in L^\infty_\sigma(\Sigma,N)$, $k\in \N$,
and let $A:L^\infty_{\sigma}(\Sigma,N^k)\to L^\infty_\sigma(\Sigma,N^\infty)$ be 
defined by 
\begin{align*}
\begin{array}{ccccccccc}
A(g) =\cdots1_N\otimes 1_N\otimes &f&\otimes &g&\otimes& 1_N&\cdots\\
&&&\uparrow&&\uparrow&\\
&&&1&&k+1&
\end{array}
\end{align*}
Then $A$ is $w^*$-continuous.
\item Let $\Psi\in L^\infty_\sigma(\Sigma,N^\infty)$
and let $B:L^\infty_{\sigma}(\Sigma,N^\infty)\to L^\infty_\sigma(\Sigma^2,N^\infty)$ 
be defined by 
$$
B(\Phi) = \tilde{\Phi\times\Psi}.
$$
Then $B$ is $w^*$-continuous.
\end{enumerate}
\end{lemme}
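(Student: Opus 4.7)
The overall plan is to identify both $A$ and $B$ as compositions or adjoints of $w^*$-continuous operations on von Neumann algebras. Two recurring tools do most of the work: the separate $w^*$-continuity of multiplication in a von Neumann algebra, and the fact that $Id_{L^\infty(\Sigma)}\overline{\otimes}\sigma$ is $w^*$-continuous whenever $\sigma$ is a normal $\star$-homomorphism between von Neumann algebras.

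For (1), I would work through the identification $L^\infty_\sigma(\Sigma,N^\infty)\simeq L^\infty(\Sigma)\overline{\otimes}N^\infty$ from (\ref{th identification lsigmainfty ...}). Let $L_0:N\to N^\infty$ be the normal $\star$-embedding placing its argument at position $0$, and $\widetilde{L}:N^k\to N^\infty$ the normal $\star$-embedding occupying positions $1,\ldots,k$. Setting $\widetilde{f}:=(Id_{L^\infty(\Sigma)}\overline{\otimes}L_0)(f)$, I claim
\begin{equation*}
A(g)\;=\;\widetilde{f}\cdot (Id_{L^\infty(\Sigma)}\overline{\otimes}\widetilde{L})(g),
\end{equation*}
the product being taken in the von Neumann algebra $L^\infty(\Sigma)\overline{\otimes}N^\infty$. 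This identity is immediate on elementary tensors $g=\phi\otimes y$ (where $L_0(x_f)\cdot\widetilde{L}(y)\in N^\infty$ is precisely the element having $x_f$ at position $0$, $y$ at positions $1,\ldots,k$, and $1_N$'s elsewhere), and it extends by linearity and $w^*$-density of $L^\infty(\Sigma)\otimes N^k$ in $L^\infty_\sigma(\Sigma,N^k)$. The right-hand side is the composition of the $w^*$-continuous map $Id\overline{\otimes}\widetilde{L}$ followed by left multiplication by the fixed element $\widetilde{f}$, so $A$ is $w^*$-continuous.

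For (2), I would exhibit $B$ as the adjoint of an explicit bounded pre-adjoint. Let $T_\Psi\in B(L^1(\Sigma),N^\infty)$ be the operator associated with $\Psi$ via (\ref{equalite eta}). Using (\ref{L1}) together with $L^1(\Sigma^2)\simeq L^1(\Sigma)\hat{\otimes}L^1(\Sigma)$, I identify
\[
L^1(\Sigma^2,(N^\infty)_*)\;\simeq\;L^1(\Sigma)\hat{\otimes}L^1(\Sigma)\hat{\otimes}(N^\infty)_*.
\]
The trilinear map $(h_1,h_2,\eta)\mapsto h_1\otimes\bigl(T_\Psi(h_2)\eta\bigr)$, where $T_\Psi(h_2)\eta\in(N^\infty)_*$ is as defined in Section 2, satisfies $\|h_1\otimes T_\Psi(h_2)\eta\|\leq \|T_\Psi\|\|h_1\|_1\|h_2\|_1\|\eta\|$, so it extends to a bounded linear map $B_*:L^1(\Sigma^2,(N^\infty)_*)\to L^1(\Sigma,(N^\infty)_*)$. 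Combining (\ref{eq phitildetimes phi}) with the formula (\ref{equalite eta}), one computes on an elementary tensor $h_1\otimes h_2\otimes\eta$:
\begin{equation*}
\langle B(\Phi),h_1\otimes h_2\otimes\eta\rangle\;=\;\langle T_\Phi(h_1),T_\Psi(h_2)\eta\rangle\;=\;\langle\Phi,B_*(h_1\otimes h_2\otimes\eta)\rangle.
\end{equation*}
Both sides are bounded linear in the predual variable (for fixed $\Phi$), so the identity extends by density to all of $L^1(\Sigma^2,(N^\infty)_*)$, giving $B=B_*^*$ and hence the $w^*$-continuity of $B$.

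The main work in both items is thus the passage from the intrinsically \emph{pointwise} formulas defining $A$ and $B$ to their reformulations through the von Neumann tensor product or the predual; once those reformulations are established on dense subspaces, $w^*$-continuity is automatic. I expect the most delicate step to occur in (2), where the three-fold projective-tensor identification must be traced carefully to read off the pre-adjoint formula, and (\ref{eq phitildetimes phi}) must be correctly interpreted as a pairing between $L^\infty_\sigma(\Sigma^2,N^\infty)$ and its predual.
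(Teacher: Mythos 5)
Your treatment of part (2) is correct and is in substance the paper's own argument: the chain
$\langle \tilde{\Phi\times\Psi},h_1\otimes h_2\otimes\eta\rangle=\langle T_\Phi(h_1)T_\Psi(h_2),\eta\rangle=\langle T_\Phi(h_1),T_\Psi(h_2)\eta\rangle=\langle\Phi,h_1\otimes T_\Psi(h_2)\eta\rangle$
is exactly the relation the paper exploits in \eqref{eq B tilde}; you merely package it as an explicit bounded pre-adjoint $B_*$ instead of testing against bounded $w^*$-convergent nets, which is if anything cleaner (it avoids the implicit appeal to the Krein--Smulian theorem). The extension step there is legitimate, because it is a \emph{norm}-density argument between two bounded linear functionals on $L^1(\Sigma^2,(N^\infty)_*)$.

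Part (1), however, contains a genuine gap. The strategy --- factor $A$ as the normal $\star$-homomorphism $Id\overline{\otimes}\widetilde{L}$ followed by left multiplication by the fixed element $\widetilde{f}$, both manifestly $w^*$-continuous --- is a legitimate alternative, but your justification of the key identity $A(g)=\widetilde{f}\cdot(Id\overline{\otimes}\widetilde{L})(g)$ is circular. You verify it on $L^\infty(\Sigma)\otimes N^k$ and then ``extend by linearity and $w^*$-density''; yet to propagate an identity between $A$ and a $w^*$-continuous map from a $w^*$-dense subspace to all of $L^\infty_\sigma(\Sigma,N^k)$, you must already know that $A$ --- defined by the \emph{pointwise} formula on the whole space --- is $w^*$-continuous, which is precisely the conclusion of the lemma. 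The distinction is not cosmetic: in Lemma \ref{prop uk=gamma...} the map is applied to elements that are genuine $w^*$-limits, and what is used there is that the pointwise formula commutes with those limits, so one cannot simply redefine $A$ as the continuous extension from the dense subspace. The repair is to verify the identity for an arbitrary $g\in L^\infty_\sigma(\Sigma,N^k)$ by pairing both sides against $l\otimes\eta\otimes\mu$ with $l\in L^1(\Sigma)$, $\eta\in N_*$, $\mu\in N^k_*$: the left-hand side equals $\int_\Sigma l(t)\langle f(t),\eta\rangle\langle g(t),\mu\rangle\,dt$ directly from the pointwise definition, and the right-hand side gives the same after identifying $(l\otimes\eta\otimes\mu)\widetilde{f}$ with the element $\bigl(t\mapsto l(t)\,\eta f(t)\bigr)\otimes\mu$ of the predual (which itself requires a word on the measurability of $t\mapsto\eta f(t)$). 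This computation is in effect what the paper does directly: it establishes $\langle A(h_i),l\otimes\eta\otimes\mu\rangle=\int_\Sigma l(t)\langle f(t),\eta\rangle\langle h_i(t),\mu\rangle\,dt$ for a bounded net and absorbs $l(t)\langle f(t),\eta\rangle$ into an $L^1$ test function, bypassing the tensor-product reformulation altogether.
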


\begin{proof}
In the definition of $A$, the $1_N$ play no role. So we may consider $A:L^\infty_{\sigma}(\Sigma,N^k)\to L^\infty_\sigma(\Sigma,N\overline{\otimes}N^k)$ 
such that $A(h)=f\otimes h$, instead of the map given in the statement. 
To prove that $A$ is $w^*$-continuous, it suffices to show that for any 
bounded net $(h_i)$ which converges to $h$ in the $w^*$-topology of 
$L^\infty_\sigma(\Sigma, N^k)$ and for any $V\in (L_\sigma^\infty(\Sigma,N\overline{\otimes}N^k))_*=L^1(\Sigma,(N\overline{\otimes}N^k)_*)$, we have
\begin{align}\label{eq A est bien precontinue}
\langle A(h_i),V\rangle\to\langle A(h),V\rangle.
\end{align}
We know that $L^1(\Sigma)\otimes N_*\otimes N^k_*$ is dense in $L^1(\Sigma,(N\overline{\otimes}N^k))_*$ (see the beginning of section 2).
Let $(h_i)\subset L^\infty_\sigma(\Sigma,N^k)$ be a bounded net which $w^*$-converges to 
$h$. Then for all $v\in L^1(\Sigma)$ and $\mu\in N^k_*$, we have:
\begin{align}\label{eq convergence gi}
\langle h_i,v\otimes \mu\rangle\to\langle h, v\otimes \mu\rangle
\end{align}
We note that for all $\eta\in N_*$, $\mu\in N^k_*$ and $l\in L^1(\Sigma)$:
\begin{align*}
\langle A(h_i),l\otimes \eta\otimes \mu\rangle=\int_\Sigma l(t)\langle f(t),\eta\rangle\langle h_i(t),\mu\rangle dt.
\end{align*}
Using \eqref{eq convergence gi} with $v(t)=l(t)\langle f(t),\eta\rangle$, we derive 
\begin{align*}
\langle A(h_i),l\otimes \eta\otimes \mu\rangle\to\langle A(h),l\otimes \eta\otimes \mu\rangle.
\end{align*}
By linearity and density, and by the boundedness of $(h_i)$, we obtain \eqref{eq A est bien precontinue}.

Now we prove the $w^*$- continuity of $B$. 
We let  
$$
\tilde{B}: B(L^1(\Sigma),N^\infty)\to B(L^1(\Sigma^2),N^\infty)
$$
be the operator corresponding to $B$ if we use the identifications $B(L^1(\Sigma);N^\infty)=
L^\infty_\sigma(\Sigma,N^\infty)$
and $B(L^1(\Sigma^2);N^\infty)=L^\infty_\sigma(\Sigma^2,N^\infty)$.
We will prove that $\tilde{B}$ is $w^*$-continuous.
Let $S\in B(L^1(\Sigma);N^\infty)$ 
be corresponding to $\Psi$. Then 
for all $T\in B(L^1(\Sigma),N^\infty)$ and for all $f,h\in L^1(\Sigma)$, we have
\begin{align*}
\tilde{B}(T)(f\otimes h)=T(f)S(h)
\end{align*}
We know that $L^1(\Sigma)\otimes L^1(\Sigma)\otimes N^\infty_*$ is dense in $L^1(\Sigma^2;N^\infty_*)=B(L^1(\Sigma^2);N^\infty)_*$. 
For all $f,h\in L^1(\Sigma)$ and $\eta \in N^\infty_*$, we have
\begin{align}\label{eq B tilde}
\langle \tilde{B}(T),f\otimes h\otimes \eta\rangle=\langle T(f)S(h),\eta\rangle=\langle T(f),S(h)\eta\rangle=\langle T,f\otimes S(h)\eta\rangle.
\end{align}
Let $ (T_i)\subset B(L^1(\Sigma);N^\infty)$ be a bounded net which converges to $T\in B(L^1(\Sigma);N^\infty)$ in the $w^*$-topology. We have:
\begin{align*}
\langle T_i,f\otimes S(h)\eta\rangle\to \langle T,f\otimes S(h)\eta\rangle
\end{align*}
We therefore obtain, thanks to \eqref{eq B tilde}, that 
\begin{align*}
\langle \tilde{B}(T_i),f\otimes h\otimes \eta\rangle\to \langle \tilde{B}(T),f\otimes h\otimes \eta\rangle.
\end{align*}
By linearity, by density and 
by the boundedness of $(T_i)$, 
we obtain that for all $V\in L^1(\Sigma,N^\infty_*)$:
\begin{align*}
\langle \tilde{B}(T_i),V\rangle\to \langle \tilde{B}(T),V\rangle.
\end{align*}
This shows the $w^*$-continuity of $\tilde{B}$, and hence that of $B$.
\end{proof}

\begin{rmq}\label{remarque prefaiblecontinuité}
Let $\Psi\in L_\sigma^\infty(\Sigma,N^\infty)$, $\theta\in L^2(\Sigma^2)$ and $y\in N^\infty$.
We deduce from the previous lemma the $w^*$-continuity of the map $\Phi\mapsto  \theta(\tilde{\Phi\times\Psi})y$ from $L_\sigma^\infty(\Sigma,N^\infty)$ into $L^2_\sigma(\Sigma^2,N^\infty)$.
\end{rmq}

The following lemma explains how to swap $d$ and $\Gamma$.

\begin{lemme}\label{lemme d et gamma}
For all $\theta\in L^2(\Sigma^2)$ and for all $y=\dots\otimes y_{-1}\otimes 1_N\otimes y_1\dots\in N^\infty$, we have:
\begin{align}\label{formule d et Gamma}
d\Gamma(\theta\otimes y)d=\Gamma\left((s,t)\mapsto \theta(s,t)[\cdots\otimes y_{-1}\otimes \tilde{F\times F}(s,t)\otimes y_1\otimes\cdots]\right)
\end{align}
\end{lemme}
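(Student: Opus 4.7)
The plan is to prove a bilinear strengthening of the identity and then specialize. For any $F_1,F_2\in L^\infty_\sigma(\Sigma,N)$, let $d_1,d_2\in L^\infty(\Sigma)\overline{\otimes} N^\infty\subset B(L^2(\Sigma))\overline{\otimes} N^\infty$ be the elements obtained from $F_1,F_2$ in the same way $d$ was obtained from $F$, by embedding the representative at position $0$ and tensoring with $1_N$ elsewhere. I claim
$$
d_1\,\Gamma(\theta\otimes y)\,d_2 \,=\, \Gamma\!\bigl((s,t)\mapsto \theta(s,t)\bigl[\cdots\otimes y_{-1}\otimes \widetilde{F_1\times F_2}(s,t)\otimes y_1\otimes\cdots\bigr]\bigr), \qquad (*)
$$
and Lemma \ref{lemme d et gamma} follows by taking $F_1=F_2=F$.

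First I would verify $(*)$ by direct computation on the algebraic tensor product $L^\infty(\Sigma)\otimes N$. Both sides are bilinear in $(F_1,F_2)$, so it suffices to treat $F_1=\phi_1\otimes z_1$, $F_2=\phi_2\otimes z_2$ with $\phi_i\in L^\infty(\Sigma)$ and $z_i\in N$. Then $d_i=M_{\phi_i}\otimes (z_i)_{(0)}$, where $M_{\phi_i}$ denotes multiplication by $\phi_i$ on $L^2(\Sigma)$ and $(z_i)_{(0)}$ denotes $z_i$ at position $0$ in $N^\infty$, and a straightforward calculation gives
$$
d_1(S_\theta\otimes y)d_2 \,=\, S_{(\phi_1\otimes\phi_2)\,\theta}\otimes\bigl[\cdots\otimes y_{-1}\otimes z_1z_2\otimes y_1\otimes\cdots\bigr],
$$
using crucially that $y_0=1_N$ to collapse $z_1\cdot 1_N\cdot z_2=z_1z_2$ at position $0$. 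Remark \ref{rmq produit tilde simple} identifies $\widetilde{F_1\times F_2}(s,t)=\phi_1(s)\phi_2(t)\,z_1z_2$, so the right-hand side of $(*)$ equals the same expression.

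Next I would extend $(*)$ to arbitrary $F_1,F_2\in L^\infty_\sigma(\Sigma,N)$ by separate $w^*$-continuity, in two successive passages, using the $w^*$-density of $L^\infty(\Sigma)\otimes N$ in $L^\infty_\sigma(\Sigma,N)$. Both sides are separately $w^*$-continuous in each $F_i$: on the left, $F_i\mapsto d_i$ is $w^*$-continuous by Theorem \ref{th identification lsigmainfty ...} combined with normality of the canonical embedding $N\hookrightarrow N^\infty$ at position $0$, and one-sided multiplication in a von Neumann algebra is $w^*$-continuous; on the right, continuity in the first argument is exactly Remark \ref{remarque prefaiblecontinuité}, and continuity in the second argument follows from a symmetric adaptation of the proof of Lemma \ref{prop fonctions prefaiblement continues}(2) with the roles of $\Phi$ and $\Psi$ interchanged.

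The main obstacle is precisely the reason $(*)$ must be stated bilinearly: the map $d\mapsto dXd$ is quadratic and so not jointly $w^*$-continuous, which would defeat a direct approximation of $F$ by elementary tensors. Splitting the two occurrences of $d$ into independent variables $d_1,d_2$ and then extending one variable at a time sidesteps this problem. The secondary technical point is the symmetric version of Lemma \ref{prop fonctions prefaiblement continues}(2), which is routine but needs to be checked explicitly.
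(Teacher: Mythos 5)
Your proposal is correct and follows essentially the same route as the paper: the paper likewise decouples the two occurrences of $d$ into independent approximants $\phi,\phi'\in{\mathcal E}\otimes N^\infty$, verifies the identity by direct computation on these elementary tensors using Remark \ref{rmq produit tilde simple}, and then passes to the limit one factor at a time via Lemma \ref{prop fonctions prefaiblement continues} and Remark \ref{remarque prefaiblecontinuité}, exactly to avoid the quadratic (non-$w^*$-continuous) dependence you identify. The only differences are cosmetic (you approximate in $L^\infty(\Sigma)\otimes N$ and embed at position $0$ afterwards, the paper approximates $d$ directly in ${\mathcal E}\otimes N^\infty$).
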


\begin{proof} Recall ${\mathcal E}$ given by (\ref{E}).
Instead of $d$, we first consider approximations by fnite sums
\begin{align*}
\phi= \sum_{i}\chi_{E_i}\otimes\dots\otimes 1_N\otimes  m_i\otimes 1_N\otimes\cdots\ \in {\mathcal E}\otimes N^\infty
\end{align*}
and
\begin{align*}
\phi'= \sum_{j}\chi_{E_j'}\otimes\dots\otimes 1_N\otimes m_j'\otimes 1_N\otimes\cdots\ \in  {\mathcal E}\otimes N^\infty.
\end{align*}  
We have:
\begin{align*}
\phi\Gamma (\theta\otimes y)\phi'&=\phi(S_\theta\otimes y)\phi'\\
&=\sum_{i,j}\chi_{E_i}.S_\theta.\chi_{E_j'}\otimes \dots\otimes y_{-1}\otimes m_i m_j'\otimes y_1\otimes\cdots\\
&=\sum_{i,j}\Gamma\left((s,t)\mapsto \chi_{E_i}(s)\theta(s,t)\chi_{E_j'}(t)[\dots\otimes y_{-1}\otimes m_i m_j'\otimes y_1\otimes\cdots]\right)\\
&=\Gamma\left((s,t)\mapsto \theta(s,t)\Bigl[\dots\otimes y_{-1}\otimes
\Bigl(\sum_i\chi_{E_i}(s) m_i\Bigr)\Bigl(\sum_{j}\chi_{E_j'}(t)m_j'\Bigr)\otimes y_1\otimes\cdots\Bigr]\right)\\
&=\Gamma\left((s,t)\mapsto \theta(s,t)\phi(s)\phi'(t)y\right).
\end{align*}
Here $\phi$ and $\phi'$ are measurable, hence
we have $\phi(s)\phi'(t)= \tilde{\phi\times\phi'}(s,t)$, by 
Remark \ref{rmq produit tilde simple}. Hence the preceding equality gives:
\begin{align*}
\phi\Gamma (\theta\otimes y)\phi'=\Gamma\bigl((s,t)\mapsto \theta(s,t)\tilde{\phi\times\phi'}(s,t)y\bigr)
\end{align*}
In this identity, the maps $\phi$ and $\phi'$ are 
regarded as elements of $B(L^2(\Sigma))\overline{\otimes}N^\infty$
on the left hand-side, and as 
elements of $L^\infty(\Sigma,N^\infty)$ on the right hand-side.
Using the facts that $\mathcal E\otimes N^\infty$ is $w^*$-dense in $L^\infty_\sigma(\Sigma,N^\infty)$, that $\Gamma$ is $w^*$- continuous, as well as
Lemma \ref{prop fonctions prefaiblement continues} and Remark \ref{remarque prefaiblecontinuité}, we deduce that
\begin{align*}
\phi\Gamma(\theta\otimes y)d=\Gamma\left((s,t)\mapsto \theta(s,t)(\phi\tilde{\times} F^\infty)(s,t)y\right)
\end{align*}
and then
\begin{align*}
d\Gamma(\theta\otimes y)d=\Gamma\left((s,t)\mapsto \theta(s,t)(\tilde{F^\infty\times F^\infty})(s,t)y\right).
\end{align*}
The last thing to observe is that $F^\infty\tilde{\times}F^\infty =\dots \otimes 1_N\otimes F\tilde{\times} F\otimes 1_N\otimes\cdots$ . The latter 
is true by the uniqueness of the construction.
\end{proof}

\begin{lemme}\label{prop uk=gamma...}
For all $f,h\in L^2(\Sigma)$, and  for all integer $k\geq 0$,  we have:
\begin{align}
\begin{array}{ccccc}
U^kJ(S_{f\otimes h})=\Gamma\Bigl((s,t)\mapsto f(s)h(t)[\dots \otimes 1_N\otimes &F\tilde{\times}F(s,t)&\otimes\dots\otimes&F\tilde{\times} F(s,t)&\otimes1_N\otimes\cdots]\Bigr).\\
&\uparrow&&\uparrow&\\
&0&&(k-1)&
\end{array}
\end{align}
\end{lemme}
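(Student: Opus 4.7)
The plan is to proceed by induction on $k$.

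For $k=0$, the element inside $\Gamma$ on the right-hand side is simply $(s,t)\mapsto f(s)h(t)\cdot 1_{N^\infty}$ (no occurrence of $F\tilde{\times}F$), and $\Gamma((f\otimes h)\otimes 1_{N^\infty})=S_{f\otimes h}\otimes 1_{N^\infty}=J(S_{f\otimes h})$, so the base case holds.

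For the inductive step, denote by $\Psi_k\in L^2_\sigma(\Sigma^2,N^\infty)$ the element appearing inside $\Gamma$ at rank $k$. Since $U(y)=d\,(Id\otimes\mathcal{S})(y)\,d$, I would write
\[
U^{k+1}J(S_{f\otimes h})=d\,(Id\otimes\mathcal{S})\Gamma(\Psi_k)\,d
\]
and treat the two operations in turn. First, Lemma \ref{prop R} gives $(Id\otimes\mathcal{S})\Gamma(\Psi_k)=\Gamma(R(\Psi_k))$, and the shift $R$ transports each tensor leg of $\Psi_k(s,t)$ one position to the right; the $k$ copies of $F\tilde{\times}F(s,t)$ that occupied positions $0,\dots,k-1$ in $\Psi_k$ now sit at positions $1,\dots,k$ in $R(\Psi_k)$, and position $0$ becomes $1_N$. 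Then I would apply Lemma \ref{lemme d et gamma} to compute $d\,\Gamma(R(\Psi_k))\,d$: the conjugation by $d$ replaces the $1_N$ at position $0$ by $F\tilde{\times}F(s,t)$ while leaving the other legs untouched, yielding precisely the formula claimed at rank $k+1$.

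The main obstacle is that Lemma \ref{lemme d et gamma} is stated only for elementary tensors $\theta\otimes y$ with $y\in N^\infty$ fixed, whereas $R(\Psi_k)$ has honestly $(s,t)$-dependent tensor legs (through the copies of $F\tilde{\times}F$ at positions $1,\dots,k$). The required extension is obtained by rerunning the proof of that lemma: approximate $d$ by finite sums $\phi=\sum_i\chi_{E_i}\otimes(\cdots\otimes 1_N\otimes m_i\otimes 1_N\otimes\cdots)\in\mathcal{E}\otimes N^\infty$, compute $\phi\,\Gamma(R(\Psi_k))\,\phi'$ leg-by-leg, invoke Remark \ref{rmq produit tilde simple} to identify the pointwise product $\phi(s)\phi'(t)$ with $\tilde{\phi\times\phi'}(s,t)$, and pass to the $w^*$-limit via the $w^*$-continuity of $\Gamma$ and of the maps in Lemma \ref{prop fonctions prefaiblement continues} together with Remark \ref{remarque prefaiblecontinuité}. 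This is the only real technical point; once it is in place, the induction closes immediately.
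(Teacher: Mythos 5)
Your proof is correct and follows essentially the same route as the paper's: induction on $k$, with the same base case, and the inductive step resolved by approximating with elementary tensors so that Lemma \ref{lemme d et gamma} applies, then passing to the $w^*$-limit via Lemma \ref{prop fonctions prefaiblement continues} and Remark \ref{remarque prefaiblecontinuité}. The only organizational difference is that you apply the shift globally first via Lemma \ref{prop R} and only then approximate for the conjugation by $d$, whereas the paper approximates $\Psi_k$ by elementary tensors at the outset and applies $U$ term by term; the technical content --- in particular the need to extend Lemma \ref{lemme d et gamma} beyond elementary tensors, which you correctly single out as the crux --- is identical.
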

\begin{proof}
We prove this lemma by induction. The result is true for $k=0$ since
\begin{align*}
J(S_{f\otimes h})&=S_{f\otimes h}\otimes\dots\otimes 1_N\otimes\cdots\\
&=\Gamma ((f\otimes h)\otimes 1_{N^\infty})\\
&=\Gamma\bigl((s,t)\mapsto f(s)h(t)[\dots\otimes 1_N\otimes\cdots]\bigr).
\end{align*}
We now suppose that the result holds true for some $k\geq 0$. By
the $w^*$-density of $L^\infty(\Sigma^2)\otimes N$ into $L_\sigma^\infty(\Sigma^2,N)$,
we may write
\begin{align*}
&(s,t)\mapsto f(s)h(t)[\cdots\otimes 1_N\otimes F\tilde{\times}F(s,t)\otimes\cdots\otimes F\tilde{\times}F(s,t)\otimes 1_N\otimes\cdots]\\
&= w^*-\lim_i\sum_{j \text{ finite}}\theta_j^i\otimes\cdots\otimes 1_N\otimes m_{i,j}^0\otimes\cdots\otimes m_{i,j}^{k-1}\otimes 1_N\otimes\cdots,\\
\end{align*}
where $\theta_j^i\in L^\infty(\Sigma^2)$ and $m_{i,j}^l\in N$ for all $i,j,l$. 
We have:
\begin{align*}
&U^{k+1}(J(S_{f\otimes h}))=U(U^kJ(S_{f\otimes h}))\\
&= U(\Gamma((s,t)\mapsto f(s)h(t)[\cdots \otimes 1_N\otimes F\tilde{\times}F(s,t)\otimes\cdots\otimes F\tilde{\times}F(s,t)\otimes 1_N\otimes\cdots])\\
&=U\left(\Gamma\left( w^*-\lim_i\sum_{j \text{ finite}}\theta_j^i\otimes\cdots\otimes 1_N\otimes m_{i,j}^0\otimes\cdots\otimes m_{i,j}^{k-1}\otimes 1_N\otimes\cdots\right)\right)\\
&= w^*-\lim_i\sum_{j \text{ finite}} U\left(\Gamma\left(\theta_j^i\otimes\cdots\otimes 1_N\otimes m_{i,j}^0\otimes\cdots\otimes m_{i,j}^{k-1}\otimes 1_N\otimes\cdots\right)\right).
\end{align*}
By w$^*$-continuity of $\Gamma$ and $U$,
\begin{align*}
&= w^*-\lim_i\sum_{j \text{ finite}} d\left(((Id\otimes \mathcal{S}) (\Gamma(\theta_j^i\otimes\cdots\otimes 1_N\otimes m_{i,j}^0\otimes\cdots\otimes m_{i,j}^{k-1}\otimes 1_N\otimes\cdots))\right)d\\
&= w^*-\lim_i\sum_{j \text{ finite}} d(\Gamma(\theta_j^i\otimes \mathcal{S}(\cdots\otimes 1_N\otimes m_{i,j}^0\otimes\cdots\otimes m_{i,j}^{k-1}\otimes 1_N\otimes\cdots)))d.
\end{align*}
By Lemma \ref{lemme d et gamma}, the latter is equal to
\begin{align*}
&\begin{array}{ccccc}
w^*-\lim\limits_i \sum\limits_j \Gamma\bigl((s,t)\mapsto \theta^i_j(s,t)[\cdots\otimes 1_N\otimes& F\tilde{\times}F(s,t)&\otimes m_{i,j}^0\otimes \cdots\otimes& m_{i,j}^{k-1}&\otimes 1_N\otimes\cdots]\bigr)\\
&\uparrow&&\uparrow&\\
&0&&k&
\end{array}\\
\end{align*}
By Lemma \ref{prop fonctions prefaiblement continues}, (1), and 
the w$^*$-continuity of $\Gamma$, this is equal to
\begin{align*}
&\begin{array}{ccccccc}
\Gamma\bigl((s,t)\mapsto [\cdots\otimes &F\tilde{\times}F(s,t)&\otimes \Bigl(w^*-\lim_i\sum_{j \text{ finite}} \theta^i_j(s,t)m_{i,j}^0\otimes\cdots\otimes m_{i,j}^{k-1}\Bigr)\otimes &1_N&\cdots]\bigr)\\
&\uparrow&&\uparrow&\\
&0&&k+1&
\end{array}
\end{align*}
We let $\psi\in L^2_\sigma(\Sigma,N^\infty)$ be defined by \begin{align*}
\psi(s,t)=\cdots\otimes F\tilde{\times}F(s,t)\otimes 
\Bigl( w^*-\lim_i\sum_{j \text{ finite}} \theta^i_j(s,t)m_{i,j}^0\otimes\cdots\otimes m_{i,j}^{k-1}\Bigr)\otimes 1_N\cdots.
\end{align*}
Then we have obtained that
\begin{align*}
U^{k+1}(J(S_{f\otimes h}))= \Gamma(\psi)
\end{align*}
Recall the mapping 
$R: L^2(\Sigma^2,N^\infty)\to L^2(\Sigma^2,N^\infty)$ from Lemma \ref{prop R}. Then we have
\begin{align*}
\psi&= [\cdots\otimes F\tilde{\times}F\otimes ( w^*-\lim_i\sum_{j \text{ finite}} \theta^i_jm_{i,j}^0\otimes\cdots\otimes m_{i,j}^{k-1})\otimes 1_N\cdots].\\
&\begin{array}{ccccccc}
= [\cdots\otimes &F\tilde{\times}F&\otimes 1_N\cdots] w^*-\lim\limits_i\sum\limits_{j \text{ finite}} \theta^i_j[\cdots\otimes &1_N&\otimes m_{i,j}^0\otimes\cdots\otimes&m_{i,j}^{k-1}&\otimes\cdots].\\
&\uparrow&&\uparrow&&\uparrow&\\
&0&&0&&k&
\end{array}\\
&\begin{array}{ccccccc}
=[\cdots\otimes &F\tilde{\times} F(s,t)&\otimes 1_N\cdots] w^*-\lim\limits_iR(\sum\limits_{j \text{ finite}} \theta^i_j(s,t)[\cdots\otimes &m_{i,j}^0&\otimes\cdots\otimes& m_{i,j}^{k-1}&\otimes\cdots])\\
&\uparrow&&\uparrow&&\uparrow&\\
&0&&0&&(k-1)&
\end{array}\\
\end{align*}
By the $w^*$- continuity of $R$,
\begin{align*}
&=[\cdots\otimes F\tilde{\times} F\otimes 1_N\cdots]R( w^*-\lim_i\sum_{j \text{ finite}} \theta^i_j[\cdots\otimes m_{i,j}^0\otimes\cdots\otimes m_{i,j}^{k-1}\otimes\cdots])\\
&\begin{array}{ccccccc}
=[\cdots\otimes &F\tilde{\times}F&\otimes 1_N\cdots](f\otimes h)[\cdots\otimes &1_N&\otimes F\tilde{\times}F\otimes\cdots\otimes &F\tilde{\times}F&\otimes \cdots]\\
&\uparrow&&\uparrow&&\uparrow&\\
&0&&0&&k&
\end{array}\\
&\begin{array}{ccccc}
= f\otimes h[\cdots \otimes 1_N\otimes &F\tilde{\times}F&\otimes\cdots\otimes&F\tilde{\times}F&\otimes 1_N\otimes\cdots]).\\
&\uparrow&&\uparrow&\\
&0&&k&
\end{array}
\end{align*}
We obtain the result for $k+1$. This proves the lemma.
\end{proof}

We now conclude the proof of Theorem (\textbf{A}) by proving 
\begin{equation}
(M_\varphi^\infty)^k=\E U^kJ,\qquad k\geq 0.
\end{equation}

By linearity, density and duality, it suffices to prove that 
for all $f,h\in L^2(\Sigma)$ and for all $u,v\in L^2(\Sigma)$, we have 
\begin{align}\label{eq dualite dernier calcul}
\left\langle \E U^kJ(S_{f\otimes h}),u\otimes v\right\rangle=\left\langle M_\varphi^k(S_{f\otimes h}),u\otimes v\right\rangle.
\end{align}
Let $\gamma : \bigl(B(L^2(\Sigma))\overline{\otimes}N^\infty\bigr)_*
\to L^2(\Sigma^2,N^\infty_*)$ such that $\gamma^*=\Gamma$. 
We write
\begin{align*}
\left\langle \E U^kJ(S_{f\otimes h}),u\otimes v\right\rangle=\left\langle  U^kJ(S_{f\otimes h}),J_1(u\otimes v)\right\rangle.
\end{align*}
By lemma \ref{prop uk=gamma...}, the latter is equal to
\begin{align*}
&=\left\langle  \Gamma((s,t)\mapsto f(s)h(t)[\cdots\otimes F\tilde{\times}F(s,t)\otimes\cdots\otimes F\tilde{\times}F(s,t)\otimes 1_N\otimes\cdots]),J_1(u\otimes v)\right\rangle\\
&=\left\langle  (s,t)\mapsto f(s)h(t)[\cdots\otimes F\tilde{\times}F(s,t)\otimes\cdots\otimes F\tilde{\times}F(s,t)\otimes 1_N\otimes\cdots],\gamma(J_1(u\otimes v))\right\rangle\\
&=\left\langle  (s,t)\mapsto f(s)h(t)[\cdots\otimes F\tilde{\times}F(s,t)\otimes\cdots\otimes F\tilde{\times}F(s,t)\otimes 1_N\otimes\cdots],\gamma(u\otimes v\otimes 1_{N^\infty_*})\right\rangle\\
&=\left\langle  (s,t)\mapsto f(s)h(t)[\cdots\otimes F\tilde{\times}F(s,t)\otimes\cdots\otimes F\tilde{\times}F(s,t)\otimes 1_N\otimes\cdots],(s,t)\mapsto u(s)v(t) 1_{N^\infty_*})\right\rangle\\
&=\int_{\Sigma^2}\left\langle f(s)h(t)[\cdots\otimes F\tilde{\times}F(s,t)\otimes\cdots\otimes F\tilde{\times}F(s,t)\otimes 1_N\otimes\cdots], u(s)v(t) 1_{N^\infty_*})\right\rangle dsdt.\\
\end{align*}
Consequently,
\begin{align*}
\left\langle \E U^kJ(S_{f\otimes h}),u\otimes v\right\rangle &=\int_{\Sigma^2}f(s)h(t)u(s)v(t)\langle F\tilde{\times}F(s,t),1_{N_*}\rangle^kdsdt\\
&=\int_{\Sigma^2}f(s)h(t)u(s)v(t)\tau( F\tilde{\times}F(s,t))^kdsdt\\
&=\int_{\Sigma^2}f(s)h(t)u(s)v(t)\varphi(s,t)^kdsdt,
\text{ by Lemma \ref{varphi=tau FF}}\\
&=\left\langle M_\varphi^k(S_{f\otimes h}),u\otimes v\right\rangle
\end{align*}
This shows \eqref{eq dualite dernier calcul} and concludes the proof.

By Lemma \ref{lem absdila implique dila p}, we have the following
explicit consequence of Theorem (\textbf{A}).

\begin{theorem}
Let $\Sigma$ be a $\sigma$-finite measure space. 
Let $M_\varphi$ is a self-adjoint, unital, positive Schur multiplier on $B(L^2(\Sigma))$.
Then for all $1\leq p<\infty$, the map $(M_\varphi)_p:S^p(L^2(\Sigma))\to S^p(L^2(\Sigma))$ is dilatable.
\end{theorem}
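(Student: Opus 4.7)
The statement is a direct corollary of Theorem (\textbf{A}) combined with Lemma \ref{lem absdila implique dila p}, so my plan is simply to chain these two results together.

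First I would recall that the tracial von Neumann algebra in play is $(B(L^2(\Sigma)),\mathrm{Tr})$, where $\mathrm{Tr}$ is the canonical semifinite normal faithful trace. The associated non-commutative $L^p$-space is, by definition, the Schatten class:
\[
L^p\bigl(B(L^2(\Sigma)),\mathrm{Tr}\bigr)\,\simeq\, S^p(L^2(\Sigma)),\qquad 1\leq p<\infty.
\]
Under this identification, the contraction $(M_\varphi)_p$ produced by Lemma \ref{lem superdilatation} is exactly the Schur multiplier acting at the $L^p$ level.

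Next, I would invoke Theorem (\textbf{A}) which was established just above: under the present hypotheses on $\varphi$, the operator $M_\varphi : B(L^2(\Sigma))\to B(L^2(\Sigma))$ is absolutely dilatable in the sense of Definition \ref{rmq superdilatation}. Thus there exist $(N',\tau')$, a trace preserving one-to-one $\star$-homomorphism $J$ and a trace preserving $\star$-isomorphism $U$ satisfying the identity \eqref{DilPpty}.

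Finally, applying Lemma \ref{lem absdila implique dila p} with $(N,\tau)=(B(L^2(\Sigma)),\mathrm{Tr})$ yields at once that $(M_\varphi)_p$ is dilatable as an operator on $L^p(B(L^2(\Sigma)),\mathrm{Tr})=S^p(L^2(\Sigma))$ for every $1\leq p<\infty$. Since all the work has already been done — Theorem (\textbf{A}) provides the absolute dilation at the von Neumann algebra level, and Lemma \ref{lem absdila implique dila p} automates the transfer to every $L^p$ — there is no real obstacle; the proof is a one-line deduction.
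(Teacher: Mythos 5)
Your proposal is correct and matches the paper's own argument exactly: the paper derives this theorem in one line by combining Theorem (\textbf{A}) with Lemma \ref{lem absdila implique dila p}, using the identification $L^p(B(L^2(\Sigma)),\mathrm{Tr})\simeq S^p(L^2(\Sigma))$. Nothing further is needed.
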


\section{Dilatation of Fourier multipliers}
\subsection{Properties of Fourier multipliers}
This subsection provides some background on  Fourier multipliers
and is a preparation to the proof of Theorem (\textbf{B}). 
Let $G$ be a unimodular locally compact group. 
We use the notation introduced in the paragraph preceding
Theorem (\textbf{B}). In particular, we let
$\lambda : G\to B(L^2(G))$ be the left regular representation
defined by $[\lambda(s)f](t)=f(s^{-1}t)$. 
We also denote by
$\lambda : L^1(G)\to VN(G)\subset B(L^2(G))$ the mapping defined by 
$\lambda(f) = \int_G f(s)\lambda(s) ds$. (The use of
the same notation $\lambda$ for these two maps should
not create any confusion.) The above integral is 
defined in the strong sense. Indeed, for any 
$f_1\in L^1(G)$ and $f_2\in L^2(G)$,
$[\lambda(f_1)](f_2) = f_1 *f_2$, where
$$
(f_1 * f_2)(t) =\int_G f_1(s) f_2(s^{-1}t) ds,\qquad t\in G.
$$
We recall that for any $f_1,f_2\in L^2(G)$, the above
formula defines a function $f_1 * f_2\in C_0(G)$.

We note that if $T_u : VN(G)\to VN(G)$ is a Fourier multiplier 
with symbol $u : G\to \mathbb C$ which is a bounded continuous map, then 
$$
T_u(\lambda(f)) = \lambda(uf),\qquad f \in L^1(G).
$$

For any $f\in L^1(G)$, we define 
$f^\vee\in L^1(G)$ and  by 
$$
f^\vee(t)=f(t^{-1}),\quad t\in G\text{ and } f^*=\overline{\overset{\vee}{f}}
$$

\begin{lemme}\label{prop lambda lambda}
Let $f,h\in L^1(G)\cap L^2(G)$. We have:
\begin{enumerate}
\item $\lambda(f)^*\lambda(h)=\lambda(f^**h)$;
\item $\lambda(f)^*\lambda(h)\in L^1(VN(G))$ and $$
\omega_G(\lambda(f)^*\lambda(h))=\int_G\overline{f}(s)h(s)ds
=(f^*\ast h)(e),
$$
where $e$ denote the unit 
of $G$.
\end{enumerate}
\end{lemme}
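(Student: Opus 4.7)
For part (1), the plan is to verify the two building blocks $\lambda(f)^{*}=\lambda(f^\vee)$ and $\lambda(f_1)\lambda(f_2)=\lambda(f_1\ast f_2)$, and then compose them. For the adjoint identity, I would start from the strongly convergent integral $\lambda(f)=\int_G f(s)\lambda(s)\,ds$, take adjoints using $\lambda(s)^{*}=\lambda(s^{-1})$ to get $\lambda(f)^{*}=\int_G \overline{f(s)}\lambda(s^{-1})\,ds$, and apply the substitution $t=s^{-1}$ combined with unimodularity to rewrite this as $\int_G \overline{f(t^{-1})}\lambda(t)\,dt=\lambda(f^\vee)$. For the product formula, I would test on a vector $\xi\in L^2(G)$: since $\lambda(g)$ acts by convolution $g\ast \cdot$ on $L^1\cap L^2$, associativity of convolution gives $\lambda(f_1)\lambda(f_2)\xi = f_1\ast(f_2\ast\xi)=(f_1\ast f_2)\ast\xi=\lambda(f_1\ast f_2)\xi$. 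Combining yields $\lambda(f)^{*}\lambda(h)=\lambda(f^\vee)\lambda(h)=\lambda(f^\vee\ast h)$.

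For part (2), the natural route is to invoke the Plancherel theorem. Since $G$ is unimodular, the map $f\mapsto\lambda(f)$ defined on $L^1(G)\cap L^2(G)$ extends to an isometry $\Lambda:L^2(G)\to L^2(VN(G),\omega_G)$. Hence for $f,h\in L^1(G)\cap L^2(G)$, both $\lambda(f)$ and $\lambda(h)$ lie in $L^2(VN(G),\omega_G)$, and by the non-commutative Hölder inequality their product $\lambda(f)^{*}\lambda(h)$ lies in $L^1(VN(G),\omega_G)$. The trace is then identified via the inner product on $L^2(VN(G),\omega_G)$:
\[
\omega_G\bigl(\lambda(f)^{*}\lambda(h)\bigr)=\langle \lambda(h),\lambda(f)\rangle_{L^2(VN(G))}=\langle h,f\rangle_{L^2(G)}=\int_G \overline{f(s)}h(s)\,ds,
\]
where the middle equality is the Plancherel isometry (in its polarized form).

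The final equality with $(f^\vee\ast h)(e)$ is then a bare change of variable: writing out the convolution at the identity gives $(f^\vee\ast h)(e)=\int_G f^\vee(s)h(s^{-1})\,ds=\int_G \overline{f(s^{-1})}h(s^{-1})\,ds$, and the substitution $t=s^{-1}$ together with unimodularity turns this into $\int_G \overline{f(t)}h(t)\,dt$, matching the expression just computed.

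The only point that requires care, and which I see as the chief technical obstacle, is the appeal to Plancherel in part (2): one must know that for a unimodular locally compact $G$ the weight $\omega_G$ restricts to a trace whose $L^2$-space is isometric to $L^2(G)$ via $f\mapsto\lambda(f)$ on $L^1\cap L^2$. This is standard but uses the unimodularity hypothesis in an essential way; without it, $\omega_G$ would only be a weight and the clean identification $\|\lambda(f)\|_{L^2(VN(G))}=\|f\|_{L^2(G)}$ would fail. Everything else amounts to manipulating strongly convergent integrals and a change of variable.
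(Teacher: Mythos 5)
Your proof is correct. The paper itself gives no argument here --- it simply cites Str\u{a}til\u{a}--Zsid\'o 18.17(5) --- and what you have written is the standard proof of that cited fact: part (1) from $\lambda(f)^*=\lambda(f^\vee)$ (where unimodularity is needed so that $f^\vee$ rather than $\Delta(\cdot^{-1})\overline{f(\cdot^{-1})}$ appears) together with $\lambda$ being multiplicative for convolution, and part (2) from the defining property of the Plancherel trace, namely that $f\mapsto\lambda(f)$ is an $L^2(G)\to L^2(VN(G),\omega_G)$ isometry on $L^1\cap L^2$, polarized. You correctly identify this Plancherel identification as the one nontrivial input and the only place unimodularity enters essentially in part (2); the remaining steps (H\"older to get membership in $L^1(VN(G))$, and the change of variable giving $(f^\vee*h)(e)=\int_G\overline{f}h$) are routine.
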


\begin{proof}
See \cite[18.17 (5)]{Stratila}.
\end{proof}

It is plain that
the $*$-subalgebra $\lambda( L^1(G))$ is $w^*$-dense in $VN(G)$. 
Let $\mathcal{K}(G)\subset C_0(G)$ be the 
$*$-subalgebra of all continuous functions from $G$ into $\C$ with  
compact support. Then $\lambda(\mathcal{K}(G))$ is $w^*$-dense in $VN(G)$ 
(see \cite[3.12]{Folland} or \cite{Eymard}).

We recall that a function $u : G\to\C$ is called
positive definite if, for all $n\geq 1$, for all $t_1,\ldots,t_n\in G$ and for all $z_1,\ldots, z_n\in\C$, we have
$$
\sum_{i,j=1}^{n} u(t_i^{-1}t_j)\overline{z_i}z_j\,\geq 0.
$$

We recall  that 
a map $T: VN(G)\to VN(G)$ is called self-adjoint if for all $x,y\in VN(G)\cap L^1(VN(G))$, we have $\omega_G(T(x)y^*)=\omega_G(xT(y)^*)$. 

As in Section 3 on Schur multipliers, we relate the properties of a
Fourier multiplier $T_u$ to its symbol $u$.

\begin{theorem}\label{DCH}
Let $u:G\to \C$ be the symbol of a
Fourier multiplier $T_u: VN(G)\to VN(G)$. 
\begin{enumerate}
\item $T_u$ is completely positive if and only if $u$ is positive definite;
\item the map $T_u$ is unital if and only if $u(e)=1$;
\item the map $T_u$ is self-adjoint if and only if $u$ is real-valued.
\end{enumerate}
\end{theorem}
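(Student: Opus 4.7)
The plan is to handle the three equivalences separately, in order of increasing difficulty: (2), (3), (1).

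For (2), observe that $\lambda(e) = \mathrm{Id}_{L^2(G)}$ is the identity of $VN(G)$. Since $T_u(\mathrm{Id}) = T_u(\lambda(e)) = u(e)\lambda(e) = u(e)\mathrm{Id}$, unitality is immediately equivalent to $u(e) = 1$. For (3), I would test the self-adjointness condition on the $w^*$-dense subalgebra $\lambda(\mathcal{K}(G))$. For $f,h \in \mathcal{K}(G)$, combining $T_u(\lambda(f)) = \lambda(uf)$, the trace property of $\omega_G$ and Lemma \ref{prop lambda lambda}(2) yields
\begin{align*}
\omega_G(T_u(\lambda(f))\lambda(h)^*) = \int_G u(s) f(s) \overline{h(s)}\,ds, \qquad \omega_G(\lambda(f)T_u(\lambda(h))^*) = \int_G \overline{u(s)}\,f(s)\overline{h(s)}\,ds.
\end{align*}
Equality for all $f,h$ forces $u = \overline{u}$ almost everywhere, hence everywhere by continuity; the converse is obvious from the same computation.

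For (1), I will prove each direction independently. For the direction $T_u$ completely positive $\Rightarrow u$ positive definite, given $z_1,\dots,z_n \in \mathbb{C}$ and $t_1,\dots,t_n \in G$, note that $[\lambda(t_i^{-1}t_j)]_{ij} = X^*X$ in $M_n(VN(G))$, where $X = (\lambda(t_1),\dots,\lambda(t_n))$ is viewed as a row in $M_{1,n}(VN(G))$; hence this matrix is positive. Applying $\mathrm{Id}_{M_n}\otimes T_u$ gives $[u(t_i^{-1}t_j)\lambda(t_i^{-1}t_j)]_{ij} \geq 0$ in $M_n(VN(G))$. I test this against the specific vectors $\xi_i = z_i\,\lambda(t_i^{-1})\eta$ for an arbitrary unit vector $\eta \in L^2(G)$: since $\lambda(t_i^{-1}t_j)\lambda(t_j^{-1}) = \lambda(t_i^{-1})$, the cross terms collapse to
\begin{align*}
\bigl\langle u(t_i^{-1}t_j)\lambda(t_i^{-1}t_j)\xi_j,\xi_i\bigr\rangle = u(t_i^{-1}t_j)\,\overline{z_i}z_j\,\|\eta\|^2,
\end{align*}
and summing yields $\sum_{i,j} u(t_i^{-1}t_j)\overline{z_i}z_j \geq 0$, i.e.\ $u$ is positive definite.

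For the direction $u$ positive definite $\Rightarrow T_u$ completely positive, I would apply the GNS construction to $u$: there exist a Hilbert space $H$, a unitary representation $\pi:G\to B(H)$ and a vector $\xi \in H$ with $u(s) = \langle\pi(s)\xi,\xi\rangle_H$. Define $V:L^2(G)\to L^2(G,H) \simeq L^2(G)\otimes H$ by $(Vf)(t) = f(t)\,\pi(t^{-1})\xi$; one checks that $V$ is bounded with $\|V\|=\|\xi\|$, so that $V^*F(t) = \langle F(t),\pi(t^{-1})\xi\rangle_H$. A direct computation using $\pi(t)\pi(t^{-1}s) = \pi(s)$ gives
\begin{align*}
V^*(\lambda(s)\otimes \mathrm{Id}_H)V f(t) = f(s^{-1}t)\,\langle\pi(s)\xi,\xi\rangle_H = u(s)\lambda(s)f(t),
\end{align*}
so $V^*(\lambda(s)\otimes \mathrm{Id}_H)V = T_u(\lambda(s))$ for every $s$. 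Linearity extends this to the $*$-algebra generated by $\lambda(G)$, and $w^*$-continuity of both sides (using that $T_u$ is a Fourier multiplier and that $x\mapsto V^*(x\otimes\mathrm{Id}_H)V$ is normal) extends it to all of $VN(G)$. This is a Stinespring-type expression, hence $T_u$ is completely positive.

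The main obstacle is the converse direction of (1): verifying that the Stinespring-type operator $x \mapsto V^*(x\otimes \mathrm{Id}_H)V$ genuinely agrees with $T_u$ and takes values in $VN(G)$. Once the identity is established on the generators $\lambda(s)$, the rest follows from the $w^*$-density of $\lambda(\mathcal{K}(G))$ in $VN(G)$ and $w^*$-continuity, but the careful choice of the intertwining map $V$ (with $\pi(t^{-1})$ rather than $\pi(t)$) is crucial to make the computation collapse to $u(s)\lambda(s)$.
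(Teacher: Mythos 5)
Your proposal is correct. Parts (2) and (3) coincide with the paper's treatment: the paper also dismisses (2) as obvious via $T_u(\lambda(e))=u(e)\lambda(e)$, and proves (3) by exactly your computation, testing the self-adjointness identity on a dense set and reducing it via Lemma \ref{prop lambda lambda} to $\int_G \overline{u}\,\overline{f}h=\int_G \overline{f}uh$ for all $f,h$. The genuine difference is in (1): the paper does not prove this equivalence at all, but cites Proposition 4.2 of the De Canni\`ere--Haagerup reference, whereas you supply a complete argument. Your forward direction (factoring $[\lambda(t_i^{-1}t_j)]_{ij}=X^*X$, applying $\mathrm{Id}_{M_n}\otimes T_u$, and testing against $\xi_i=z_i\lambda(t_i^{-1})\eta$ so that the operator factors collapse) and your converse (GNS representation $u(s)=\langle\pi(s)\xi,\xi\rangle_H$, the intertwiner $(Vf)(t)=f(t)\pi(t^{-1})\xi$, and the Stinespring identity $V^*(\lambda(s)\otimes\mathrm{Id}_H)V=u(s)\lambda(s)$ extended by $w^*$-continuity) are both sound and are essentially the standard proof of the cited result; what this buys is a self-contained exposition at the cost of length. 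One small point worth making explicit in the converse: the continuity of $u$ (built into the paper's definition of a symbol) is what guarantees that $t\mapsto\pi(t^{-1})\xi$ is continuous, via $\Vert\pi(s)\xi-\pi(t)\xi\Vert^2=2u(e)-2\,\mathfrak{Re}\,u(t^{-1}s)$, so that $Vf$ is indeed a measurable $H$-valued function and $V$ is well defined with $\Vert V\Vert=\Vert\xi\Vert$.
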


\begin{proof}
The assertion $(1)$ is given by \cite[Proposition 4.2]{Haagerup}, 
and $(2)$ is obvious. Let us prove $(3)$.
By density, 
$T_u$ is self-adjoint if and only if 
$$
\forall  f,h\in L^1(G)\cap L^2(G),\quad 
\omega_G ( (T_u(\lambda(f))^* \lambda(h))=\omega_G ( \lambda (f)^* T_u(\lambda (h))),
$$
if and only if
$$
\forall  f,h\in L^1(G)\cap L^2(G),\quad
\omega_G ( (\lambda(uf))^* \lambda(h))=\omega_G ( \lambda (f)^* \lambda (uh)).
$$
Using lemma \ref{prop lambda lambda}, this is equivalent to 
$$
\forall  f,h\in L^1(G)\cap L^2(G),\quad
\int_G\overline{u(s)}\overline{f(s)}h(s)ds=\int_G \overline{f(s)}u(s)h(s)ds.
$$
This is clearly equivalent to $u$ being real-valued.
\end{proof}

We recall that unlike for Schur Multiplier, there 
exist positive Fourier multipliers which are not completely positive, see \cite[Corollary 4.8]{Haagerup}. Similarly, a bounded Fourier multiplier is not necessarily completely bounded \cite{bozejko}.

\subsection{Proof of theorem (\textbf{B})}
The aim of this subsection is to prove theorem (\textbf{B}). We adapt the construction of \cite{A1} to our general setting. We assume that $T_u:VN(G)\to VN(G)$ is a completely positive, self-adjoint unital Fourier multiplier associated with a bounded continuous 
function $u: G\to\C$. Let  
$\Theta :\ell^1_\R(G)\times \ell^1_\R(G)\to \R$
be the bilinear symmetric map 
defined by
\begin{align*}
\Theta:(f,h)\mapsto \sum\limits_{G\times G} u(t^{-1}s)f(t)h(s).
\end{align*} 
This is well-defined, because $u$ is bounded, and $\Theta$ is positive by
Theorem \ref{DCH}, (1).

Let $K_\Theta\subset \ell^1_\R(G)$ be the kernel of the seminorm
$\Theta(f,f)^\frac12$ and let $H_u$ be the completion of
$\ell^1_\R(G)/K_\Theta$ for the norm induced by $\Theta(f,f)^\frac12$.
We denote by $\h$ the real Hilbert space $H_u\overset{2}{\otimes}\ell_\R^2(\Z)$, to which we associate
the tracial von Neumann algebra ($\Gamma_{-1}(\h),\tau)$.
Furthermore we let $H:=\mathcal{F}_1(\h)$ 
and we recall that by construction, $\G\subset B(H)$.

In the sequel, we let $\dot{f}\in H_u$ denote the class of any $f\in 
\ell^1_\R(G)$.

\begin{lemme}
For any $t\in G$, the map $\theta_t:\dot{f}\mapsto \dot{\overbrace{s\mapsto f(t^{-1}s)}}$ is an isometry from $H_u$ onto $H_u$.
\end{lemme}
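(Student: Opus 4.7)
The plan is to show that the left-translation map $L_t : f \mapsto (s \mapsto f(t^{-1}s))$ on $\ell^1_\R(G)$ preserves the positive bilinear form $\Theta$, then descend to the quotient and extend by continuity to $H_u$; surjectivity is free from the existence of $L_{t^{-1}}$.

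The core computation is a change of variable in the defining integral of $\Theta$. For $f,h\in\ell^1_\R(G)$, using left invariance of Haar measure on $G$ (unimodular, in particular left-invariant), the substitution $r\mapsto tr$, $s\mapsto ts$ gives
\begin{align*}
\Theta(L_t f, L_t h)
&= \int_{G\times G} u(r^{-1}s)\, f(t^{-1}r)\, h(t^{-1}s)\, dr\, ds \\
&= \int_{G\times G} u\bigl((tr')^{-1}(ts')\bigr)\, f(r')\, h(s')\, dr'\, ds' \\
&= \int_{G\times G} u\bigl((r')^{-1}s'\bigr)\, f(r')\, h(s')\, dr'\, ds' \\
&= \Theta(f,h),
\end{align*}
since $(tr')^{-1}(ts')=(r')^{-1}s'$. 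In particular $L_t$ preserves the seminorm $f\mapsto \Theta(f,f)^{1/2}$, so it maps $K_\Theta$ into itself.

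Applying the same identity to $t^{-1}$ shows that $L_{t^{-1}}$ also preserves $K_\Theta$, and $L_t L_{t^{-1}} = L_{t^{-1}} L_t = \mathrm{Id}_{\ell^1_\R(G)}$. Hence $L_t$ restricts to a bijection of $K_\Theta$, and therefore induces a well-defined linear isometry on the quotient pre-Hilbert space $\ell^1_\R(G)/K_\Theta$ sending $\dot f$ to $\dot{L_t f}$. By uniform continuity and density, this extends uniquely to an isometry $\theta_t : H_u \to H_u$, and the relation $\theta_t \theta_{t^{-1}} = \theta_{t^{-1}} \theta_t = \mathrm{Id}_{H_u}$ persists on $H_u$, giving both the bijectivity of $\theta_t$ and an explicit inverse.

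The only step requiring care is the change of variables, which is unproblematic here because $G$ is unimodular (one does not even need full unimodularity, only left invariance). No significant obstacle is expected.
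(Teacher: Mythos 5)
Your proof is correct and follows essentially the same route as the paper: the invariance $\Theta(L_tf,L_th)=\Theta(f,h)$ obtained by the substitution $(r,s)\mapsto(tr,ts)$, followed by passage to the quotient by $K_\Theta$, extension to the completion $H_u$, and surjectivity via $L_{t^{-1}}$. One cosmetic remark: $\Theta$ is defined on $\ell^1_\R(G)$ as a \emph{sum} over the discrete index set $G\times G$ (i.e.\ counting measure), not an integral against Haar measure, so the change of variables is just a re-indexing by the bijection $s\mapsto ts$ and neither unimodularity nor Haar left-invariance is actually invoked.
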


\begin{proof}
This follows from the following equality. For all $f,h\in \ell_1(G)$, we have:
\begin{align*}
\sum_{G\times G} u(s'^{-1}s)f(ts)h(ts') =
\sum_{G\times G} u(s'^{-1}s)f(s)h(s') .
\end{align*}
\end{proof}

For all $t\in G$, the map $\theta_t\otimes Id_{\ell_\R^2(\Z)}$ 
extends to an onto isometry on $\h$. We denote this application by $\theta_t\overset{2}{\otimes} Id_{\ell_\R^2(\Z)}$. 
We obtain for all $t\in G$, the map $\Gamma_{-1}(\theta_t\overset{2}{\otimes} Id_{\ell_\R^2(\Z)})$ 
is a trace preserving $\ast$-automorphism.

We define a homomorphism 
\begin{align*}
\alpha : G\to Aut(\Gamma_{-1}(\mathcal H)),
\qquad \alpha(t) = 
\Gamma_{-1}(\theta_t\overset{2}{\otimes} Id_{\ell_\R^2(\Z)}).
\end{align*}

For any $t\in G$ we let $\delta_t\in \ell^1_\R(G)$ be 
defined by $\delta_t(t)=1$ and $\delta_t(s)=0$ if $s\not= t$.
According to (\ref{omega}), we have
\begin{equation}\label{commutation}
\alpha(t)\bigl(\omega(\dot{\delta_s}\otimes z)\bigr)
=\omega(\dot{\delta_{ts}}\otimes z),
\qquad t,s\in G,\ z\in l^2_\Z.
\end{equation}

We consider below 
continuity of $\alpha$ in the sense of \cite[Definition X.1.1]{Takesaki}  and 
\cite[Proposition X.1.2]{Takesaki}.

\begin{lemme} \label{continuité de alpha}
The map $\alpha:G\to Aut(\G)$ point-$w^*$-continuous, i.e for all $m\in \G$ and for all $\eta\in \G_*$, the map $t\mapsto \langle \alpha(t)(m),\eta\rangle_{\G,\G_*}$ is continuous.
\end{lemme}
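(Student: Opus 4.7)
My plan is to establish norm-continuity of the orbit maps $t\mapsto\alpha(t)(x)$ on a canonical $\star$-subalgebra $\A$ that is $w^*$-dense in $\G$, and then promote this to the point-$w^*$-continuity on all of $\G$ through the $L^2$-implementation.

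The first step is to handle the generators $\omega(\dot{\delta_s}\otimes z)$ for $s\in G$ and $z\in\ell^2_\R(\Z)$. Combining \eqref{commutation} with \eqref{eq équilité norme}, I get
\begin{align*}
\|\alpha(t)(\omega(\dot{\delta_s}\otimes z))-\alpha(t_0)(\omega(\dot{\delta_s}\otimes z))\|^2
=\|\dot{\delta_{ts}}-\dot{\delta_{t_0 s}}\|^2_{H_u}\|z\|_2^2.
\end{align*}
Expanding with the definition of $\Theta$, using $u(e)=1$ (unitality of $T_u$) and $u(g^{-1})=u(g)$ (since $u$ is real and positive definite by Theorem~\ref{DCH}), this reduces to
\begin{align*}
\|\dot{\delta_{ts}}-\dot{\delta_{t_0 s}}\|^2_{H_u}=2-2u(s^{-1}t^{-1}t_0 s),
\end{align*}
which tends to $0$ as $t\to t_0$ by continuity of $u$ at $e$. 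So $t\mapsto\alpha(t)(\omega(\dot{\delta_s}\otimes z))$ is in fact norm-continuous.

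Next I would extend norm-continuity to the unital $\star$-algebra $\A$ generated by the family $\{\omega(\dot{\delta_s}\otimes z):s\in G,z\in\ell^2_\R(\Z)\}$. Since each $\alpha(t)$ is a contractive $\star$-homomorphism, the usual product estimate
\begin{align*}
\|\alpha(t)(ab)-\alpha(t_0)(ab)\|\leq\|\alpha(t)(b)-\alpha(t_0)(b)\|\|a\|+\|\alpha(t)(a)-\alpha(t_0)(a)\|\|b\|
\end{align*}
propagates norm-continuity from the generators to all of $\A$.

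To extend from $\A$ to $\G$, I would pass to the $L^2$-implementation: each $\alpha(t)$ extends uniquely to a unitary $U(t)$ on $L^2(\G,\tau)\simeq\mathcal{F}_{-1}(\h)$ satisfying $U(t)\Omega=\Omega$ and $\alpha(t)(m)=U(t)mU(t)^*$. Since the vectors $\dot{\delta_s}\otimes z$ span a dense subspace of $\h$, the subspace $\A\Omega\subset\mathcal{F}_{-1}(\h)$ is dense. The norm-continuity on $\A$, combined with the inequality $\|x\Omega\|_2\leq\|x\|$ valid for any $x\in\G$ (finite normalized trace), gives $L^2$-continuity of $t\mapsto U(t)\xi$ for every $\xi\in\A\Omega$; the uniform bound $\|U(t)\|=1$ then upgrades this to strong continuity of $U$ on all of $L^2(\G,\tau)$. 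For any $m\in\G$ and any predual functional of the form $\eta(\cdot)=\tau(\,\cdot\, y)$ with $y\in\G\cap L^2(\G,\tau)$, the identity $\tau(\alpha(t)(m)y)=\langle mU(t)^*(y^*\Omega),U(t)^*\Omega\rangle$ yields continuity in $t$; a density argument (using that $\G\cap L^2(\G,\tau)$ is $L^1$-norm dense in $L^1(\G,\tau)$) finishes the extension. The main obstacle is precisely this last step: because $\A$ is only $w^*$-dense, not norm-dense, in $\G$, the transfer of continuity cannot be done directly and must be routed through the $L^2$-implementation together with a careful $L^1$/$L^2$ comparison on the predual side.
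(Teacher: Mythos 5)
Your argument is correct, but it takes a genuinely different route from the paper's on the key step. The paper never establishes norm-continuity of the orbit maps: it computes the matrix coefficients $\tau\bigl(\alpha(t)(\omega(\dot{\delta_{s_1}}\otimes h_1)\cdots\omega(\dot{\delta_{s_n}}\otimes h_n))\,\omega(\dot{\delta_{v_1}}\otimes k_1)\cdots\omega(\dot{\delta_{v_n}}\otimes k_n)\bigr)$ explicitly via the pair-partition formula of Lemma \ref{prop trace w w avec produit scalaire}, reduces continuity in $t$ to continuity of $u$ through $\langle\dot{\delta_{ts_i}},\dot{\delta_{v_j}}\rangle_{H_u}=u((ts_i)^{-1}v_j)$, deduces weak $L^2$-continuity on a dense subspace and hence on all of $L^2(\G)$ by uniform boundedness, and concludes by factorizing $\eta=ab$ with $a,b\in L^2(\G)$. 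You instead exploit the real-linearity of $e\mapsto\omega(e)$ together with $\|\omega(e)\|=\|e\|$ (from \eqref{eq équilité norme}) to get operator-norm continuity of $t\mapsto\alpha(t)(\omega(\dot{\delta_s}\otimes z))$, propagate it to the generated $\star$-algebra $\A$ by the Leibniz estimate, and only then descend to the $L^2$-picture. This avoids the combinatorial Wick formula entirely and yields a stronger intermediate statement (strong continuity of the implementing unitary representation), at the modest cost of having to invoke the spatial identity $\alpha(t)=U(t)\,\cdotp\,U(t)^*$, the cyclicity of $\Omega$ for $\A$, and the strong continuity of $t\mapsto U(t)^*=U(t^{-1})$ --- all standard. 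Your closing step (pairing against $\tau(\,\cdotp y)$ for $y\in\G$ and using the uniform bound $|\langle\alpha(t)(m),\eta\rangle|\leq\|m\|\,\|\eta\|_1$ to pass to the $L^1$-closure) plays exactly the role of the paper's factorization $\eta=ab$. One small remark: since $\tau$ is a finite normalized trace, $\G\cap L^2(\G,\tau)=\G$ and $\G$ is already norm-dense in $L^1(\G,\tau)$, so the ``careful $L^1$/$L^2$ comparison'' you flag as the main obstacle is in fact immediate.
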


\begin{proof}
Firstly we prove the continuity of the map
\begin{align*}
t\mapsto \langle\alpha(t)(\omega(\dot{\delta_{s_1}}\otimes h_1)\cdots\omega(\dot{\delta_{s_n}}\otimes h_n)),\omega(\dot{\delta_{v_1}}\otimes k_1)\cdots\omega(\dot{\delta_{v_n}}\otimes k_n)\rangle,
\end{align*}
for arbitrary $s_1,\ldots, s_n, v_1,\ldots, v_n\in G$ and
$h_1,\ldots,h_n,k_1,\ldots,k_n\in l^2_\Z$.

For all $t\in G$ we have, by (\ref{commutation}) and 
Lemma \ref{prop trace w w avec produit scalaire},
\begin{align*}
\langle\alpha(t)(\omega(\dot{\delta_{s_1}}\otimes h_1)&\cdots\omega(\dot{\delta_{s_n}}\otimes h_n)),
\omega(\dot{\delta_{v_1}}\otimes k_1)\cdots
\omega(\dot{\delta_{v_n}}\otimes k_n)\rangle\\
&=\langle\alpha(t)(\omega(\dot{\delta_{s_1}}\otimes h_1))\cdots\alpha(t)(\omega(\dot{\delta_{s_n}}\otimes h_n)),\omega(\dot{\delta_{v_1}}\otimes k_1)
\cdots\omega(\dot{\delta_{v_n}}\otimes k_n)\rangle\\
&=\langle(\omega(\dot{\delta_{ts_1}}\otimes h_1))\cdots(\omega(\dot{\delta_{ts_n}}\otimes h_n)),
\omega(\dot{\delta_{v_1}}\otimes k_1)\cdots\omega(\dot{\delta_{v_n}}\otimes k_n)\rangle\\
&=\tau\bigl(\omega(\dot{\delta_{ts_1}}\otimes
h_1)\cdots\omega(\dot{\delta_{ts_n}}\otimes h_n)
\omega(\dot{\delta_{v_1}}\otimes k_1)
\cdots\omega(\dot{\delta_{v_n}}\otimes k_n)\bigr)\\
&=\sum_{\nu\in \mathcal{P}_2(2n)}(-1)^{c(\nu)}\prod_{i,j\in \nu}\langle \dot{\delta_{t_i}},\dot{\delta_{t_j}}\rangle_{H_u}\langle l_i,l_j\rangle_{l^2_\Z},
\end{align*}
where $t_i=\left\lbrace\begin{array}{ccc}
ts_i&\text{if}&i\leq n\\
v_{i-n}&\text{if}& i>n
\end{array}\right. $ and $l_i=\left\lbrace\begin{array}{ccc}
h_i&\text{if}&i\leq n\\
k_{i-n}&\text{if}& i>n
\end{array}\right. $.\\

We have $\langle\dot {\delta_{ts_i}},
\dot {\delta_{v_j}}\rangle_{H_u}=u((ts_i)^{-1}v_j)$, and 
$u$ is continuous. Hence we obtain the expected continuity.

Next we note that by the density of $\text{span}\lbrace \omega(.)\omega(.)\cdots\omega(.)\rbrace$ in $L^2(\G)$, 
this implies continuity of the map 
$t\mapsto \langle \alpha(t)a,b\rangle$  for all $a,b\in L^2(\G)$. 
Finally let $m\in \G$ and let $\eta\in \G_*=L^1(\G)$.
We may write $\eta=ab$, 
with $a,b\in L^2(\G)$. Then
\begin{align*}
\langle \alpha(t)m,\eta\rangle
=\langle \alpha(t)m,ab\rangle=\langle \alpha (t) ma,b\rangle.
\end{align*}
The continuity of $t\mapsto \langle \alpha(t)m,\eta\rangle$ follows.
\end{proof}

We give some background about the crossed product 
$\G\rtimes_\alpha  G$ associated with $\alpha$.
Define, for all $m\in \G$,
\[\pi_\alpha(m):\xi\in L^2(G,H)
\mapsto \alpha(.)^{-1}(m)(\xi)(.)\in L^2(G,H). \]
Then $\pi_\alpha$  is a 1-1 $*$-homomorphism
from $\G$ into $B(L^2(G,H))$. We note in
passing that for every $m\in \G$ the map 
$s\mapsto\alpha(s^{-1})(m)$ is a element of 
the space $L^\infty_\sigma(G,B(H))$. 
By definition,
$$
\G\rtimes_\alpha  G:=VN\bigl\lbrace \lbrace \pi_\alpha(m);\,m\in \G\rbrace,\lbrace x\overline{\otimes} I_H:\,x\in VN(G)\rbrace\rbrace\subset  
B(L^2(G,H)).
$$
We define 
\begin{equation}\label{J}
J :VN(G)\longrightarrow \G\rtimes_\alpha  G,\qquad 
J(x)=x\overline{\otimes}I_H.
\end{equation}
This one-to-one 
$\ast$-homomorphism is the one to be used in 
Theorem (\textbf{B}).

The following is a classical fact.

\begin{lemme}\label{prop commutationJ et pi}
For any $x\in \G$ and $t\in G$,
\begin{align*}
J(\lambda(t))\pi_\alpha(x)=\pi_\alpha(\alpha(t)(x))J(\lambda(t)).
\end{align*}
\end{lemme}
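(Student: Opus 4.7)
The proof is a direct unwinding of definitions, carried out by evaluating each side of the claimed identity on an arbitrary $\xi \in L^2(G,H)$ at a point $s \in G$. The only substantive ingredient is the fact that $\alpha$ is a group homomorphism.

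First I would recall that under the identification $L^2(G,H) \simeq L^2(G)\overline{\otimes}H$, the operator $J(\lambda(t)) = \lambda(t)\overline{\otimes}I_H$ acts by left translation in the $G$-variable, namely $\bigl[J(\lambda(t))\xi\bigr](s) = \xi(t^{-1}s)$ for all $s\in G$ and $\xi\in L^2(G,H)$. On the other hand, by the very definition of $\pi_\alpha$, one has $\bigl[\pi_\alpha(m)\xi\bigr](s) = \alpha(s^{-1})(m)\bigl(\xi(s)\bigr)$.

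Then I would compute both sides pointwise. For the left-hand side,
\begin{align*}
\bigl[J(\lambda(t))\pi_\alpha(x)\xi\bigr](s)
&= \bigl[\pi_\alpha(x)\xi\bigr](t^{-1}s) \\
&= \alpha\bigl((t^{-1}s)^{-1}\bigr)(x)\Bigl(\xi(t^{-1}s)\Bigr)
= \alpha(s^{-1}t)(x)\Bigl(\xi(t^{-1}s)\Bigr).
\end{align*}
For the right-hand side,
\begin{align*}
\bigl[\pi_\alpha(\alpha(t)(x))J(\lambda(t))\xi\bigr](s)
&= \alpha(s^{-1})\bigl(\alpha(t)(x)\bigr)\Bigl(\bigl[J(\lambda(t))\xi\bigr](s)\Bigr) \\
&= \bigl(\alpha(s^{-1})\alpha(t)\bigr)(x)\Bigl(\xi(t^{-1}s)\Bigr)
= \alpha(s^{-1}t)(x)\Bigl(\xi(t^{-1}s)\Bigr),
\end{align*}
where the last equality uses that $\alpha : G \to \mathrm{Aut}(\G)$ is a group homomorphism. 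The two expressions coincide for every $s\in G$ and every $\xi\in L^2(G,H)$, which gives the desired operator identity.

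There is really no obstacle: the statement is the classical covariance relation built into the definition of the crossed product. The only minor pitfall is to track the conventions correctly—in particular that $\pi_\alpha$ is defined with $\alpha(s^{-1})$ (not $\alpha(s)$), so that the combination with the left regular representation of $G$ produces the shift $s^{-1}\mapsto s^{-1}t$ matching $\alpha(t)$ on the other side. Once the two pointwise formulas are written down, the identity is immediate.
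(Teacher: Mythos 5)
Your proof is correct: the pointwise computation of both sides, using the definition $[\pi_\alpha(m)\xi](s)=\alpha(s^{-1})(m)(\xi(s))$, the translation action of $J(\lambda(t))$, and the homomorphism property $\alpha(s^{-1})\alpha(t)=\alpha(s^{-1}t)$, is exactly the standard verification of the covariance relation. The paper gives no proof (it labels the lemma a classical fact), and your argument is the expected one.
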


This following will be applied a few times in this paper.

\begin{theorem}\label{lemme egalite de poids}
Let $\varphi$ and $\psi$ two n.s.f. traces on a von Neumann algebra $\mathcal{M}$. 
Assume that there exists a $w^*$-dense $*$-subalgebra $B$ of $\mathcal{M}$
such that $B\subset  L^2(\mathcal{M},\varphi)$ and for all $y\in B$,
\[\psi(y^*y)=\varphi(y^*y).\]
Then $\varphi=\psi$.
\end{theorem}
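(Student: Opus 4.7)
The strategy is to build a unitary between the GNS Hilbert spaces associated with $\varphi$ and $\psi$ out of the agreement on $B$, then deduce $\varphi=\psi$ on $\mathcal{M}_+$. First, applying the polarization identity
\[
4\,y^*z=\sum_{k=0}^{3}i^k(y+i^{-k}z)^*(y+i^{-k}z)
\]
to $y,z\in B$ (each $y+i^{-k}z$ again lies in $B$, so the hypothesis applies termwise) gives $\varphi(y^*z)=\psi(y^*z)$ for all $y,z\in B$. Writing $\mathfrak{n}_\tau=\{x\in\mathcal{M}:\tau(x^*x)<\infty\}$ for $\tau\in\{\varphi,\psi\}$, this shows $B\subset \mathfrak{n}_\varphi\cap \mathfrak{n}_\psi$ and that the GNS square norms coincide on $B$: $\|\Lambda_\varphi(y)\|^2=\varphi(y^*y)=\psi(y^*y)=\|\Lambda_\psi(y)\|^2$. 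So $U_0\colon \Lambda_\varphi(y)\mapsto \Lambda_\psi(y)$ is a well-defined isometry from $\Lambda_\varphi(B)\subset L^2(\mathcal{M},\varphi)$ into $L^2(\mathcal{M},\psi)$.

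The critical step is to show that $\Lambda_\varphi(B)$ is dense in $L^2(\mathcal{M},\varphi)$ (and analogously $\Lambda_\psi(B)$ in $L^2(\mathcal{M},\psi)$). Let $K=\overline{\Lambda_\varphi(B)}$. Since $B$ is a $*$-algebra, $K$ is invariant under $\pi_\varphi(B)$ (as $\pi_\varphi(b)\Lambda_\varphi(z)=\Lambda_\varphi(bz)$ with $bz\in B$) and, via the right action $\pi_\varphi^r(b)\colon \Lambda_\varphi(z)\mapsto \Lambda_\varphi(zb)$, also invariant under $\pi_\varphi^r(B)$. By normality of the two representations and the $w^*$-density of $B$ in $\mathcal{M}$, $K$ is reducing for $\pi_\varphi(\mathcal{M})$ and, using the identification $\pi_\varphi(\mathcal{M})'=\pi_\varphi^r(\mathcal{M})$ valid for traces, also for the commutant. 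The orthogonal projection onto $K$ is therefore central: $P_K=\pi_\varphi(p)$ for some central projection $p\in \mathcal{M}$. Then for every $b\in B$, $\Lambda_\varphi((1-p)b)=(1-P_K)\Lambda_\varphi(b)=0$, whence $(1-p)b=0$ by faithfulness of $\varphi$; the $w^*$-density of $B$ then forces $1-p=0$, i.e. $K=L^2(\mathcal{M},\varphi)$. Thus $U_0$ extends to a unitary $U\colon L^2(\mathcal{M},\varphi)\to L^2(\mathcal{M},\psi)$, and a direct check on $B$ combined with normality and $w^*$-density gives the intertwining $U\pi_\varphi(x)=\pi_\psi(x)U$ for every $x\in \mathcal{M}$.

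To conclude, I fix $y\in \mathfrak{n}_\varphi$ and, using the density just established, choose $b_n\in B$ with $\Lambda_\varphi(b_n)\to \Lambda_\varphi(y)$ in $L^2(\mathcal{M},\varphi)$. Then $\Lambda_\psi(b_n)=U\Lambda_\varphi(b_n)$ converges in $L^2(\mathcal{M},\psi)$, so $\psi(b_n^*b_n)=\varphi(b_n^*b_n)\to \varphi(y^*y)$. Choosing the $b_n$ in a Kaplansky-type way so that $b_n^*b_n\to y^*y$ weakly, lower semicontinuity of $\psi$ gives $\psi(y^*y)\leq \varphi(y^*y)$; the reverse inequality follows by symmetry from the density of $\Lambda_\psi(B)$ in $L^2(\mathcal{M},\psi)$. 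Hence $\varphi(y^*y)=\psi(y^*y)$ for every $y\in \mathcal{M}$, and writing $x=(x^{1/2})^*x^{1/2}$ for $x\in \mathcal{M}_+$ yields $\varphi=\psi$. The main obstacle is the density step in the middle paragraph: it requires simultaneously using the left and right $B$-actions on the GNS space and the tracial identification of the commutant, and any slip there would leave $U_0$ only partially defined, breaking the final approximation argument.
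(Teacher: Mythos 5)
Most of your argument is sound, and it is a genuinely different route from the paper, whose entire proof is a one-line citation of the Pedersen--Takesaki theorem (Stratila, \S 6.2): the polarization step, the proof that $\Lambda_\varphi(B)$ is dense in $L^2(\mathcal{M},\varphi)$ via the centrality of the projection onto its closure, and the construction of the intertwining unitary $U$ are all correct. The genuine gap is in the last paragraph. You need a net $(b_n)$ in $B$ satisfying two conditions simultaneously: $\varphi(b_n^*b_n)\to\varphi(y^*y)$, and $b_n^*b_n\to y^*y$ $\sigma$-weakly. The density of $\Lambda_\varphi(B)$ in $L^2(\mathcal{M},\varphi)$ delivers the first but imposes no bound on $\|b_n\|$, hence gives no control on $b_n^*b_n$ in any operator topology; the Kaplansky density theorem delivers a bounded net with $b_n\to y$ strongly, hence the second condition, but then lower semicontinuity of $\varphi$ only yields $\varphi(y^*y)\le\liminf\varphi(b_n^*b_n)$ --- the wrong direction --- and nothing forces $\varphi(b_n^*b_n)\to\varphi(y^*y)$ (a bounded strong approximation can have wildly larger $L^2(\varphi)$-norm, e.g.\ $b_n=y+\varepsilon_n q_n$ with $q_n$ projections of huge trace and $\varepsilon_n\to 0$). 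Producing approximants with both properties at once is exactly the ``core'' property of a $w^*$-dense, modular-invariant $*$-subalgebra of $\mathfrak{n}_\varphi$, i.e.\ the hard content of the Pedersen--Takesaki-type statement being proved; calling it ``a Kaplansky-type choice'' leaves the essential difficulty unaddressed.

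You can, however, finish correctly from the unitary you built. For $y\in\mathfrak{n}_\varphi$ and $z\in B$ one has $\pi_\psi^r(z)\,U\Lambda_\varphi(y)=U\Lambda_\varphi(yz)=\pi_\psi(y)\Lambda_\psi(z)$, so $U\Lambda_\varphi(y)$ is a left-bounded vector for the Hilbert algebra of $\psi$ representing the bounded operator $\pi_\psi(y)$. Since the left Hilbert algebra of a n.s.f.\ trace is achieved (full), this forces $y\in\mathfrak{n}_\psi$ with $\Lambda_\psi(y)=U\Lambda_\varphi(y)$, hence $\psi(y^*y)=\|U\Lambda_\varphi(y)\|^2=\varphi(y^*y)$, and the remainder of your symmetry argument goes through. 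Be aware, though, that the fullness statement invoked here is itself part of the classical Hilbert-algebra machinery that the paper's citation encapsulates, so the resulting proof is not more elementary than the reference --- it just makes the mechanism explicit.
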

\begin{proof}
It is a special case of the Pedersen-Takesaki theorem, see \cite[§6.2]{Stratila}.
\end{proof}

We let $\tau_M$ denote 
the dual weight on $M= \G\rtimes_\alpha  G$, 
for which we refer
to \cite[p. 61, p. 248-249]{Takesaki}.
The action $\alpha$ considered in the present paper
is trace preserving
hence the n.s.f. weight $\tau_M$ is actually 
a trace. Indeed it follows from in \cite[Theorem X.1.17]{Takesaki} 
that the 
modular group of $\tau_M$ acts trivially on $\pi(M)$ and 
on $J(VN(G))$.

We now recall a construction which is a slight variant of the 
one given in \cite[Section X]{Takesaki}. 
Let $\KG$ be the vector space of all compactly supported 
$\sigma$-strongly*-continuous  
$\G$-valued functions on $G$. 
For all $F,F'\in\KG $, we define $F*F'\in\KG$ and $F^\sharp
\in\KG$ by
\begin{align*}
F*F'(t)=\int_GF(s^{-1})\alpha^{-1}(s)(F'\left(st\right))ds
\quad\hbox{and}\quad
F^\sharp(t)=\alpha(t)(F(t^{-1})^*).
\end{align*}
Then for any $F\in\KG$, we define 
$T_F^\alpha\in \G$ by
\begin{align*}
\forall \eta\in \G_*,\text{ } \langle T^\alpha_F,\eta\rangle=
\int_G\langle\pi_\alpha(F(t))J(\lambda(t)),\eta\rangle dt.
\end{align*} 
We let $\mathcal{B}\subset \G$ denote the $*$-subalgebra of all 
$T_F^\alpha$, for $F\in \KG$.

Following \cite[Lemma X.1.8]{Takesaki}, we have the following two results.

\begin{lemme}\label{prop formule prod et sharp}
For all $F,F_1,F_2\in\KG$,
\begin{align*}
(T_F^\alpha)^*=T_{F^\sharp}^\alpha \text{ and } T_{F_1}^\alpha T_{F_2}^\alpha=T_{F_1*F_2}^\alpha
\end{align*}
\end{lemme}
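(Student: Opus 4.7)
The plan is to interpret $T_F^\alpha$ as a $w^*$-integral $\int_G \pi_\alpha(F(t)) J(\lambda(t))\,dt$ and to compute directly, exploiting three basic ingredients: (i) $J(\lambda(t))^* = J(\lambda(t^{-1}))$ together with the fact that $\pi_\alpha$ is a $*$-homomorphism; (ii) $J(\lambda(s))J(\lambda(u)) = J(\lambda(su))$; and (iii) the commutation relation of Lemma \ref{prop commutationJ et pi}, namely $J(\lambda(t))\pi_\alpha(x) = \pi_\alpha(\alpha(t)(x))J(\lambda(t))$. Unimodularity of $G$ will be crucial for the changes of variables.

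For the first identity, the plan is to take the adjoint under the integral, producing
\[
(T_F^\alpha)^* = \int_G J(\lambda(t^{-1})) \pi_\alpha(F(t)^*)\,dt,
\]
and then perform the substitution $t \mapsto t^{-1}$ (unimodularity) to obtain $\int_G J(\lambda(t)) \pi_\alpha(F(t^{-1})^*)\,dt$. Applying Lemma \ref{prop commutationJ et pi} to the integrand gives $\pi_\alpha(\alpha(t)(F(t^{-1})^*)) J(\lambda(t)) = \pi_\alpha(F^\sharp(t)) J(\lambda(t))$, so the integral equals $T_{F^\sharp}^\alpha$ by definition.

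For the product, I would write $T_{F_1}^\alpha T_{F_2}^\alpha$ as a double integral in $(s,u)$ and use Lemma \ref{prop commutationJ et pi} to push $J(\lambda(s))$ through $\pi_\alpha(F_2(u))$, turning the integrand into $\pi_\alpha(F_1(s)\alpha(s)(F_2(u))) J(\lambda(su))$. Setting $t = su$ with $s$ fixed (unimodular substitution) and applying Fubini, the double integral becomes
\[
\int_G \pi_\alpha\!\left(\int_G F_1(s)\alpha(s)(F_2(s^{-1}t))\,ds\right) J(\lambda(t))\,dt,
\]
and a final substitution $s \mapsto s^{-1}$ in the inner integral identifies the inner expression with $(F_1 * F_2)(t)$ as defined in the paper, yielding $T_{F_1 * F_2}^\alpha$.

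The main technical obstacle is the rigorous justification of the weak-integral manipulations, specifically Fubini's theorem and the interchange of $\pi_\alpha$ with an integral. Since $F_1, F_2$ are compactly supported and $\sigma$-strong*-continuous, and $\pi_\alpha$ is a normal $*$-homomorphism, the integrals converge in the strong* topology and standard approximation arguments (Riemann sums, or dominated convergence after pairing with a fixed $\eta \in M_*$) justify the exchanges. One also has to verify that $F^\sharp$ and $F_1 * F_2$ genuinely lie in $\KG$: compact support is immediate (contained in $(\operatorname{supp} F)^{-1}$, resp.\ in $(\operatorname{supp} F_1)(\operatorname{supp} F_2)$), and $\sigma$-strong*-continuity in $t$ follows from the point-$w^*$-continuity of $\alpha$ established in Lemma \ref{continuité de alpha}, combined with the continuity of the $F_i$.
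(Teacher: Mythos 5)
Your proof is correct: the commutation relation of Lemma \ref{prop commutationJ et pi}, unimodularity of $G$ for the two inversion substitutions, and the weak-integral/Fubini justifications are exactly what is needed, and your final inner integral $\int_G F_1(s)\alpha(s)(F_2(s^{-1}t))\,ds$ does coincide with the paper's $(F_1*F_2)(t)=\int_G F_1(s^{-1})\alpha^{-1}(s)(F_2(st))\,ds$ after the change of variable $s\mapsto s^{-1}$. The paper itself writes no argument, deferring to Lemma X.1.8 of \cite{Takesaki}; your computation is precisely the standard verification behind that citation.
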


\begin{lemme}\label{prop B dense}
\begin{enumerate}
\item[]
\item The $*$-subalgebra $\mathcal{B}$ is $w^*$- dense in $\prodalpha$;
\item For all $F\in \KG$, we have $T^\alpha_{F^\sharp*F}\geq 0$. 
\item Let $e$ be the unit of $G$. If $F\in \KG$ is
such that $T_F^\alpha\geq 0$, then:
\begin{align*}
\tau_M(T_F^\alpha)=\tau(F(e)).
\end{align*}
\end{enumerate}
\end{lemme}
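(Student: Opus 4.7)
The plan is to follow the classical development of the crossed-product construction, as in Takesaki X.1.8, adapting the arguments to our specific setting. Part (2) is immediate from Lemma \ref{prop formule prod et sharp}: we have
\[ T^\alpha_{F^\sharp * F} = T^\alpha_{F^\sharp}\, T^\alpha_F = (T^\alpha_F)^* T^\alpha_F \geq 0. \]
The substantive content therefore lies in parts (1) and (3).

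For part (1), I would reduce the problem to showing that each elementary element $\pi_\alpha(m) J(\lambda(t))$, for $m \in \G$ and $t \in G$, lies in the $w^*$-closure of $\mathcal{B}$. Once this is done, the conclusion follows since, by Lemma \ref{prop commutationJ et pi}, any finite product of generators from $\pi_\alpha(\G) \cup J(\lambda(G))$ reduces to the form $\pi_\alpha(m') J(\lambda(t'))$, and the linear span of such elements is $w^*$-dense in $\prodalpha$ by the very definition of the crossed product. For the approximation, I would choose a net $(g_V)$ of nonnegative continuous compactly supported bump functions on $G$ indexed by neighborhoods $V$ of $t$, with $\int_G g_V = 1$ and $\mathrm{supp}(g_V) \subseteq V$, and set $F_V(s) = g_V(s)\, m \in \KG$. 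By Lemma \ref{continuité de alpha} together with the SOT-continuity of $s \mapsto \lambda(s)$, the bounded map $s \mapsto \pi_\alpha(m) J(\lambda(s))$ is $w^*$-continuous, so testing against an arbitrary $\eta \in M_*$ yields
\[ \langle T^\alpha_{F_V},\eta\rangle = \int_G g_V(s)\langle \pi_\alpha(m) J(\lambda(s)),\eta\rangle\,ds \;\longrightarrow\; \langle\pi_\alpha(m) J(\lambda(t)),\eta\rangle \]
as $V$ shrinks to $\{t\}$.

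For part (3), the key step is to establish the Plancherel-type identity $\tau_M\bigl((T^\alpha_{F_1})^* T^\alpha_{F_2}\bigr) = \int_G \tau\bigl(F_1(s)^* F_2(s)\bigr)\,ds$ for $F_1, F_2 \in \KG$, which is essentially the defining property of the dual weight (Takesaki X.1.8). Specializing to $F_1 = F_2 = G$ and using Lemma \ref{prop formule prod et sharp}, this gives $\tau_M(T^\alpha_{G^\sharp * G}) = \int_G \tau(G(s)^* G(s))\,ds$. A direct computation from the formulas for $\sharp$ and $*$ yields $(G^\sharp * G)(e) = \int_G \alpha^{-1}(s)(G(s)^* G(s))\,ds$, and since $\alpha$ is $\tau$-preserving, applying $\tau$ gives $\tau((G^\sharp * G)(e)) = \int_G \tau(G(s)^* G(s))\,ds$. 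Combining, $\tau_M(T^\alpha_F) = \tau(F(e))$ on positive elements of the form $T^\alpha_{G^\sharp * G}$. To upgrade this to arbitrary $F \in \KG$ with $T^\alpha_F \geq 0$, I would invoke Theorem \ref{lemme egalite de poids}: define an auxiliary n.s.f. trace $\psi$ on $\prodalpha$ via a GNS construction from the Plancherel form, and use the $w^*$-density of $\mathcal{B}$ proved in part (1), together with the inclusion $\mathcal{B} \subset \prodalpha \cap L^1(\prodalpha,\psi)$, to conclude that $\psi = \tau_M$. The main obstacle is setting up this GNS-type extension cleanly—verifying closability of the form and matching $\psi$ with $\tau_M$ on enough positive elements—but once this is accomplished the announced identity holds for every $F \in \KG$ with $T^\alpha_F \geq 0$.
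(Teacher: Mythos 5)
First, a point of reference: the paper offers no proof of this lemma at all -- it is quoted from Lemma X.1.8 of Takesaki's book -- so your proposal has to be measured against that standard development rather than against an argument in the text. Your part (2) is correct and is exactly the intended one-line consequence of Lemma \ref{prop formule prod et sharp}. Your part (1) is also correct: the bump-function net $F_V(s)=g_V(s)\,m$ gives elements $T^\alpha_{F_V}$ of norm at most $\Vert m\Vert$ converging $w^*$ to $\pi_\alpha(m)J(\lambda(t))$, and the span of such products is a non-degenerate $*$-subalgebra of $B(L^2(G,H))$ whose $w^*$-closure is therefore the von Neumann algebra it generates, namely $\prodalpha$.

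The gap is in part (3). What your argument actually establishes is $\tau_M\bigl(T^\alpha_{G^\sharp *G}\bigr)=\tau\bigl((G^\sharp*G)(e)\bigr)$, i.e.\ the statement for positive elements of the special form $F=G^\sharp*G$. The lemma asserts it for \emph{every} $F\in\KG$ with $T^\alpha_F\geq 0$, and such an $F$ need not factor as $G^\sharp*G$ inside $\KG$ (a positive element of a $*$-algebra need not admit a square root in that algebra). Your proposed repair via Theorem \ref{lemme egalite de poids} does not close this: Pedersen--Takesaki only identifies two n.s.f.\ traces once both are in hand and agree on elements $y^*y$ with $y\in\mathcal{B}$; it would give $\psi=\tau_M$, but you would still need to know that the auxiliary trace $\psi$ satisfies $\psi(T^\alpha_F)=\tau(F(e))$ for \emph{arbitrary} positive $T^\alpha_F$, which is precisely the statement being proved -- the argument is circular. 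The missing content is the left Hilbert algebra machinery behind the dual weight, which is exactly what the cited Lemma X.1.8 supplies. That said, it is worth observing that every application of part (3) in this paper (Lemmas \ref{TrPres} and \ref{special}, and Theorem \ref{th application sunder}) is to an element of the form $F^\sharp*F$, so the special case you do establish would in fact suffice for the paper's purposes.
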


We now prove a property
necessary to apply Definition \ref{rmq superdilatation}.

\begin{lemme}\label{TrPres}
The map $J$ defined by (\ref{J}) is trace preserving, that is,
\begin{align*}
\tau_M\circ J=\omega_G.
\end{align*}
\end{lemme}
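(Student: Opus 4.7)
The plan is to apply Theorem \ref{lemme egalite de poids} to the two weights $\omega_G$ and $\tau_M \circ J$ on $VN(G)$, after verifying their agreement on a $w^*$-dense $*$-subalgebra of trace-class elements. The natural way to carry out the matching is to realize the elements $J(\lambda(f))$ inside the algebra $\mathcal{B}$ of $T_F^\alpha$'s, so that Lemmas \ref{prop formule prod et sharp} and \ref{prop B dense} give immediate access to the value of $\tau_M$.

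Concretely, for $f \in \mathcal{K}(G)$ I would define $F \in \KG$ by $F(t) = f(t)\, 1_\G$. Testing against $\eta \in (\prodalpha)_*$ and using that $\pi_\alpha(1_\G) = \mathrm{Id}$, one verifies directly that
\[
J(\lambda(f)) \,=\, T_F^\alpha.
\]
Lemma \ref{prop formule prod et sharp} then gives $J(\lambda(f))^*J(\lambda(f)) = T_{F^\sharp * F}^\alpha$, which is positive by Lemma \ref{prop B dense}(2), so by Lemma \ref{prop B dense}(3),
\[
\tau_M\bigl(J(\lambda(f))^*J(\lambda(f))\bigr) = \tau\bigl((F^\sharp * F)(e)\bigr).
\]
Because every $\alpha(t)$ fixes $1_\G$, a short computation yields $F^\sharp(t) = \overline{f(t^{-1})}\,1_\G$, so
\[
(F^\sharp * F)(e) \,=\, \int_G F^\sharp(s^{-1})\,\alpha^{-1}(s)\bigl(F(s)\bigr)\, ds
\,=\, \int_G \overline{f(s)} f(s)\, 1_\G\, ds \,=\, \|f\|_2^2 \cdot 1_\G.
\]
Since $\tau(1_\G)=1$, this gives $\tau_M(J(\lambda(f))^*J(\lambda(f))) = \|f\|_2^2$, which matches $\omega_G(\lambda(f)^*\lambda(f)) = (f^\vee * f)(e) = \|f\|_2^2$ provided by Lemma \ref{prop lambda lambda}.

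To invoke Theorem \ref{lemme egalite de poids}, I would take $B$ to be the linear span of $\{\lambda(f_1 * f_2) : f_1, f_2 \in \mathcal{K}(G)\}$. This is a $w^*$-dense $*$-subalgebra of $VN(G)$ (since $\lambda(\mathcal{K}(G))$ is $w^*$-dense and any $\lambda(f)$ is approximable by $\lambda(f * e_n)$ for an approximate unit $e_n$), and every element $\lambda(f_1 * f_2) = \lambda(f_1)\lambda(f_2)$ is a product of two $L^2$-elements, hence lies in $L^1(VN(G),\omega_G)$. The calculation above, applied with $f$ replaced by any $g \in \mathcal{K}(G) * \mathcal{K}(G)$, shows that $\omega_G$ and $\tau_M \circ J$ agree on $y^*y$ for every $y \in B$.

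The main obstacle I anticipate is bureaucratic rather than conceptual: one must check that $\tau_M \circ J$ is an n.s.f.\ trace on $VN(G)$. The trace property and normality are inherited from those of $\tau_M$ (already noted in the excerpt) together with the fact that $J$ is a normal $*$-homomorphism; faithfulness follows from the injectivity of $J$ combined with faithfulness of $\tau_M$; and semifiniteness is forced by the agreement with $\omega_G$ on the $w^*$-dense $*$-ideal generated by $B$. Once these points are settled, Theorem \ref{lemme egalite de poids} immediately yields $\tau_M \circ J = \omega_G$, which is the desired conclusion.
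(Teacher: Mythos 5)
Your proof is correct and follows essentially the same route as the paper: both realize $J(\lambda(f)^*\lambda(f))$ for $f\in\mathcal{K}(G)$ as an element $T^\alpha_{F^\sharp *F}$ of $\mathcal{B}$ (the paper writes it directly as $T^\alpha_{(f^\vee *f)\otimes 1}$), evaluate $\tau_M$ on it via Lemma \ref{prop B dense}, match with $\omega_G(\lambda(f)^*\lambda(f))=(f^\vee *f)(e)$, and conclude by the Pedersen--Takesaki uniqueness theorem (Theorem \ref{lemme egalite de poids}). Your extra care in choosing the dense subalgebra $B$ inside $L^1(VN(G),\omega_G)$ and in checking that $\tau_M\circ J$ is n.s.f.\ is a welcome refinement of details the paper leaves implicit, but it does not change the argument.
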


\begin{proof}
Firstly we remark that for all $f\in \mathcal{K}(G)$, $T^\alpha_{f^**f\otimes 1}=J(\lambda(f^**f))$. Using Lemma \ref{prop lambda lambda}, it follows that
\begin{align*}
\tau_M(J(\lambda(f)^*\lambda(f)))=\tau_M(J(\lambda(f^**f)))=\tau_M(T^\alpha_{f^**f\otimes 1})).
\end{align*}
According to Lemma \ref{prop B dense}, we have
\begin{align*}
\tau_M(T^\alpha_{f^**f\otimes 1})=\tau(f^**f\otimes 1(e))=\tau(1)f^**f(e)=\omega_G(\lambda(f)^*\lambda(f)).
\end{align*}
Consequently,
\begin{align*}
\tau_M(J(\lambda(f)^*\lambda(f)))=\omega_G(\lambda(f)^*\lambda(f)).
\end{align*}
The result follows by applying Theorem \ref{lemme egalite de poids} 
to $\omega_G$ and the n.s.f. trace $\tau_M\circ J$.
\end{proof}

We now establish a link between the trace $\tau_M$
on $M=\prodalpha$, the Plancherel trace $\omega_G$
and the trace $\tau$ on $\G$.

\begin{prop}\label{prop egalite tauM=omegatau}
For any $x\in \G$ and $m\in VN(G)\cap L^1(VN(G))$,
$\pi_\alpha(x)J(m)$ belongs to $M\cap L^1(M)$ and we have
\begin{align*}
\tau_M(\pi_\alpha(x)J(m))=\omega_G(m)\tau(x).
\end{align*}
\end{prop}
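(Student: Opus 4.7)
The strategy is to first establish a polarized version of Lemma~\ref{prop B dense}(3), and then specialize to $\G$-valued functions of the form $F(t)=xf(t)$ with $x\in\G$ and $f\in\mathcal{K}(G)$, so as to recover $\pi_\alpha(x)J(\lambda(f))=T^\alpha_F$. A density argument on $m$ will then finish the proof.

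First, for any $F\in\KG$ I compute directly from the definitions of $F^\sharp$ and of the convolution product that
\[
(F^\sharp*F)(e)=\int_G \alpha^{-1}(s)\bigl(F(s)^*F(s)\bigr)\,ds.
\]
Applying $\tau$ and using that $\alpha$ is trace preserving yields $\tau((F^\sharp*F)(e))=\int_G\tau(F(s)^*F(s))\,ds$. Combined with Lemmas~\ref{prop formule prod et sharp} and~\ref{prop B dense}(3), this gives the identity $\tau_M((T^\alpha_F)^*T^\alpha_F)=\int_G\tau(F(s)^*F(s))\,ds$ for every $F\in\KG$. I then polarize via
\[
4(T^\alpha_{F_1})^*T^\alpha_{F_2}=\sum_{k=0}^3 i^k(T^\alpha_{F_2+i^kF_1})^*(T^\alpha_{F_2+i^kF_1}),
\]
and since each of the four positive summands is in $L^1(M)$ by what we just proved, I obtain $(T^\alpha_{F_1})^*T^\alpha_{F_2}\in L^1(M)$ together with the sesquilinear extension $\tau_M((T^\alpha_{F_1})^*T^\alpha_{F_2})=\int_G\tau(F_1(s)^*F_2(s))\,ds$ for all $F_1,F_2\in\KG$.

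Next I specialize to $F_i(t)=x_if_i(t)$ with $x_i\in\G$ and $f_i\in\mathcal{K}(G)$, for which a direct verification gives $T^\alpha_{F_i}=\pi_\alpha(x_i)J(\lambda(f_i))$. The polarized identity then reads
\[
\tau_M\bigl(J(\lambda(f_1))^*\pi_\alpha(x_1^*x_2)J(\lambda(f_2))\bigr)=\tau(x_1^*x_2)\int_G \overline{f_1(s)}f_2(s)\,ds.
\]
Using that $\tau_M$ is a trace and $J$ is a $\star$-homomorphism, I cyclically permute the left-hand side to $\tau_M(\pi_\alpha(x_1^*x_2)J(\lambda(f_2)\lambda(f_1)^*))$, while Lemma~\ref{prop lambda lambda}(2) together with the trace property of $\omega_G$ identifies the right-hand side with $\tau(x_1^*x_2)\,\omega_G(\lambda(f_2)\lambda(f_1)^*)$. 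Taking $x_1=1_\G$ and $x_2=x$ establishes the proposition for arbitrary $x\in\G$ and any $m$ of the form $\lambda(f_2)\lambda(f_1)^*$ with $f_1,f_2\in\mathcal{K}(G)$.

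Finally, I extend to general $m\in VN(G)\cap L^1(VN(G))$ by density. The set $\{\lambda(f_2)\lambda(f_1)^*:f_1,f_2\in\mathcal{K}(G)\}$ is dense in $L^1(VN(G))$, since $L^2(VN(G))\cdot L^2(VN(G))=L^1(VN(G))$ and $\lambda(\mathcal{K}(G))$ is dense in $L^2(VN(G))$. Both sides of the identity are $L^1$-continuous in $m$: the left-hand side because $J$ is isometric from $L^1(VN(G))$ into $L^1(M)$ by Lemma~\ref{TrPres} and $\pi_\alpha(x)$ acts boundedly on $L^1(M)$, and the right-hand side by duality of $\omega_G$. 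The main technical obstacle is the $L^1(M)$-membership step during polarization: a priori $(T^\alpha_{F_1})^*T^\alpha_{F_2}$ is only an element of $M$, and one must justify via the four positive summands that it lies in $L^1(M)$ and that $\tau_M$ distributes linearly across the decomposition.
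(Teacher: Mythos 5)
Your argument is correct, and it reaches the conclusion by a genuinely different route in the extension step. The computational core is the same as the paper's: the identity $\tau_M\bigl((T^\alpha_F)^*T^\alpha_F\bigr)=\int_G\tau(F(s)^*F(s))\,ds$ obtained from Lemmas \ref{prop formule prod et sharp} and \ref{prop B dense} together with the trace-preserving property of $\alpha$ (the paper carries out exactly this computation, but only for $F=f\otimes x$, in Lemma \ref{special}). After that the two proofs diverge. The paper keeps only the doubly diagonal case $\tau_M(\pi_\alpha(x^*x)J(\lambda(f)^*\lambda(f)))=\tau(x^*x)\,\omega_G(\lambda(f)^*\lambda(f))$, then for fixed $x$ views $m\mapsto\tau(x^*x)^{-1}\tau_M(\pi_\alpha(x^*x)J(m))$ as a n.s.f.\ trace on $VN(G)$ and invokes the Pedersen--Takesaki uniqueness theorem (Theorem \ref{lemme egalite de poids}) to identify it with $\omega_G$ on all of $VN(G)_+$, finishing by polarization in $x$ alone. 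You instead polarize in $F$ over all of $\KG$ (which polarizes in $x$ and $f$ simultaneously), specialize to $F_i=x_if_i$, and then extend in $m$ by $L^1$-norm density of products $\lambda(f_2)\lambda(f_1)^*$. What each buys: the paper's route yields the equality of the two objects as weights on the whole positive cone $VN(G)_+$ and sidesteps any $L^1$-continuity discussion, at the cost of having to know (as the paper asserts) that $\Phi_x$ is genuinely a n.s.f.\ trace so that Theorem \ref{lemme egalite de poids} applies; your route is more elementary and self-contained, needing only the polarization identity, the $L^1$-membership bookkeeping for the four positive summands (which you correctly flag and justify), Lemma \ref{TrPres} for the $L^1$-continuity of the left-hand side, and the factorization $L^2\cdot L^2=L^1$ for density. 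Both the cyclic permutation under $\tau_M$ and the one under $\omega_G$ that you use are legitimate since the relevant factors lie in the corresponding $L^2$-spaces. I see no gap.
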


We first establish 
the above result in a special case.

\begin{lemme}\label{special}
Let $x\in \G$ and $f\in \mathcal{K}(G)$. We have:
\begin{align*}
\tau_M(\pi_\alpha(x^*x)J(\lambda(f)^*\lambda(f))=\tau(x^*x)\omega_G(\lambda(f)^*\lambda(f)).
\end{align*}
\end{lemme}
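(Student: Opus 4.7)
My plan is to realize the operator $J(\lambda(f))\pi_\alpha(x^*)$ as an element $T^\alpha_G$ of the $*$-subalgebra $\mathcal{B}$, compute $\tau_M((T^\alpha_G)^*T^\alpha_G)$ via Lemma \ref{prop B dense}, and then recover $\pi_\alpha(x^*x)J(\lambda(f)^*\lambda(f))$ inside the trace by cyclicity of $\tau_M$.

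First, I would apply the commutation formula of Lemma \ref{prop commutationJ et pi} inside the decomposition $J(\lambda(f)) = \int_G f(t)J(\lambda(t))\,dt$ to obtain
$$
J(\lambda(f))\pi_\alpha(x^*) \,=\, \int_G f(t)\,\pi_\alpha(\alpha(t)(x^*))J(\lambda(t))\,dt \,=\, T^\alpha_G,
$$
where $G\in\KG$ is given by $G(t) = f(t)\alpha(t)(x^*)$. By Lemma \ref{prop formule prod et sharp} and Lemma \ref{prop B dense}, $(T^\alpha_G)^*T^\alpha_G = T^\alpha_{G^\sharp * G}$ is positive, and
$$
\tau_M\bigl((T^\alpha_G)^*T^\alpha_G\bigr) \,=\, \tau\bigl((G^\sharp * G)(e)\bigr).
$$

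Next, a direct calculation using $\alpha(t)\alpha(t^{-1}) = \mathrm{Id}_\G$ gives $G^\sharp(t) = \overline{f(t^{-1})}\,x = f^\vee(t)\,x$, hence
$$
(G^\sharp * G)(e) \,=\, \int_G G^\sharp(s^{-1})\,\alpha^{-1}(s)(G(s))\,ds \,=\, \int_G \overline{f(s)}\,x\,f(s)\,x^*\,ds \,=\, \|f\|_2^2\,xx^*.
$$
Since $\tau$ is a trace on $\G$, this gives $\tau((G^\sharp*G)(e)) = \|f\|_2^2\,\tau(x^*x)$; and by Lemma \ref{prop lambda lambda}, $\|f\|_2^2 = (f^\vee*f)(e) = \omega_G(\lambda(f)^*\lambda(f))$. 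Using that $\tau_M$ is a trace, I would then rewrite
$$
\tau_M\bigl((T^\alpha_G)^*T^\alpha_G\bigr) \,=\, \tau_M\bigl(\pi_\alpha(x)J(\lambda(f)^*\lambda(f))\pi_\alpha(x^*)\bigr) \,=\, \tau_M\bigl(\pi_\alpha(x^*x)J(\lambda(f)^*\lambda(f))\bigr),
$$
and combining these equalities yields the claim.

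The delicate point I expect is justifying that $G(t)=f(t)\alpha(t)(x^*)$ actually lies in $\KG$, i.e., that $t\mapsto\alpha(t)(x^*)$ is $\sigma$-strongly-$*$-continuous; Lemma \ref{continuité de alpha} only asserts point-$w^*$-continuity, but for a trace-preserving $\star$-automorphism action on a finite von Neumann algebra these two notions of continuity coincide, so this is essentially routine. A secondary bookkeeping point is the use of the cyclicity of $\tau_M$, which requires knowing a priori that the relevant products lie in $L^1(M)$; this is automatic once they are expressed as $(T^\alpha_G)^*T^\alpha_G$ since the right-hand side is a positive element whose $\tau_M$-value we have just computed to be finite.
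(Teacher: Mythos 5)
Your proof is correct and uses the same machinery as the paper: realize the relevant product as some $T^\alpha_F$ with $F\in\KG$, apply Lemma \ref{prop formule prod et sharp} and Lemma \ref{prop B dense} to reduce the trace to $\tau(F^\sharp * F(e))$, and finish with the traciality of $\tau_M$ and Lemma \ref{prop lambda lambda}. The only real difference is the choice of kernel: the paper works with $\pi_\alpha(x)J(\lambda(f)) = T^\alpha_{f\otimes x}$, whose kernel $t\mapsto f(t)x$ is trivially in $\KG$, computes $\tau_M(T^\alpha_{f\otimes x}(T^\alpha_{f\otimes x})^*) = (f*f^\vee)(e)\,\tau(xx^*)$, and then substitutes $f^\vee$ for $f$ to convert $\lambda(f)\lambda(f)^*$ into $\lambda(f)^*\lambda(f)$; you instead take $J(\lambda(f))\pi_\alpha(x^*) = T^\alpha_{G}$ with $G(t)=f(t)\alpha(t)(x^*)$, which yields $\lambda(f)^*\lambda(f)$ directly but forces you to check that $t\mapsto\alpha(t)(x^*)$ is $\sigma$-strongly-$*$ continuous. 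Your justification of that point is sound — $\G$ is finite, $\alpha$ is trace preserving, and on bounded sets $\Vert\cdot\Vert_2$-convergence (which follows from the point-$w^*$-continuity of Lemma \ref{continuité de alpha}) gives $\sigma$-strong-$*$ convergence; this equivalence is also exactly what the cited Proposition 1.2 of \cite{Takesaki} provides — but the paper's choice of $f\otimes x$ sidesteps the issue entirely, at the harmless cost of the final $f\mapsto f^\vee$ substitution. Both routes are equally rigorous; yours is marginally more direct in the conclusion, the paper's marginally lighter in the hypotheses it must verify.
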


\begin{proof}
We have by the tracial property of $\tau_M$:
\begin{align*}
\tau_M(\pi_\alpha(x^*x)J(\lambda(f)\lambda(f)^*))
&=\tau_M(\pi_\alpha(x)^*\pi_\alpha(x)J(\lambda(f))J(\lambda(f))^*)\\
&\begin{array}{ccc}
=\tau_M(&\underbrace{\pi_\alpha(x)J(\lambda(f))(\pi_\alpha(x)J(\lambda(f)))^*}&)\text{ }\\
&\geq 0.
\end{array}
\end{align*}
We observe that $T^\alpha_{f\otimes x}= \pi_\alpha(x)J(\lambda(f))$ and $(f\otimes x)*(f\otimes x)^\sharp=f*f^*(.) x\alpha(.)(x^*)$. Then we have, 
using Lemmas \ref{prop lambda lambda}, \ref{prop formule prod et sharp} and
\ref{prop B dense},
\begin{align*}
\tau_M(\pi_\alpha(x^*x)J(\lambda(f)\lambda(f)^*))&=\tau_M(T^\alpha_{f\otimes x}(T^\alpha_{f\otimes x})^*)\\
&=\tau_M(T^\alpha_{(f\otimes x)*(f\otimes x)^\sharp})\\
&=\tau_M(T^\alpha_{f*f^*(.) x\alpha(.)(x^*)})\\
&=\tau(f*f^*(e)\otimes x\alpha(e)(x^*))\\
&=f*f^*(e)\tau(xx^*)\\
&=\omega_G(\lambda(f)\lambda(f)^*)\tau(x^*x).
\end{align*}
We obtain the result by changing $f$ into $f^*$.
\end{proof}


\begin{proof}[Proof of Proposition \ref{prop egalite tauM=omegatau}]
For any $m\in VN(G)\cap L^1(VN(G))$, $J(m)\in M\cap L^1(M)$, by Lemma 
\ref{TrPres}. Hence
$\pi_\alpha(x)J(m)$ belongs to $M\cap L^1(M)$ for 
any $x\in \G$. 

We fix $x\in \G$ and we may assume that $x\neq 0$. 
We define a n.s.f. trace $\Phi_x$ on $VN(G)$ by
\begin{align*}
\Phi_x(m) =\tau_M\left(\dfrac{\pi_\alpha(x^*x)}{\tau(x^*x)}J(m)\right),
\qquad m\in VN(G)_+.
\end{align*}
By Lemma \ref{special}, $\Phi_x$ coincides with
$\omega_G$ on 
$\lbrace \lambda(f)^*\lambda(f):$ $f\in \mathcal{K}(G)\rbrace$.
Appying Theorem \ref{lemme egalite de poids}, we deduce that 
$\Phi_x=\omega_G$. Thus we have
\begin{align*}
\tau_M(\pi_\alpha(x^*x)J(m))=\tau(x^*x)\omega_G(m),
\qquad x\in \G,\ m\in VN(G)_+.
\end{align*}
By polarization, the result follows at once.
\end{proof}

As in Section 4 on Schur multipliers, $J$ induces $J_1:L^1(VN(G))\to L^1(\prodalpha)$ and we denote by $\mathbb{E}=J_1^*:\prodalpha\to VN(G)$ the 
conditional expectation associated with $J$.

\begin{lemme}\label{prop esperance simplification}
For any $x\in \G$ and $t\in G$, we have:
\begin{align*}
\mathbb{E}(\pi_\alpha(x)J(\lambda(t)))=\tau(x)\lambda(t).
\end{align*}
\end{lemme}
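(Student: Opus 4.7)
The plan is to reduce this identity to Proposition \ref{prop egalite tauM=omegatau} by testing against sufficiently many elements via the duality that defines $\mathbb{E}$.

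First I would record the defining relation of $\mathbb{E}$. Since $\mathbb{E}=J_1^*:M\to VN(G)$ and $J_1$ sends $m\in VN(G)\cap L^1(VN(G))$ to $J(m)\in L^1(M)$, the trace pairings $\omega_G(\,\cdot\,\cdot)$ and $\tau_M(\,\cdot\,\cdot)$ give, for every $n\in M$ and every $m\in VN(G)\cap L^1(VN(G))$,
\begin{align*}
\omega_G\bigl(\mathbb{E}(n)\,m\bigr)\,=\,\tau_M\bigl(n\,J(m)\bigr).
\end{align*}
Applying this to $n=\pi_\alpha(x)J(\lambda(t))$ and using that $J$ is a $*$-homomorphism, one obtains
\begin{align*}
\omega_G\bigl(\mathbb{E}(\pi_\alpha(x)J(\lambda(t)))\,m\bigr)
\,=\,\tau_M\bigl(\pi_\alpha(x)J(\lambda(t))J(m)\bigr)
\,=\,\tau_M\bigl(\pi_\alpha(x)J(\lambda(t)m)\bigr).
\end{align*}

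Next I would observe that $\lambda(t)m\in VN(G)\cap L^1(VN(G))$, since $\lambda(t)$ is a unitary in $VN(G)$ and $m\in VN(G)\cap L^1(VN(G))$. Consequently Proposition \ref{prop egalite tauM=omegatau} applies to the pair $(x,\lambda(t)m)$ and yields
\begin{align*}
\tau_M\bigl(\pi_\alpha(x)J(\lambda(t)m)\bigr)\,=\,\tau(x)\,\omega_G\bigl(\lambda(t)m\bigr)\,=\,\omega_G\bigl(\tau(x)\lambda(t)\,m\bigr).
\end{align*}
Chaining the two displays,
\begin{align*}
\omega_G\bigl(\mathbb{E}(\pi_\alpha(x)J(\lambda(t)))\,m\bigr)\,=\,\omega_G\bigl(\tau(x)\lambda(t)\,m\bigr),
\qquad m\in VN(G)\cap L^1(VN(G)).
\end{align*}

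Finally, since $VN(G)\cap L^1(VN(G))$ is $w^*$-dense in $L^1(VN(G))$, this family of test elements separates points of $VN(G)$ through the duality $VN(G)\simeq L^1(VN(G))^*$; hence the two elements $\mathbb{E}(\pi_\alpha(x)J(\lambda(t)))$ and $\tau(x)\lambda(t)$ of $VN(G)$ coincide. No step is truly difficult here: the only point that requires a moment of care is keeping the trace pairings straight and verifying that $\pi_\alpha(x)J(\lambda(t))J(m)=\pi_\alpha(x)J(\lambda(t)m)$ lies in $M\cap L^1(M)$ so that Proposition \ref{prop egalite tauM=omegatau} genuinely applies—and this is immediate from Lemma \ref{TrPres} together with the boundedness of $\pi_\alpha(x)$ and $\lambda(t)$.
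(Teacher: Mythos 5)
Your proposal is correct and follows essentially the same route as the paper: test $\mathbb{E}(\pi_\alpha(x)J(\lambda(t)))$ against $m\in VN(G)\cap L^1(VN(G))$ via the duality defining $\mathbb{E}=J_1^*$, use $J(\lambda(t))J(m)=J(\lambda(t)m)$ to invoke Proposition \ref{prop egalite tauM=omegatau}, and conclude by density. No issues.
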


\begin{proof}
Let $\eta\in VN(G)\cap L^1(VN(G))$. Using 
Proposition \ref{prop egalite tauM=omegatau}, we have:
\begin{align*}
\langle \mathbb{E}(\pi_\alpha(x)J(\lambda(t))),
\eta\rangle&=\langle\pi_\alpha(x)J(\lambda(t)),J_1(\eta)\rangle\\
&=\langle \pi_\alpha(x)\lambda(t)\overline{\otimes}1,\eta\overline{\otimes} 1\rangle\\
&=\tau_M(\pi_\alpha(x)(\lambda(t)\eta\overline{\otimes}1))\\
&=\tau_M(\pi_\alpha(x)J(\lambda(t)\eta))\\
&=\omega_G(\lambda(t)\eta)\tau(x).
\end{align*} 
Since $VN(G)\cap L^1(VN(G))$ is dense in $L^1(VN(G))$, this yields the result.
\end{proof}

Let $\rho\in B(l^2_\Z)$ be the shift operator,
$\rho\bigl((a_n)_n\bigr)=\bigl((a_{n+1})_n\bigr).$
This is a unitary hence $\rho':=\Gamma_{-1}(Id\overset{2}{\otimes}\rho)$
is a normal completely positive,  trace preserving automorphism.
Further we have
\begin{align}\label{equalite rho omega}
\rho'(\omega(x))=\omega(Id\overset{2}{\otimes}\rho (x)),
\qquad x\in \G,
\end{align}
by (\ref{omega}).
It follows that for any $t\in G$, we have:
\[ \alpha(t)\rho'=\rho'\alpha(t).\]

\begin{theorem}\label{th application sunder}
There exists a (necessarily unique) automorphism $\tilde{\rho}:\G\rtimes_\alpha  G\to \G\rtimes_\alpha  G$ such that:
\begin{enumerate}
\item $\tilde{\rho}$ is trace preserving;
\item for all $x\in \G$, $\tilde{\rho}(\pi_\alpha(x))=\pi_\alpha(\rho'(x))$;
\item for all $t\in G$, $\tilde{\rho}(J(\lambda(t)))=J(\lambda(t))$.
\end{enumerate}
\end{theorem}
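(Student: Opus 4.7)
\bigskip

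\textbf{Proof plan for Theorem \ref{th application sunder}.}

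The idea is to implement $\tilde{\rho}$ spatially by a unitary on $L^2(G,H)$ that acts only on the $H$-fibre. Concretely, by functoriality of the Fock construction, the onto isometry $Id_{H_u}\overset{2}{\otimes}\rho$ of $\mathcal{H}$ induces a unitary $V:=F_{-1}(Id_{H_u}\overset{2}{\otimes}\rho)$ on $H=\mathcal{F}_{-1}(\mathcal{H})$. Since $\rho'(x)\Omega=F_{-1}(Id\overset{2}{\otimes}\rho)(x\Omega)=Vx\Omega$ for every $x\in\G$, one checks on the dense subspace $\G\Omega\subset H$ that $VxV^*=\rho'(x)$, so that $V$ implements $\rho'$. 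Then I set $W=Id_{L^2(G)}\overline{\otimes}V\in B(L^2(G,H))$ and define $\tilde{\rho}=\mathrm{Ad}(W)$.

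The verification of properties (2) and (3) is short. For (3), $W$ only acts on the $H$-factor while $J(\lambda(t))=\lambda(t)\overline{\otimes}I_H$ only acts on the $L^2(G)$-factor, so they commute and $WJ(\lambda(t))W^*=J(\lambda(t))$. For (2), a direct fibrewise computation gives
\[
[W\pi_\alpha(x)W^*\xi](t)=V\,\alpha(t)^{-1}(x)\,V^*\xi(t)=\rho'\!\left(\alpha(t)^{-1}(x)\right)\!\xi(t),
\]
and the identity $\rho'\alpha(t)=\alpha(t)\rho'$ (noted just before the theorem) turns this into $\alpha(t)^{-1}(\rho'(x))\xi(t)=[\pi_\alpha(\rho'(x))\xi](t)$. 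Because $\tilde{\rho}$ sends $\pi_\alpha(\G)$ onto itself (as $\rho'$ is bijective) and fixes $J(\lambda(G))$, and these two sets generate $M=\G\underset{\alpha}{\rtimes}G$ as a von Neumann algebra, $\tilde{\rho}$ restricts to a normal $*$-automorphism of $M$. Uniqueness with properties (2) and (3) follows from the same generation statement.

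The substantive point is (1), the trace preservation. Since $\tilde{\rho}$ is $w^*$-continuous, one has for every $F\in\KG$
\[
\tilde{\rho}(T^\alpha_F)=\int_G\pi_\alpha(\rho'(F(t)))J(\lambda(t))\,dt=T^\alpha_{\rho'\circ F},
\]
where the integral is interpreted in the same weak sense used to define $T^\alpha_F$. Lemma \ref{prop formule prod et sharp} then gives $\tilde{\rho}(T^\alpha_F)^*\tilde{\rho}(T^\alpha_F)=T^\alpha_{(\rho'F)^\sharp*(\rho'F)}$, and a direct computation using that $\rho'$ is a $*$-endomorphism commuting with every $\alpha(s)$ shows $(\rho'F)^\sharp*(\rho'F)=\rho'\circ(F^\sharp*F)$. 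Since $F^\sharp*F(e)\in\G$ and $T^\alpha_{F^\sharp*F}\geq0$ (Lemma \ref{prop B dense}(2)), Lemma \ref{prop B dense}(3) and the fact that $\tau\circ\rho'=\tau$ yield
\[
\tau_M\!\left(\tilde{\rho}(T^\alpha_F)^*\tilde{\rho}(T^\alpha_F)\right)=\tau\!\left(\rho'(F^\sharp*F(e))\right)=\tau(F^\sharp*F(e))=\tau_M((T^\alpha_F)^*T^\alpha_F).
\]
Applying Theorem \ref{lemme egalite de poids} to the two n.s.f.\ traces $\tau_M$ and $\tau_M\circ\tilde{\rho}$ and to the $w^*$-dense $*$-subalgebra $\mathcal{B}\subset M\cap L^1(M,\tau_M)$ (Lemma \ref{prop B dense}(1)), we conclude $\tau_M\circ\tilde{\rho}=\tau_M$.

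The main obstacle, in my view, is not any single step but the bookkeeping at Step~2 (that $V x V^* = \rho'(x)$) and the identity $(\rho'F)^\sharp*(\rho'F)=\rho'\circ(F^\sharp*F)$; both become straightforward once the commutation $\rho'\alpha=\alpha\rho'$ and the fact that $\rho'$ is a trace-preserving $*$-automorphism of $\G$ are in hand.
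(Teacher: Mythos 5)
Your proof is correct, and the substantive half of it --- the verification that $\tilde{\rho}$ is trace preserving --- coincides step for step with the paper's argument: both establish $\tilde{\rho}(T^\alpha_F)=T^\alpha_{\rho'\circ F}$ by a weak-$*$ computation against $\eta\in(\prodalpha)_*$, both prove $(\rho'\circ F)^\sharp*(\rho'\circ F)=\rho'\circ(F^\sharp*F)$ from the commutation $\rho'\alpha=\alpha\rho'$, and both conclude via Lemma \ref{prop B dense} and the Pedersen--Takesaki uniqueness statement (Theorem \ref{lemme egalite de poids}) applied on $\mathcal{B}$. Where you diverge is the existence and uniqueness of $\tilde{\rho}$ with properties (2) and (3): the paper simply invokes Theorem 4.4.4 of Sunder's book as a black box, whereas you construct $\tilde{\rho}$ explicitly as $\mathrm{Ad}(Id_{L^2(G)}\overline{\otimes}V)$ with $V=F_{-1}(Id\overset{2}{\otimes}\rho)$ the second quantization of the shift. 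This spatial implementation is sound (it is essentially what the cited theorem provides in this situation) and has the virtue of being self-contained and of making the fibrewise identity $W\pi_\alpha(x)W^*=\pi_\alpha(\rho'(x))$ transparent; the only point to state a little more carefully is the claim $VxV^*=\rho'(x)$, which is cleanest either by checking it on the generators $\omega(e)$ (where both sides equal $\omega((Id\overset{2}{\otimes}\rho)(e))$) or by using that $\Omega$ is separating for $\G$ --- agreement of the two operators at the single vector $\Omega$, which is all that the defining relation $\rho'(x)\Omega=F_{-1}(\cdot)(x\Omega)$ literally gives, is not by itself enough without that extra remark. Neither approach changes the mathematical content; the paper's route is shorter on the page, yours avoids an external citation.
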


\begin{proof}
We apply \cite[Theorem 4.4.4 p. 149]{Sunder}. 
It provides all the expected properties except the 
fact that $\tilde{\rho}$ is trace preserving. Let us check this.

Let $F\in \KG$. We observe that for all $t\in G$:
\begin{align*}
\rho'(F^\sharp*F(t))
&=\rho'\left(\int_G\alpha^{-1}(s)(F(s)^*)\alpha^{-1}(s)(F(ts))ds\right)\\
& =\int_G\alpha^{-1}(s)((\rho'(F(s)))^*)\alpha^{-1}(s)(\rho'(F(ts))ds.
\end{align*}
The map $\rho'$ is a automorphism and $\alpha \text{ and }\rho'$ commute, 
hence we have:
\begin{align*}
\rho'(F^\sharp*F(t))=(\rho'\circ F)^\sharp*(\rho'\circ F)(t)
\end{align*}
We have for all $\eta\in (\prodalpha)_*$:
\begin{align*}
\langle\tilde{\rho}((T_F^\alpha)^*T_F^\alpha),\eta\rangle&=\int_G \langle\tilde{\rho}(\pi_\alpha(F^\sharp*F)(t))\tilde{\rho}(J(\lambda(t)),\eta\rangle dt\\
&=\int_G\langle\pi_\alpha(\rho'(F^\sharp*F(t))J(\lambda(t)),\eta\rangle dt\\
&=\int_G\left\langle\pi_\alpha((\rho'\circ F)^\sharp*(\rho'\circ F)(t))J(\lambda(t)),\eta\right\rangle dt\\
&=\langle(T_{\rho'\circ F}^\alpha)^*T_{\rho'\circ F}^\alpha,\eta\rangle.
\end{align*}
Hence $\tilde{\rho}((T^\alpha_F)^*T^\alpha_F)=(T^\alpha_{\rho'\circ F})^*T^\alpha_{\rho'\circ F}$. Computing the trace, we have:
\begin{align*}
\tau_M(\tilde{\rho}((T_F^\alpha)^*T_F^\alpha)&)=\tau_M((T_{\rho'\circ F}^\alpha)^*T_{\rho'\circ F}^\alpha)\\
&=\tau((\rho'\circ F)^*(\rho'\circ F)(e))\\
&=\tau(\rho'(F^\sharp*F(e)))\\
&=\tau(F^\sharp*F)(e))\\
&=\tau_M((T^\alpha_F)^*T^\alpha_F).
\end{align*}
Using the $w^*$-density of $\mathcal{B}$ and 
Theorem \ref{lemme egalite de poids}, we obtain the equality $\tau_M\circ \tilde{\rho}=\tau_M$. 
\end{proof}

In the sequel we let $(\varepsilon_k)_{k\in\Z}$ 
denote the standard  basis of $l^2_\Z$.

\begin{lemme}
Let $d=\pi_\alpha(\omega(\dot{\delta_e}\otimes \varepsilon_0))
\in \KG$,
where $e$ is the unit of $G$. Then $d^*=d$ and $d^2=1$.
\end{lemme}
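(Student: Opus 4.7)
The plan is to exploit the fact that $\omega$ produces self-adjoint operators and that $\omega(e)^2 = \|e\|_H^2 \cdot Id$ (formula (\ref{eq équilité norme})), and then transfer these algebraic properties to $d$ via the $*$-homomorphism $\pi_\alpha$.

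First I would compute $\|\dot{\delta_e}\|_{H_u}^2$. By the very definition of the inner product on $H_u$ (obtained from the semi-norm associated to $\Theta$), this equals
\[
\Theta(\delta_e,\delta_e) = \sum_{(s,t)\in G\times G} u(t^{-1}s)\delta_e(t)\delta_e(s) = u(e^{-1}e) = u(e).
\]
Since $T_u$ is unital, Theorem \ref{DCH}(2) gives $u(e)=1$, so $\|\dot{\delta_e}\|_{H_u} = 1$. Because $\mathcal{H} = H_u\overset{2}{\otimes}\ell^2_\R(\Z)$ carries the Hilbertian tensor norm, this yields
\[
\|\dot{\delta_e}\otimes \varepsilon_0\|_{\mathcal{H}}^2 = \|\dot{\delta_e}\|_{H_u}^2\,\|\varepsilon_0\|^2 = 1.
\]

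Next I would set $e_0 := \dot{\delta_e}\otimes \varepsilon_0 \in \mathcal{H}$ and use the two structural facts about $\omega$. Since $\omega(e_0) = l(e_0)+l(e_0)^*$, the operator $\omega(e_0)$ is manifestly self-adjoint. Moreover, by (\ref{eq équilité norme}),
\[
\omega(e_0)^2 = \|e_0\|_{\mathcal{H}}^2\, Id_{\mathcal{F}_{-1}(\mathcal{H})} = Id_{\mathcal{F}_{-1}(\mathcal{H})}.
\]

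Finally, since $\pi_\alpha : \Gamma_{-1}(\mathcal{H}) \to \prodalpha$ is a unital $*$-homomorphism, it preserves adjoints and products. Hence
\[
d^* = \pi_\alpha\bigl(\omega(e_0)^*\bigr) = \pi_\alpha(\omega(e_0)) = d, \qquad d^2 = \pi_\alpha\bigl(\omega(e_0)^2\bigr) = \pi_\alpha(Id) = 1.
\]
No step is really an obstacle here; the only subtlety is making sure unitality of $T_u$ has been translated into $u(e)=1$ and that $\varepsilon_0$ has unit norm, both of which are immediate.
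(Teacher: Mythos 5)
Your proof is correct and follows essentially the same route as the paper: the author likewise notes that $d^*=d$ is immediate, computes $\|\dot{\delta_e}\otimes\varepsilon_0\|_{\mathcal H}=\|\dot{\delta_e}\|_{H_u}=u(e)^{1/2}=1$ from unitality, and concludes $d^2=1$ via the relation $\omega(e)^2=\|e\|^2\,Id$. Your write-up merely makes explicit the (routine) facts that $\omega(e_0)$ is self-adjoint and that $\pi_\alpha$ is a unital $*$-homomorphism, which the paper leaves implicit.
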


\begin{proof}
It is clear that $d^*=d$. Next observe that since $T_u$ is unital, we have $u(e)=1$ hence
\begin{align*}
\|\dot{\delta_e}\otimes \varepsilon_0\|_\h=\|\dot{\delta_e}\|_{H_u}=u(e)^\frac{1}{2}=1.
\end{align*}
Consequently, $d^2=1$ by (\ref{eq équilité norme}).
\end{proof}

We define
$$
U : \G\rtimes_\alpha  G\longrightarrow
\G\rtimes_\alpha  G,\qquad
U(x) =d^*\tilde{\rho}(x)d.
$$
It follows from above that this is trace preserving
automorphism.

\begin{lemme}\label{lemme ukj simplification}
For all $k\in \N_0$, for all $t\in G$, we have:
\begin{align*}
& U^k(J(\lambda(t)))=\\
&\pi_\alpha(\omega(\dot{\delta_e}\otimes \varepsilon_0)\omega(\dot{\delta_e}\otimes \varepsilon_1)\cdots\omega(\dot{\delta_e}\otimes \varepsilon_{k-1})\omega(\dot{\delta_t}\otimes \varepsilon_{k-1})\cdots\omega(\dot{\delta_t}\otimes \varepsilon_1)\omega(\dot{\delta_t}\otimes \varepsilon_0))J(\lambda(t)).
\end{align*}
\end{lemme}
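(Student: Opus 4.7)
\smallskip

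The plan is to prove the identity by induction on $k$, using the multiplicative properties of $U$ together with the three key ingredients: (a) $\tilde{\rho}$ is a $\star$-homomorphism fixing $J(\lambda(t))$ and satisfying $\tilde{\rho}(\pi_\alpha(x))=\pi_\alpha(\rho'(x))$; (b) the action of $\rho'$ on a generator, namely $\rho'(\omega(\dot{\delta_s}\otimes\varepsilon_j))=\omega(\dot{\delta_s}\otimes\rho(\varepsilon_j))$, which just shifts the $l^2_\Z$-index; (c) the commutation relation from Lemma \ref{prop commutationJ et pi}, which combined with \eqref{commutation} gives
\[
J(\lambda(t))\,\pi_\alpha(\omega(\dot{\delta_e}\otimes\varepsilon_j))
=\pi_\alpha(\omega(\dot{\delta_t}\otimes\varepsilon_j))\,J(\lambda(t)).
\]
Also I will use throughout that $d=d^*$, so $U(y)=d\,\tilde{\rho}(y)\,d$.

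For the base case $k=0$ the product over an empty set of factors equals $1$, so both sides reduce to $J(\lambda(t))$. For $k=1$, I compute
\[
U(J(\lambda(t)))=d\,\tilde{\rho}(J(\lambda(t)))\,d
=\pi_\alpha(\omega(\dot{\delta_e}\otimes\varepsilon_0))\,J(\lambda(t))\,\pi_\alpha(\omega(\dot{\delta_e}\otimes\varepsilon_0)),
\]
and pushing $J(\lambda(t))$ through the right-hand factor via (c) turns the second $\dot{\delta_e}$ into $\dot{\delta_t}$, giving exactly $\pi_\alpha(\omega(\dot{\delta_e}\otimes\varepsilon_0)\omega(\dot{\delta_t}\otimes\varepsilon_0))J(\lambda(t))$.

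For the inductive step, write the assumed identity as $U^k(J(\lambda(t)))=\pi_\alpha(A_k)\,J(\lambda(t))$, where $A_k$ is the word in $\omega(\dot{\delta_e}\otimes\varepsilon_j)$'s followed by $\omega(\dot{\delta_t}\otimes\varepsilon_j)$'s displayed in the statement. Applying $U$, using that $\tilde{\rho}$ is a $\star$-homomorphism and that it fixes $J(\lambda(t))$, I obtain
\[
U^{k+1}(J(\lambda(t)))=d\,\pi_\alpha(\rho'(A_k))\,J(\lambda(t))\,d.
\]
By (b), $\rho'(A_k)$ is obtained from $A_k$ by replacing each $\varepsilon_j$ with $\rho(\varepsilon_j)$, which simply shifts the range of indices to $\varepsilon_1,\ldots,\varepsilon_k$. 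It remains to bring the two copies of $d=\pi_\alpha(\omega(\dot{\delta_e}\otimes\varepsilon_0))$ inside: the left one merges into $\pi_\alpha$ by multiplicativity, while for the right one I use (c) to commute it past $J(\lambda(t))$, which produces $\pi_\alpha(\omega(\dot{\delta_t}\otimes\varepsilon_0))J(\lambda(t))$. Reassembling the word gives precisely $\pi_\alpha(A_{k+1})J(\lambda(t))$, closing the induction.

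No deep obstacle is expected; the only subtle point is the index bookkeeping to confirm that the shift produced by $\rho'$ together with the symmetric ``unfolding'' from the outer factors of $d$ yields the palindromic pattern $\varepsilon_0,\varepsilon_1,\ldots,\varepsilon_k,\varepsilon_k,\ldots,\varepsilon_0$ at step $k+1$. This is just checking that prepending $\varepsilon_0$ on the left (from the outer $d$), shifting the middle block $\varepsilon_0,\ldots,\varepsilon_{k-1}\mapsto \varepsilon_1,\ldots,\varepsilon_k$ (from $\rho'$), and appending $\varepsilon_0$ on the right after commuting through $J(\lambda(t))$ (which flips $\dot{\delta_e}$ to $\dot{\delta_t}$) reproduces $A_{k+1}$.
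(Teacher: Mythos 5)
Your proof is correct and follows essentially the same route as the paper: induction on $k$, using that $\tilde{\rho}$ is a $\star$-homomorphism fixing $J(\lambda(t))$ and acting as $\rho'$ on $\pi_\alpha(\,\cdot\,)$, that $\rho'$ shifts the $\ell^2_\Z$-indices of the generators $\omega(\dot{\delta_s}\otimes\varepsilon_j)$, and that $J(\lambda(t))$ commutes past $\pi_\alpha(\omega(\dot{\delta_e}\otimes\varepsilon_0))$ at the cost of replacing $\dot{\delta_e}$ by $\dot{\delta_t}$ via Lemma \ref{prop commutationJ et pi} and \eqref{commutation}. The index bookkeeping you describe matches the paper's computation exactly.
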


\begin{proof}
We prove the lemma by induction. The case $k=0$ is easy. Let $k\in N_0$
and assume that the result true for $k$. We remark that for all $t\in G$ and $x\in \G$:
\begin{align*}
U(\pi_\alpha(x)J(\lambda(t)))=\pi_\alpha(\omega(\dot{\delta_e}\otimes \varepsilon_0))\pi_\alpha(\rho'(x))J(\lambda(t))\pi_\alpha(\omega((\dot{\delta_e}\otimes \varepsilon_0))).
\end{align*}
Hence for any $t\in G$ we have:
\begin{align*}
&U^{k+1}(J(\lambda(t)))\\
&=U(\pi_\alpha(\omega(\dot{\delta_e}\otimes \varepsilon_0)\omega(\dot{\delta_e}\otimes \varepsilon_1)\cdots\omega(\dot{\delta_e}\otimes \varepsilon_{k-1})\omega(\dot{\delta_t}\otimes \varepsilon_{k-1})\cdots\omega(\dot{\delta_t}\otimes \varepsilon_1)\omega(\dot{\delta_t}\otimes \varepsilon_0))J(\lambda(t))\\
&=\pi_\alpha(\omega(\dot{\delta_e}\otimes \varepsilon_0))\pi_\alpha(\rho'(\omega(\dot{\delta_e}\otimes \varepsilon_0)\cdots\omega(\dot{\delta_t}\otimes \varepsilon_0)))J(\lambda(t))\pi_\alpha(\omega(\dot{\delta_e}\otimes \varepsilon_0)).
\end{align*}
By \eqref{equalite rho omega}, this is equal to
\begin{align*}
\pi_\alpha(\omega(\dot{\delta_e}\otimes \varepsilon_0)\omega(\dot{\delta_e}\otimes \varepsilon_1)\cdots\omega(\dot{\delta_e}\otimes \varepsilon_{k})\omega(\dot{\delta_t}\otimes \varepsilon_{k})\cdots\omega(\dot{\delta_t}\otimes \varepsilon_1)))J(\lambda(t))\pi_\alpha(\omega(\dot{\delta_e}\otimes \varepsilon_0)).
\end{align*}
By lemma \ref{prop commutationJ et pi}, the latter is equal to
\begin{align*}
&\pi_\alpha(\omega(\dot{\delta_e}\otimes \varepsilon_0)\omega(\dot{\delta_e}\otimes \varepsilon_1)\cdots\omega(\dot{\delta_e}\otimes \varepsilon_{k})\omega(\dot{\delta_t}\otimes \varepsilon_{k})\cdots\omega(\dot{\delta_t}\otimes \varepsilon_1)))\pi_\alpha(\omega(\dot{\delta_t}\otimes \varepsilon_0))J(\lambda(t))\\
&=\pi_\alpha(\omega(\dot{\delta_e}\otimes \varepsilon_0)\omega(\dot{\delta_e}\otimes \varepsilon_1)\cdots\omega(\dot{\delta_e}\otimes \varepsilon_{k})\omega(\dot{\delta_t}\otimes \varepsilon_{k})\cdots\omega(\dot{\delta_t}\otimes \varepsilon_1)\omega(\dot{\delta_t}\otimes \varepsilon_0))J(\lambda(t)).
\end{align*}
This proves the lemma.
\end{proof}

We conclude the proof of Theorem ({\bf B}) by showing:
\begin{align*}
\mathbb{E}U^kJ=T_u^k, \qquad k\geq 0.
\end{align*} 
It suffices to prove that for all $t\in G$, $\mathbb{E}U^kJ(\lambda(t))=T^k(\lambda(t))$. 
To check this, fix $t\in G$ and 
$k\in N_0$ and for all $i\in [|1;2k|]$,
consider $f_i=\left\lbrace\begin{array}{ccc}
\dot{\delta_e}\otimes \varepsilon_{i-1}&\text{ If }& 1\leq i\leq k\\
\dot{\delta_t}\otimes \varepsilon_{2k-i}&\text{ If }& k+1\leq i\leq 2k
\end{array}\right.$.

Then $\langle f_i,f_{2k-i+1}\rangle_{-1}=u(t)$ and
for all $1\leq i<j\leq 2k$ such that $j\neq 2k-i+1$, $\langle f_i,f_j\rangle_\h=0$.

We have according to  Lemma \ref{lemme ukj simplification} :
\begin{align*}
\mathbb{E}U^kJ(\lambda(t))&=\mathbb{E}(\pi_\alpha(\omega(\dot{\delta_e}\otimes \varepsilon_0)\omega(\dot{\delta_e}\otimes \varepsilon_1)\cdots\omega(\dot{\delta_t}\otimes \varepsilon_1)\omega(\dot{\delta_t}\otimes \varepsilon_0))J(\lambda(t))).
\end{align*}
Using Lemma \ref{prop esperance simplification}, this is equal to 
\begin{align*}
\ &\tau(\omega(\dot{\delta_e}\otimes \varepsilon_0)\omega(\dot{\delta_e}\otimes \varepsilon_1)\cdots\omega(\dot{\delta_e}\otimes \varepsilon_{k-1})\omega(\dot{\delta_t}\otimes \varepsilon_{k-1})\cdots\omega(\dot{\delta_t}\otimes \varepsilon_1)\omega(\dot{\delta_t}\otimes \varepsilon_0))\lambda(t)\\
&=\tau(\omega(f_1)\cdots\omega(f_{2k}))\lambda(t)=\langle f_1,f_{2k}\rangle_\h\cdots\langle f_k,f_{k+1}\rangle_\h \lambda(t).
\end{align*}
We derive, using lemma \ref{prop trace w w avec produit scalaire}, that
\begin{align*}
\mathbb{E}U^kJ(\lambda(t))=u(t)^k\lambda(t)=T_u^k(\lambda(t)),
\end{align*}
which concludes the proof.

By Lemma \ref{lem absdila implique dila p}, we have the following
explicit consequence of Theorem (\textbf{B}).

\begin{theorem}
Let $G$ be a unimodular locally compact group. 
Let $T: VN(G)\to VN(G)$ is a self-adjoint, unital, completely positive Fourier multiplier.
Then for all $1\leq p<\infty$, the map $T_p:L^p(VN(G))\to L^p(VN(G))$ is dilatable.
\end{theorem}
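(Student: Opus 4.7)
The plan is to deduce this theorem directly from the combination of Theorem (\textbf{B}) and Lemma \ref{lem absdila implique dila p}, both of which have just been established. No new construction is required: the proof is essentially a one-line invocation of these two results, but it is worth spelling out why the deduction is immediate.

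First, I would apply Theorem (\textbf{B}) to the Fourier multiplier $T_u$, obtaining a tracial von Neumann algebra $(N',\tau')$ (in this setting $N'=\G\underset{\alpha}{\rtimes} G$ with its natural tracial weight $\tau_M$), a trace preserving one-to-one $\star$-homomorphism $J\colon VN(G)\to N'$, and a trace preserving $\star$-automorphism $U\colon N'\to N'$ such that $T_u^k=\mathbb{E}U^kJ$ for every $k\geq 0$, where $\mathbb{E}$ is the conditional expectation associated with $J$.

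Next, I would invoke Lemma \ref{lem absdila implique dila p}, which says precisely that absolute dilatability implies dilatability at each level $1\leq p<\infty$. Concretely, Lemma \ref{lem superdilatation} ensures that the $\star$-homomorphism $J$ induces an isometry $J_p\colon L^p(VN(G),\omega_G)\to L^p(N',\tau_M)$, that the trace preserving conditional expectation $\mathbb{E}$ induces a contraction $Q_p\colon L^p(N',\tau_M)\to L^p(VN(G),\omega_G)$, and that the trace preserving $\star$-isomorphism $U$ induces an invertible isometry $U_p$ on $L^p(N',\tau_M)$. Passing the identity $T_u^k=\mathbb{E}U^kJ$ through the $L^p$-extensions (which agree on the $w^*$-dense subalgebra $VN(G)\cap L^1(VN(G))$ by uniqueness in Lemma \ref{lem superdilatation}) yields $T_p^k=Q_p U_p^k J_p$ for every $k\geq 0$, which is exactly the definition of dilatability of $T_p$.

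Since both ingredients are already proved in the paper, there is no genuine obstacle in this final step; the only thing to check carefully is the compatibility of the $L^p$-extensions with composition, but this is subsumed in Lemma \ref{lem absdila implique dila p} and the preceding discussion after Definition \ref{rmq superdilatation}. I would therefore keep the written proof to a single short paragraph that cites Theorem (\textbf{B}) to produce the absolute dilation and then Lemma \ref{lem absdila implique dila p} to transfer it to the $L^p$-level.
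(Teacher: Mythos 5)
Your proposal is correct and coincides with the paper's own argument: the theorem is stated there precisely as the "explicit consequence" of Theorem (\textbf{B}) combined with Lemma \ref{lem absdila implique dila p}, with no further work. The extra details you supply about how $J$, $\mathbb{E}$ and $U$ induce the maps $J_p$, $Q_p$ and $U_p$ are exactly the content of Lemma \ref{lem superdilatation} and the discussion following Definition \ref{rmq superdilatation}, so nothing is missing.
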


In a more general approach, with a locally compact group (so without the unimodular hypothesis), it is possible to obtain a similar result, but in the framework of a von Neumann algebra equipped with a weight. It will be written in \cite{thesecd}.

\section{Multi-variable dilatations}
This last section is devoted to  Theorems (\textbf{C}) and (\textbf{D}). The approach is  similar to the one in \cite{SKR}, where the discrete case was considered. Since many arguments are similar to the ones in Sections
4 and 5, we will not write all the details.

\subsection{Case of Schur multipliers}
The goal of this
section is to prove Theorem (\textbf{C}); then we
will state a consequence (Theorem \ref{Cons1}).
To make the presentation simpler,
we only consider the case $n=2$. The arguments for 
the general case are similar. So
we consider $\varphi_1,\varphi_2\in L^\infty(\Sigma^2)$ and we assume that $M_{\varphi_1}, M_{\varphi_2}$ are self-adjoint unital positive Schur multipliers on $B(L^2(\Sigma))$. 
We let $\mathbb{H}_1$, respectively $\mathbb{H}_2$, denote 
the real Hilbert space associated to
$\varphi_1$, respectively $\varphi_2$, as
at the beginning of Section 4. Then we consider
the tracial von Neumann algebras $(N_1,\tau_1):=(\Gamma_{-1}(\mathbb{H}_1),\tau_1)$ and $(N_2,\tau_2):=(\Gamma_{-1}(\mathbb{H}_2),\tau_2)$, and we
define 
$$
(N,\tau):=(N_1\overline{\otimes}N_2,
\tau_1\overline{\otimes}\tau_2). 
$$
Next we let 
$T_1\in B(L^1(\Sigma),N_1)$ and $T_2\in B(L^1(\Sigma),N_2)$, be
the unique linear maps such that 
\begin{align*}
T_1(h)= \omega_1(\dot{h})
\quad\text{and}\quad
T_2(h) =\omega_2(\dot{h}),\qquad h\in L^1_{\R}(\Sigma).
\end{align*}
For $i=1,2$, we let $F_i\in L^\infty_\sigma(\Sigma,N_i)$
be associated to $T_i$.
In addition we define $T_1^\otimes$ and $T_2^\otimes$ in $B(L^1(\Sigma),N)$ by
\begin{align*}
T_1^\otimes(h)= T_1(h)\otimes 1_{N_2} 
\quad\text{and}\quad
T_2^\otimes(h)= 1_{N_1}\otimes T_2(h),\qquad h\in L^1(\Sigma).
\end{align*}
As in Section 4, we consider the infinite tensor product
$N^\infty$, and we now let $T_1^\infty$
and $T_2^\infty$ in $B(L^1(\Sigma),N^\infty)$
be the natural extensions of $T_1^\otimes$ and $T_2^\otimes$,
respectively.
Finally we let $F_1^\infty$ and $F_2^\infty$ in 
$L^\infty_\sigma(\Sigma,N^\infty)$ be corresponding to
$T_1^\infty$ and $T_2^\infty$, respectively, in the identification $B(L^1(\Sigma),N^\infty)\simeq
L^\infty_\sigma(\Sigma,N^\infty)$. Likewise
we let $d_1$ and $d_2$ in 
$L^\infty(\Sigma)\overline{\otimes}N^\infty$ be corresponding to
$T_1^\infty$ and $T_2^\infty$, respectively.
It follows from Section 4 that $d_1,d_2$ are self-adjoint
symmetries. Moreover $d_1$ and $d_2$ commute, by construction.

We consider the shifts $\mathcal{S}_1, \mathcal{S}_2$ on $N^\infty$ such that 
\begin{align*}
\mathcal{S}_1(\cdots\otimes(x_0\otimes y_0)\otimes (x_1\otimes y_1)\otimes\cdots):=\cdots \otimes (x_{-1}\otimes y_0)\otimes (x_0\otimes y_1)\otimes\cdots
\end{align*}
and
\begin{align*}
\mathcal{S}_2(\cdots\otimes(x_0\otimes y_0)\otimes (x_1\otimes y_1)\otimes\cdots):=\cdots \otimes (x_0\otimes y_{-1})\otimes (x_1\otimes y_0)\otimes\cdots
\end{align*}

We set $\mathcal{M}:=B(L^2(\Sigma))\overline{\otimes}N^\infty$ and we let 
$J\colon B(L^2(\Sigma))\to \mathcal{M}$ be defined by 
$J(x)=x\otimes 1_{N^\infty}$. 
This is a trace preserving one-to-one $\ast$-homomorphism.
We let $\E : \mathcal{M}\to B(L^2(\Sigma))$ denote the conditional 
expectation associated with $J$.

For $i=1,2$, we define a trace preserving $*$-automorphism 
$U_i : {\mathcal M}\to {\mathcal M}$ by
\begin{align*}
U_i(y) =d_i((Id\otimes\mathcal{S}_i(y))d_i.
\end{align*}
Let us now prove that $U_1$ and $U_2$ commute. We already have $d_1d_2=d_2d_1$. 
We remark that $d_1\in B(L^2(\Sigma))\overline{\otimes}N_1^\infty\overline{\otimes}1$ and $d_2\in B(L^2(\Sigma))\overline{\otimes}1\overline{\otimes}N_2^\infty$.
On the other hand, We can see $Id\otimes \mathcal{S}_1$ (respectively $Id\otimes \mathcal{S}_2$) as being the same as 
$Id\otimes \mathcal{S}\overline{\otimes}Id$
(respectively $Id\otimes Id\overline{\otimes}\mathcal{S}$). Further 
we have $Id\otimes \mathcal{S}_1(d_2)=d_2$ and $Id\otimes \mathcal{S}_2(d_1)=d_1$. 
Hence for any $x\in\mathcal M$, we have
\begin{align*}
U_1U_2(x)&=d_1(Id\otimes\mathcal{S}_1(d_2(Id\otimes \mathcal{S}_2(x))d_2)d_1\\
&=d_1(Id\otimes\mathcal{S}_1(d_2)Id\otimes\mathcal{S}_1(Id\otimes \mathcal{S}_2(x))Id\otimes\mathcal{S}_1(d_2)d_1\\
&=d_1d_2Id\otimes\mathcal{S}_1(Id\otimes \mathcal{S}_2(x))d_2d_1\\
&=d_2d_1Id\otimes\mathcal{S}_2(Id\otimes \mathcal{S}_1(x))d_1d_2\\
&\downarrow \text{ same computation}\\
&=U_2U_1(x).
\end{align*}
This proves the commutation property.

We now apply Lemma \ref{prop existence gamma}
and we let
$$
\Gamma : L^2_\sigma(\Sigma^2, N^\infty)\longrightarrow B(L^2(\Sigma))\overline{\otimes}N^\infty
$$
be the resulting $w^*$-continuous contraction.
Applying the argument in the proof of
Lemma \ref{lemme d et gamma}, we have:

\begin{lemme}

\ 

\begin{enumerate}
\item
For all $\theta\in L^2(\Sigma^2)$ and for all $y=\cdots\otimes y_{-1}\otimes(1_{N_1}\otimes y_0^2)\otimes y_1\cdots\in N^\infty$,
\begin{align*}
d_1\Gamma(\theta\otimes y)d_1=\Gamma\left((s,t)\mapsto \theta(s,t)[\cdots\otimes y_{-1}\otimes( \tilde{F_1\times F_1}(s,t)\otimes y_0^2)\otimes y_1\otimes\cdots]\right)
\end{align*}
\item 
For all $\theta\in L^2(\Sigma)$ and for all $y=\cdots\otimes y_{-1}\otimes(y_0^1\otimes 1_{N_2})\otimes y_1\cdots\in N^\infty$:
\begin{align*}
d_2\Gamma(\theta\otimes y)d_2=\Gamma\left((s,t)\mapsto \theta(s,t)[\cdots\otimes y_{-1}\otimes( y_0^1\otimes\tilde{F_2\times F_2}(s,t))\otimes y_1\otimes\cdots]\right)
\end{align*}
\end{enumerate}
\end{lemme}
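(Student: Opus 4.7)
The plan is to transcribe the proof of Lemma \ref{lemme d et gamma} to the present bi-variate setting. The key structural observation is that under the natural identification $N^\infty \simeq N_1^\infty \overline{\otimes} N_2^\infty$, the symmetry $d_1$ lies in $B(L^2(\Sigma)) \overline{\otimes} N_1^\infty \overline{\otimes} 1_{N_2^\infty}$, so it interacts nontrivially only with the $N_1$-tower, and at position $0$ of that tower. Item (2) is proved by the symmetric argument with the roles of $N_1$ and $N_2$ exchanged; I treat only (1).

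First I would approximate $d_1$, in the $w^*$-topology, by nets of finite sums
\[
\phi = \sum_i \chi_{E_i}\otimes \bigl[\cdots\otimes 1_{N_1}\otimes m_i\otimes 1_{N_1}\otimes\cdots\bigr]\otimes 1_{N_2^\infty}\ \in\ \mathcal{E}\otimes N^\infty,
\]
with $m_i\in N_1$ placed at position $0$ of the $N_1$-tower, and similarly by a second approximant $\phi'$. A direct computation as in Lemma \ref{lemme d et gamma} gives
\[
\phi\,\Gamma(\theta\otimes y)\,\phi' \,=\, \Gamma\bigl((s,t)\mapsto \theta(s,t)\,\phi(s)\phi'(t)\,y\bigr),
\]
and since $\phi,\phi'$ are measurable, Remark \ref{rmq produit tilde simple} gives $\phi(s)\phi'(t)=\tilde{\phi\times\phi'}(s,t)$. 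Because of the tensor placement of $m_i,m_j'$ at position $0$ of the $N_1$-tower and the form $\cdots\otimes y_{-1}\otimes(1_{N_1}\otimes y_0^2)\otimes y_1\otimes\cdots$ of $y$, the product $\tilde{\phi\times\phi'}(s,t)\cdot y$ is nontrivial only at position $0$ of the $N_1$-tower, where it absorbs the $1_{N_1}$ of $y$.

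Next I would pass to the $w^*$-limit, first in $\phi'$ and then in $\phi$, invoking the $w^*$-continuity of $\Gamma$ from Lemma \ref{prop existence gamma} together with the $w^*$-continuity statements of Lemma \ref{prop fonctions prefaiblement continues} and Remark \ref{remarque prefaiblecontinuité}. This replaces $\phi,\phi'$ by $d_1$ on the left-hand side, and $\tilde{\phi\times\phi'}$ by $\tilde{F_1^\infty\times F_1^\infty}$ on the right-hand side. By uniqueness in the construction of $\tilde{\cdot\times\cdot}$, the position-$0$ $N_1$-component of $\tilde{F_1^\infty\times F_1^\infty}(s,t)$ is exactly $\tilde{F_1\times F_1}(s,t)$, with $1_{N_1}$ elsewhere in the $N_1$-tower and $1_{N_2^\infty}$ throughout the $N_2$-side, which is the announced formula.

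The main obstacle is the tensor-index bookkeeping: one must verify carefully that the $1_{N_1}$ at position $0$ of $y$ combines with the position-$0$ entries of the two copies of $d_1$ to produce exactly $\tilde{F_1\times F_1}(s,t)$ in that slot, while the $1_{N_2^\infty}$-factor of $d_1$ leaves $y_0^2$ and the entire $N_2$-tower untouched, and the $1_{N_1}$-entries of $d_1$ at non-zero positions leave the corresponding components of $y$ unchanged. Once this bookkeeping is unwound, all remaining steps are a verbatim rerun of the single-variable argument in Lemma \ref{lemme d et gamma}.
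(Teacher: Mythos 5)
Your proposal is correct and is essentially the paper's own argument: the paper gives no separate proof of this lemma, stating only that it follows by ``applying the argument in the proof of Lemma \ref{lemme d et gamma}'', which is exactly the transcription you carry out (approximation of $d_1$ by elements of ${\mathcal E}\otimes N^\infty$ supported at position $0$ of the $N_1$-tower, the direct computation of $\phi\Gamma(\theta\otimes y)\phi'$, and the double $w^*$-limit via Lemmas \ref{prop existence gamma} and \ref{prop fonctions prefaiblement continues} and Remark \ref{remarque prefaiblecontinuité}). Your tensor-index bookkeeping, identifying the position-$0$ slot as $\tilde{F_1\times F_1}(s,t)\otimes y_0^2$ while everything else is left untouched, is precisely the point the paper leaves implicit.
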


This is all that we need. Indeed,
using this lemma and computations similar to the ones 
in the last part of Section 4,
we obtain that for all integers 
$k,l\geq 0$ and for all $f,h\in L^2(\Sigma)$, we have
$$
{\mathbb E}U_1^k U_2^l J(S_{f\otimes h}) = 
M_{\varphi_1}^kM_{\varphi_2}^l
(S_{f\otimes h}).
$$
The full statement of Theorem (\textbf{C})
follows at once.

We finally mention that 
applying Lemma \ref{lem superdilatation}, we obtain the
following consequence.

\begin{theorem}\label{Cons1}
Let $\varphi_1,\dots,\varphi_n\in L^\infty(\Sigma^2)$ and assume each $M_{\varphi_i}$ is a self-adjoint, unital, positive Schur multiplier on $B(L^2(\Sigma))$. Then, 
for all $1\leq p<\infty$, there exist a tracial von Neumann algebra $(M,\tau)$, a
commuting $n$-tuple $(U_1,\dots,U_n)$ of surjective isometries  
on $L^p(M)$, and two contractions $J: S^p(L^2(\Sigma))\to L^p(M)$ and $Q:L^p(M)\to S^p(L^2(\Sigma))$ such that for all $k_1,\ldots, k_n$
in $\N_0$,
\begin{align*}
M_{\varphi_1}^{k_1}\cdots M_{\varphi_n}^{k_n}=
QU_1^{k_1}\cdots U_n^{k_n}J
\qquad \hbox{on}\ S^p(L^2(\Sigma)).
\end{align*}
\end{theorem}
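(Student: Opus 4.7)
The plan is to deduce Theorem \ref{Cons1} as a direct consequence of Theorem \textbf{(C)} together with Lemma \ref{lem superdilatation}. Starting from Theorem \textbf{(C)} we obtain a tracial von Neumann algebra $(\mathcal{M},\tau)$, a commuting family $(U_1,\dots,U_n)$ of trace preserving $\star$-isomorphisms on $\mathcal{M}$, a trace preserving one-to-one $\star$-homomorphism $J: B(L^2(\Sigma))\to\mathcal{M}$, and the associated conditional expectation $\mathbb{E}:\mathcal{M}\to B(L^2(\Sigma))$, satisfying
\[
M_{\varphi_1}^{k_1}\cdots M_{\varphi_n}^{k_n} \,=\,\mathbb{E}\, U_1^{k_1}\cdots U_n^{k_n}\, J,\qquad k_1,\dots,k_n\geq 0,
\]
where $B(L^2(\Sigma))$ is viewed as a tracial von Neumann algebra with its canonical trace $\mathrm{Tr}$, so that $L^p(B(L^2(\Sigma)),\mathrm{Tr})=S^p(L^2(\Sigma))$. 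Everything left to do is to pass this identity to the $L^p$-level.

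Fix $1\leq p<\infty$. The map $\mathbb{E}$ is a positive, unital, trace preserving contraction, since it is the conditional expectation associated with a trace preserving inclusion. Applying Lemma \ref{lem superdilatation} to $\mathbb{E}$, $J$ and each $U_i$ produces a contraction $\mathbb{E}_p:L^p(\mathcal{M})\to S^p(L^2(\Sigma))$, an isometry $J_p:S^p(L^2(\Sigma))\to L^p(\mathcal{M})$, and contractions $(U_i)_p:L^p(\mathcal{M})\to L^p(\mathcal{M})$; applying the same lemma to $U_i^{-1}$ shows that each $(U_i)_p$ is a surjective isometry. Setting $Q:=\mathbb{E}_p$ and relabelling $J_p$ and $(U_i)_p$ as $J$ and $U_i$, one has the required data on $L^p(\mathcal{M})$.

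It remains to check the commutation relations and the factorisation identity at the $L^p$-level. The identities $U_iU_j=U_jU_i$ hold on $\mathcal{M}$, hence on the dense subspace $\mathcal{M}\cap L^p(\mathcal{M})\subset L^p(\mathcal{M})$ where the extensions $(U_i)_p$ agree with the original $U_i$; by the density of this subspace and the boundedness of the extensions, the commutation transfers to $L^p(\mathcal{M})$. Similarly, for $x\in B(L^2(\Sigma))\cap S^p(L^2(\Sigma))$,
\[
\mathbb{E}_p (U_1)_p^{k_1}\cdots (U_n)_p^{k_n} J_p(x) \,=\, \mathbb{E}\, U_1^{k_1}\cdots U_n^{k_n}\, J(x) \,=\, M_{\varphi_1}^{k_1}\cdots M_{\varphi_n}^{k_n}(x),
\]
so the right-hand side, as a composition of $S^p$-bounded operators, extends uniquely to a bounded map on $S^p(L^2(\Sigma))$ (which then serves as the definition of $M_{\varphi_1}^{k_1}\cdots M_{\varphi_n}^{k_n}$ on $S^p$), and the identity of the theorem holds on all of $S^p(L^2(\Sigma))$.

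There is no real obstacle here: the statement is a direct corollary in the spirit of Lemma \ref{lem absdila implique dila p}. The only point that requires a little care is the interpretation of $M_{\varphi_1}^{k_1}\cdots M_{\varphi_n}^{k_n}$ on $S^p(L^2(\Sigma))$, which is resolved by observing that the factorisation through $S^p$-bounded operators on the right-hand side provides the canonical bounded extension.
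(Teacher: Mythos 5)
Your proposal is correct and follows essentially the same route as the paper, which obtains Theorem \ref{Cons1} precisely by applying Lemma \ref{lem superdilatation} to the data $(\mathcal{M},\tau)$, $J$, $(U_1,\dots,U_n)$ and $\mathbb{E}$ furnished by Theorem (\textbf{C}) and transferring the identities to the $L^p$-level by density. The extra care you take about the meaning of $M_{\varphi_1}^{k_1}\cdots M_{\varphi_n}^{k_n}$ on $S^p(L^2(\Sigma))$ is consistent with the paper's earlier treatment (boundedness on $S^p$ via the $S^1$ and $S^\infty$ bounds and interpolation, cf.\ Lemma \ref{lem absdila implique dila p}).
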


\subsection{Case of Fourier multipliers}
We now aim at proving Theorem (\textbf{D}). 
As in the case of Theorem (\textbf{C})
we only treat the case $n=2$. 
So we consider two 
self-adjoint, unital, completely positive Fourier multipliers
$T_{u_1},T_{u_2}$ on $VN(G)$. As we did at the beginning
of Section 5.2, we create real Hilbert spaces
$H_{u_1}$, $H_{u_2}$, $\mathcal{H}_1=H_{u_1}\overset{2}{\otimes}\ell_\R^2(\Z)$, $\mathcal{H}_2=H_{u_2}\overset{2}{\otimes}\ell_\R^2(\Z)$,
as well as the tracial von Neumann algebras
$(\Gamma_{-1}(\mathcal{H}_1),\tau_1) $ and $(\Gamma_{-1}(\mathcal{H}_2),\tau_2)$. Futher we
consider the resulting tracial  von Neumann algebra  
$$
\Gamma_{-1}(\mathcal{H}_1)
\overline{\otimes}\Gamma_{-1}(\mathcal{H}_2)\subset B(H).
$$
Next for any $t\in G$,
we consider, for $i=1,2$, the onto isometry
$\theta_t^{i}$ on $H_{u_i}$ given by
$$
\theta^i_t(\dot{f}) :  \dot{\overbrace{s\mapsto f(ts)}},
$$
and the trace preserving $*$-automorphism  
$$
\Gamma_{-1}(\theta^i_g\overset{2}{\otimes} Id_{\ell_\R^2(\Z)})
: \Gamma_{-1}(\mathcal{H}_i)\longrightarrow
\Gamma_{-1}(\mathcal{H}_i).
$$
Then we define 
$$
\alpha : G\longrightarrow 
Aut(\Gamma_{-1}(\mathcal{H}_1)
\overline{\otimes}\Gamma_{-1}(\mathcal{H}_2))
$$
by
\begin{align*}
\alpha(t) = \Gamma_{-1}(\theta^1_t\overset{2}{\otimes} Id_{\ell_\R^2(\Z)})\overline{\otimes}
\Gamma_{-1}(\theta^2_t\overset{2}{\otimes} Id_{\ell_\R^2(\Z)}),
\qquad t\in G.
\end{align*}
This is a  homomorphism, and this homomorphism
is point-$w^*$-continuous (in the sense of lemma \ref{continuité de alpha}),
by an argument similar to the one used in Section 5. This
gives rise 
to the
crossed product 
$$
M=\Gamma_{-1}(\mathcal{H}_1)\overline{\otimes}\Gamma_{-1}(\mathcal{H}_2)\rtimes_\alpha  G.
$$
As in Section 5, we see that the dual weight 
on this crossed product is a trace. 
We let 
$$
\pi_\alpha :\Gamma_{-1}(\mathcal{H}_1)\overline{\otimes}
\Gamma_{-1}(\mathcal{H}_2)\longrightarrow M
\qquad\hbox{and}\qquad
J : VN(G)\longrightarrow M
$$
be the canonical
embeddings. The latter is 
a trace preserving one-to-one $\ast$-homomorphism.
We let $\E : \mathcal{M}\to B(L^2(\Sigma))$ denote the conditional 
expectation associated with $J$.

Let $\rho$ be the shift operator on $l^2_\Z$, as in Section 5. 
Then we may define completely positive, unital, trace preserving $\ast$-automorphisms :
\begin{align*}
\rho_1':=\Gamma_{-1}(Id_{H_{u_1}}\otimes \rho)\overline{\otimes}Id
\qquad\hbox{and}\qquad
\rho_2':=Id\overline{\otimes}\Gamma_{-1}(Id_{H_{u_2}}\otimes \rho).
\end{align*}
It is plain that they commute. 
In addition, for all $t\in G$ and $i=1,2$, 
we have $\alpha(t)\rho_i'=\rho_i'\alpha(t)$. 
Now thanks to \cite[Theorem 4.4.4 p. 149]{Sunder}, we 
may obtain $\tilde{\rho}_1$ and $\tilde{\rho}_2$ as in 
Theorem \ref{th application sunder}. Then
we have, for all $x\in \mathcal{H}_1$ and $y\in \mathcal{H}_2$,
\begin{align*}
\rho_1'(\omega_1(x)\otimes y)= \omega _1(Id\overset{2}{\otimes}\rho(x))\otimes y
\quad\hbox{and}\quad
\rho_1'(\omega_1(x)\otimes y)=x\otimes \omega _2(Id\overset{2}{\otimes}\rho(y)).
\end{align*}
We now introduce the two self-adjoint
symmetries 
$$
d_1=\pi_\alpha(\omega_1(\dot{\delta}_e\otimes \epsilon_0)\otimes 1)
\qquad\hbox{and}\qquad
d_2=\pi_\alpha(1\otimes\omega_2(\dot{\delta}_e\otimes \epsilon_0)).
$$
Then for all $t\in G$, we have
\begin{align*}
\alpha(t)(\omega_1(\dot{\delta}_e\otimes \epsilon_0)\otimes 1)=\omega_1(\dot{\delta}_t\otimes \epsilon_0)\otimes 1
\quad\hbox{and}\quad
\alpha(t)(1\otimes\omega_2(\dot{\delta}_e\otimes \epsilon_0))=1\otimes\omega_2(\dot{\delta}_t\otimes \epsilon_0).
\end{align*}
Finally we define the  trace preserving $*$-automorphisms 
$U_1,U_2$ on $\Gamma_{-1}(\mathcal{H}_1)\overline{\otimes}\Gamma_{-1}(\mathcal{H}_2)\rtimes_\alpha  G$ by 
\begin{align*}
U_i:x\mapsto d_i^*\tilde{\rho}_i(x)d_i.
\end{align*}
We claim that $U_1$ and $U_2$ commute. 

To check this, we first remark that:
\begin{align*}
d_1d_2&=\pi_\alpha(\omega_1(\dot{\delta}_e\otimes \epsilon_0)\otimes 1)\pi_\alpha(1\otimes\omega_2(\dot{\delta}_e\otimes \epsilon_0))\\
&=\pi_\alpha(\omega_1(\dot{\delta}_e\otimes \epsilon_0)\otimes 1\times 1\otimes\omega_2(\dot{\delta}_e\otimes \epsilon_0))\\
&=\pi_\alpha( 1\otimes\omega_2(\dot{\delta}_e\otimes \epsilon_0)\times \omega_1(\dot{\delta}_e\otimes \epsilon_0)\otimes 1)\\
&=d_2d_1.
\end{align*}
Second, for all $x\in \Gamma_{-1}(\mathcal{H}_1)
\overline{\otimes}\Gamma_{-1}(\mathcal{H}_2)$, we have
\begin{align*}
\tilde{\rho}_1\tilde{\rho}_2(\pi_\alpha(x))=\pi_\alpha(\rho_1'\rho_2'(x))=\pi_\alpha(\rho_2'\rho_1'(x))=\tilde{\rho}_2\tilde{\rho}_1(\pi_\alpha(x)),
\end{align*}
and for all $t\in G$, we have
\begin{align*}
\tilde{\rho}_1\tilde{\rho}_2(J(\lambda(t)))=J(\lambda(t))=\tilde{\rho}_2\tilde{\rho}_1(J(\lambda(t))).
\end{align*}
We deduce that $\tilde{\rho}_1$ and $\tilde{\rho}_2$ commuting. 
For all $x\in \Gamma_{-1}(\mathcal{H}_1)\overline{\otimes}
\Gamma_{-1}(\mathcal{H}_2)\rtimes_\alpha  G$, we obtain:
\begin{align*}
U_1(U_2(x))&=U_1(d_2\tilde{\rho}_2(x)d_2)\\
&=d_1\tilde{\rho}_1(d_2\tilde{\rho}_2(x)d_2)d_1\\
&=d_1\tilde{\rho}_1(d_2)\tilde{\rho}_1(\tilde{\rho}_2(x))\tilde{\rho}_1(d_2)d_1\\
&=d_1\tilde{\rho}_1(\pi_\alpha(1\otimes\omega_2(\dot{\delta}_e\otimes \epsilon_0)))\tilde{\rho}_2(\tilde{\rho}_1(x))\tilde{\rho}_1(\pi_\alpha(1\otimes\omega_2(\dot{\delta}_e\otimes \epsilon_0)))d_1\\
&=d_1\pi_\alpha(\rho'_1(1\otimes\omega_2(\dot{\delta}_e\otimes \epsilon_0)))\tilde{\rho}_2(\tilde{\rho}_1(x))\pi_\alpha(\rho'(1\otimes\omega_2(\dot{\delta}_e\otimes \epsilon_0)))d_1\\
&=d_1\pi_\alpha(1\otimes\omega_2(\dot{\delta}_e\otimes \epsilon_0))\tilde{\rho}_2(\tilde{\rho}_1(x))\pi_\alpha(1\otimes\omega_2(\dot{\delta}_e\otimes \epsilon_0))d_1\\
&=d_1d_2\tilde{\rho}_2(\tilde{\rho}_1(x))d_2d_1\\
&=d_2d_1\tilde{\rho}_2(\tilde{\rho}_1(x))d_1d_2\\
&\vdots \text{ same computation}\\
&=U_2(U_1(x))
\end{align*}
With this commutation property in hands, 
the proof of 
Theorem (\textbf{D}) is obtained by arguments similar to the ones 
in Section 5.

As in the previous subsection (Schur multipliers)
we mention that 
applying Lemma \ref{lem superdilatation}, we obtain the
following consequence of Theorem (\textbf{D}).

\begin{theorem}
Let $G$ be a unimodular locally compact group and 
let $T_{u_1},\ldots, T_{u_n}$ be self-adjoint, unital, completely positive Fourier multipliers on $VN(G)$. Then for all $1\leq p<\infty$,
there exist a tracial von Neumann algebra $(M,\tau)$,
a commuting $n$-tuple $(U_1,\dots,U_n)$ of surjective isometries  on $L^p(M)$,
and two contractions $J: L^p(VN(G))\to L^p(M)$ and 
$Q:L^p(M)\to L^p(VN(G))$ such that for all $k_1,\ldots,k_n
\in{\mathbb N}_0$,
\begin{align*}
T_{u_1}^{k_1}\cdots T_{u_n}^{k_n}=QU_1^{k_1}\cdots U_n^{k_n}J
\qquad on \ L^p(VN(G)).
\end{align*}
\end{theorem}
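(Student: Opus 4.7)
The plan is to deduce this statement directly from Theorem (\textbf{D}) by using the functorial passage from the von Neumann algebra level to the $L^p$ level provided by Lemma \ref{lem superdilatation}, exactly in the spirit of Lemma \ref{lem absdila implique dila p} but for $n$ commuting operators instead of a single one.

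First, I would apply Theorem (\textbf{D}) to the $n$-tuple $T_{u_1},\ldots,T_{u_n}$. This produces a tracial von Neumann algebra $(\mathcal{M},\tau)$, a trace preserving one-to-one $\star$-homomorphism $J\colon VN(G)\to \mathcal{M}$ and a commuting $n$-tuple $(U_1,\ldots,U_n)$ of trace preserving $\star$-isomorphisms of $\mathcal{M}$ such that
\[
T_{u_1}^{k_1}\cdots T_{u_n}^{k_n}=\mathbb{E}\,U_1^{k_1}\cdots U_n^{k_n}\,J,\qquad k_1,\ldots,k_n\in\mathbb{N}_0,
\]
where $\mathbb{E}\colon\mathcal{M}\to VN(G)$ is the conditional expectation associated with $J$. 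Next, I would apply Lemma \ref{lem superdilatation} to each of the trace preserving positive unital maps involved. This gives, for every $1\leq p<\infty$, an isometry $J_p\colon L^p(VN(G))\to L^p(\mathcal{M})$ (using the second assertion of Lemma \ref{lem superdilatation} since $J$ is an injective $\star$-homomorphism), a contraction $Q_p:=\mathbb{E}_p\colon L^p(\mathcal{M})\to L^p(VN(G))$, and for each $i$ a contraction $(U_i)_p\colon L^p(\mathcal{M})\to L^p(\mathcal{M})$. Taking the same construction for $U_i^{-1}$ and invoking the uniqueness part of Lemma \ref{lem superdilatation}, each $(U_i)_p$ is an invertible isometry.

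The only point requiring a quick verification is the commutation of $(U_1)_p,\ldots,(U_n)_p$: since $U_iU_j=U_jU_i$ on $\mathcal{M}$, both $(U_i)_p(U_j)_p$ and $(U_j)_p(U_i)_p$ are bounded extensions of the same map $U_iU_j|_{\mathcal{M}\cap L^p(\mathcal{M})}$ to $L^p(\mathcal{M})$, so by the uniqueness clause in Lemma \ref{lem superdilatation} they coincide. Likewise, the identity $T_{u_1}^{k_1}\cdots T_{u_n}^{k_n}=Q_p (U_1)_p^{k_1}\cdots (U_n)_p^{k_n} J_p$ holds on the dense subspace $VN(G)\cap L^p(VN(G))$ by the corresponding identity at the von Neumann algebra level, hence on all of $L^p(VN(G))$ by continuity. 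There is no genuine obstacle in this argument; the only mild subtlety is ensuring that $T_{u_i}$ itself extends to a contraction on $L^p(VN(G))$, and this is automatic because $T_{u_i}$ is unital, completely positive and (being absolutely dilatable by Theorem (\textbf{B})) trace preserving, so Lemma \ref{lem superdilatation} applies to it as well.
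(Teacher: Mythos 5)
Your proof is correct and is exactly the route the paper takes: the paper derives this statement as an immediate consequence of Theorem (\textbf{D}) by applying Lemma \ref{lem superdilatation} to $J$, $\mathbb{E}$ and the $U_i$ (and their inverses), just as in Lemma \ref{lem absdila implique dila p}. Your additional verifications (commutativity of the induced $(U_i)_p$ via the uniqueness clause, and the trace preservation of each $T_{u_i}$ needed to define it on $L^p$) are the right points to check and are handled correctly.
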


\vskip 1cm

\begin{large}
Acknowledgments:\end{large} I would like to thank Christian Le Merdy, my thesis supervisor for all his support and his help.

The LmB receives support from
the EIPHI Graduate School (contract ANR-17-EURE-0002)

\vskip 1cm
\bibliographystyle{abbrv}
\bibliography{bib}

\begin{thebibliography}{10}

\bibitem{Akcoglu}
M.~A. Akcoglu and L.~Sucheston.
\newblock Dilations of positive contractions on {$L_{p}$} spaces.
\newblock {\em Canad. Math. Bull.}, 20(3):285--292, 1977.

\bibitem{A1}
C.~Arhancet.
\newblock On {M}atsaev's conjecture for contractions on noncommutative
  {$L^p$}-spaces.
\newblock {\em J. Operator Theory}, 69(2):387--421, 2013.

\bibitem{A2}
C.~Arhancet.
\newblock Dilations of {M}arkovian semigroups of {F}ourier multipliers on
  locally compact groups.
\newblock {\em Proc. Amer. Math. Soc.}, 148(6):2551--2563, 2020.

\bibitem{A3}
C.~Arhancet.
\newblock Dilatation of {M}arkovian semigroups of measurable {S}chur
  multipliers.
\newblock {\em https://arxiv.org/abs/1910.14434}, 2021.

\bibitem{bozejko}
M.~Bo\.{z}ejko.
\newblock Remark on {H}erz-{S}chur multipliers on free groups.
\newblock {\em Math. Ann.}, 258(1):11--15, 1981/82.

\bibitem{Reffermions1}
M.~Bo\.{z}ejko and R.~Speicher.
\newblock An example of a generalized {B}rownian motion.
\newblock {\em Comm. Math. Phys.}, 137(3):519--531, 1991.

\bibitem{Reffermions2}
M.~Bo\.{z}ejko and R.~Speicher.
\newblock Completely positive maps on {C}oxeter groups, deformed commutation
  relations, and operator spaces.
\newblock {\em Math. Ann.}, 300(1):97--120, 1994.

\bibitem{Buka2}
A.~V. Buhvalov.
\newblock The analytic representation of operators with an abstract norm
  [{R}ussian].
\newblock {\em Izv. Vys\v{s}. U\v{c}ebn. Zaved. Matematika}, 11(162):21--32,
  1975.

\bibitem{Buka3}
A.~V. Buhvalov.
\newblock Hardy spaces of vector-valued functions [{R}ussian].
\newblock {\em Zap. Nau\v{c}n. Sem. Leningrad. Otdel. Mat. Inst. Steklov.
  (LOMI)}, 65:5--16, 203, 1976.

\bibitem{Haagerup}
J.~De~Canni\`ere and U.~Haagerup.
\newblock Multipliers of the {F}ourier algebras of some simple {L}ie groups and
  their discrete subgroups.
\newblock {\em Amer. J. Math.}, 107(2):455--500, 1985.

\bibitem{Diesel}
J.~Diestel and J.~J. Uhl, Jr.
\newblock {\em Vector measures}.
\newblock Mathematical Surveys, No. 15. American Mathematical Society,
  Providence, R.I., 1977.
\newblock With a foreword by B. J. Pettis.

\bibitem{thesecd}
C.~Duquet.
\newblock Phd thesis.
\newblock {\em Work in progress}, 2023.

\bibitem{Effros}
E.~G. Effros and M.~Popa.
\newblock Feynman diagrams and {W}ick products associated with {$q$}-{F}ock
  space.
\newblock {\em Proc. Natl. Acad. Sci. USA}, 100(15):8629--8633, 2003.

\bibitem{Eymard}
P.~Eymard.
\newblock L'alg\`ebre de {F}ourier d'un groupe localement compact.
\newblock {\em Bull. Soc. Math. France}, 92:181--236, 1964.

\bibitem{Folland}
G.~B. Folland.
\newblock {\em A course in abstract harmonic analysis}.
\newblock Studies in Advanced Mathematics. CRC Press, Boca Raton, FL, 1995.

\bibitem{Hensen}
W.~Hensgen.
\newblock On the dual space of {${\bf H}^p(X),\;1<p<\infty$}.
\newblock {\em J. Funct. Anal.}, 92(2):348--371, 1990.

\bibitem{JLM}
M.~Junge and C.~Le~Merdy.
\newblock Dilations and rigid factorisations on noncommutative {$L^p$}-spaces.
\newblock {\em J. Funct. Anal.}, 249(1):220--252, 2007.

\bibitem{JX}
M.~Junge and Q.~Xu.
\newblock Noncommutative maximal ergodic theorems.
\newblock {\em J. Amer. Math. Soc.}, 20(2):385--439, 2007.

\bibitem{Lafforgue}
V.~Lafforgue and M.~De~la Salle.
\newblock Noncommutative {$L^p$}-spaces without the completely bounded
  approximation property.
\newblock {\em Duke Math. J.}, 160(1):71--116, 2011.

\bibitem{Paulsen}
V.~Paulsen.
\newblock {\em Completely bounded maps and operator algebras}, volume~78 of
  {\em Cambridge Studies in Advanced Mathematics}.
\newblock Cambridge University Press, Cambridge, 2002.

\bibitem{Pisier}
G.~Pisier.
\newblock {\em Similarity problems and completely bounded maps}, volume 1618 of
  {\em Lecture Notes in Mathematics}.
\newblock Springer-Verlag, Berlin, expanded edition, 2001.
\newblock Includes the solution to ``The Halmos problem''.

\bibitem{SKR}
S.~K. Ray.
\newblock On multivariate {M}atsaev's conjecture.
\newblock {\em Complex Anal. Oper. Theory}, 14(4):Paper No. 42, 25, 2020.

\bibitem{Ryan}
R.~A. Ryan.
\newblock {\em Introduction to tensor products of {B}anach spaces}.
\newblock Springer Monographs in Mathematics. Springer-Verlag London, Ltd.,
  London, 2002.

\bibitem{Sakai}
S.~Sakai.
\newblock {\em {$C\sp*$}-algebras and {$W\sp*$}-algebras}.
\newblock Springer-Verlag, New York-Heidelberg, 1971.

\bibitem{Spronk}
N.~Spronk.
\newblock Measurable {S}chur multipliers and completely bounded multipliers of
  the {F}ourier algebras.
\newblock {\em Proc. London Math. Soc. (3)}, 89(1):161--192, 2004.

\bibitem{Stratila}
S.~Str\u{a}til\u{a}.
\newblock {\em Modular theory in operator algebras}.
\newblock Editura Academiei Republicii Socialiste Rom\^{a}nia, Bucharest;
  Abacus Press, Tunbridge Wells, 1981.
\newblock Translated from the Romanian by the author.

\bibitem{Sunder}
V.~S. Sunder.
\newblock {\em An invitation to von {N}eumann algebras}.
\newblock Universitext. Springer-Verlag, New York, 1987.

\bibitem{Takesaki}
M.~Takesaki.
\newblock {\em Theory of operator algebras. {II}}, volume 125 of {\em
  Encyclopaedia of Mathematical Sciences}.
\newblock Springer-Verlag, Berlin, 2003.
\newblock Operator Algebras and Non-commutative Geometry, 6.

\bibitem{Takesaki3}
M.~Takesaki.
\newblock {\em Theory of operator algebras. {III}}, volume 127 of {\em
  Encyclopaedia of Mathematical Sciences}.
\newblock Springer-Verlag, Berlin, 2003.
\newblock Operator Algebras and Non-commutative Geometry, 8.

\end{thebibliography}

\end{document}